\documentclass[11pt, a4paper,reqno]{amsart}

\usepackage[utf8]{inputenc}
\usepackage{amsmath}
\usepackage[dvipsnames]{xcolor}
\usepackage{amssymb,esint}
\usepackage{amsthm}
\usepackage{dsfont}
\usepackage[shortlabels]{enumitem}
\usepackage[british]{babel}
\usepackage{hyperref}
\usepackage[]{mdframed}
\usepackage{mathrsfs}
\usepackage{mathtools}

\calclayout
\hypersetup{colorlinks=true,linkcolor=blue,citecolor=blue,urlcolor=blue}
\theoremstyle{definition}
\newtheorem{theorem}{Theorem}[section]
\newtheorem{prop}[theorem]{Proposition}
\newtheorem{definition}[theorem]{Definition}
\newtheorem{remark}[theorem]{Remark}
\newtheorem{lemma}[theorem]{Lemma}
\newtheorem{corollary}[theorem]{Corollary}
\numberwithin{equation}{section}

\newcommand{\abs}[1]{\left|#1\right|}
\newcommand{\norm}[1]{\left\|#1\right\|}
\newcommand{\R}{\mathbb R}
\newcommand{\N}{\mathbb N}
\newcommand{\A}{\mathcal A}
\newcommand{\dd}{\, \mathrm d}
\newcommand{\ddd}{\mathrm d}
\newcommand{\id}{\mathrm{Id}}
\newcommand{\cF}{\mathcal F}
\newcommand{\D}{\mathcal D}
\newcommand{\m}{\mathbf m}
\newcommand{\homd}{\mathsf{Q}}
\newcommand{\cc}{\mathfrak c}
\newcommand{\cW}{\mathcal W}
\newcommand{\E}{\mathcal E}
\newcommand{\frG}{\mathfrak G}
\DeclareMathOperator{\supp}{supp}
\DeclareMathOperator{\PV}{p.\!v.}
\newcommand{\one}{\mathds{1}}
\renewcommand{\L}{\operatorname{L}} 
\newcommand{\C}{\operatorname{C}} 
\renewcommand{\H}{\operatorname{H}} 
\DeclareRobustCommand{\Wdot}{\dot{\operatorname{W}}\protect{\vphantom{W}}}
\DeclareRobustCommand{\Bdot}{\dot{\operatorname{B}}\protect{\vphantom{B}}}
\DeclareRobustCommand{\Fdot}{\dot{\operatorname{F}}\protect{\vphantom{F}}}
\newcommand{\Dv}{\mathfrak D_v^{\sigma}}
\newcommand{\Dfr}{\mathfrak D^{\sigma}}
\newcommand{\Dvs}{\mathfrak D_v^{\sigma,*}}
\newcommand{\Dfrs}{\mathfrak D^{\sigma,*}}
\newcommand{\comp}{\subset\!\subset}

\begin{document}
\allowdisplaybreaks
\date{\today}

\title[On the kinetic $p$-Laplace equation with nonlocal diffusion]{On the kinetic $p$-Laplace equation\\ with nonlocal diffusion}

\author{Lukas Niebel}
\address[Lukas Niebel]{Institut f\"ur Analysis und Numerik, Universit\"at M\"unster\\
    Orl\'eans-Ring 10, 48149 M\"unster, Germany.}
\email{lukas.niebel@uni-muenster.de}
\date{\today}

\subjclass[2020]{35H10, 35R09, 35K92 (Primary) 35Q49, 46E35, 35A30, 35A23 (Secondary)}


\keywords{kinetic $p$-Laplace equation with nonlocal diffusion, Gagliardo-type nonlocal operator, kinetic Gagliardo--Nirenberg inequality, gain of integrability, critical kinetic trajectories}

\begin{abstract}
    \emergencystretch=1em
    We study two nonlocal versions of the kinetic $p$-Laplace equation: a Gagliardo-type model defined through
    differences and a Bessel-type model defined via Fourier multiplication. Using critical kinetic
    trajectories, we derive representation formulas adapted to the kinetic transport-diffusion geometry and
    establish homogeneous and scale-invariant kinetic Gagliardo--Nirenberg inequalities for nonlocal diffusion,
    which yield gain-of-integrability estimates for weak solutions to the kinetic $p$-Laplace equations
    with nonlocal diffusion.
\end{abstract}

\maketitle
\enlargethispage{3pt}

\section{Introduction}
Let $d\in\N$, $0<\sigma<1$, and $p \in (1,\infty)$. In this paper we study two natural formulations of the
kinetic $p$-Laplace equation with nonlocal diffusion. We begin with the Gagliardo-type model
\begin{equation}
    \label{eq:kinetic-frac-p}
    (\partial_t+v\cdot\nabla_x)f + (-\Delta_v)_p^{\sigma} f =0
    \qquad\text{in }\R^{1+2d},
\end{equation}
where
\begin{equation}
    \label{eq:def-frac-plap-intro}
    [(-\Delta_v)_p^{\sigma} f](t,x,v)\!
    := \PV\int_{\R^d} \!
    \frac{\abs{f(t,x,v)-f(t,x,w)}^{p-2}\bigl(f(t,x,v)-f(t,x,w)\bigr)}{\abs{v-w}^{d+\sigma p}}
    \dd w.\!\!
\end{equation}
The natural energy space for weak solutions is the fractional Gagliardo--Sobolev space. We refer to
\cite{MR2944369} for further details. This equation is invariant under the scaling
\[
    f_\lambda(t,x,v)= f(\lambda^{\sigma p}t, \lambda^{\sigma p+1}x,\lambda v).
\]
For $h\in\R^d$ we set
\[
    [\delta_h f](t,x,v):=f(t,x,v+h)-f(t,x,v),
    \qquad
    \Dv f(t,x,v,h):=\frac{\delta_h f(t,x,v)}{\abs{h}^{\sigma}},
\]
so that, in the weak divergence-form sense, the nonlocal $p$-Laplacian is represented by
\[
    (-\Delta_v)_p^{\sigma}f
    =
    -\frac12 \Dvs\Bigl(\abs{\Dv f}^{p-2}\Dv f\Bigr)
\]
for suitably smooth functions.
Here $\Dvs$ denotes the adjoint nonlocal divergence in the velocity variable.
Consequently, the weak divergence-form version of \eqref{eq:kinetic-frac-p} is
\begin{equation}
    \label{eq:linear-structure-intro}
    (\partial_t+v\cdot\nabla_x)f=\Dvs S,
    \qquad
    S:=\frac12 \abs{\Dv f}^{p-2}\Dv f.
\end{equation}
When the principal-value integral in \eqref{eq:def-frac-plap-intro} exists
pointwise, this weak formulation agrees with the pointwise one.

For the Gagliardo model, our first goal is to derive, by averaging along critical kinetic trajectories, a
representation formula for functions satisfying \eqref{eq:linear-structure-intro}. The resulting control is
expressed only in terms of the source term $S$ and a forcing term of order $\Dv f$. Our second goal is to
convert this representation formula into a global gain-of-integrability estimate.

Compared with kinetic equations with local diffusion, the main new difficulty is that the forcing term along
trajectories is no longer controlled by $\nabla_v f$. Instead, we exploit cancellation in the
velocity-increment variable to recover the pointwise density of the fractional seminorm. The source term is
treated through a nonlocal integration-by-parts argument, which produces a kernel with the expected fractional
tail in the increment variable.

In the present paper, we establish estimates for sufficiently smooth functions and do not address the density
argument needed to obtain embeddings for the corresponding kinetic Sobolev spaces. We refer to
\cite{auscher2026kineticsobolevspaces,MR4807233,dietert2025criticaltrajectorieskineticgeometry} for further
discussion.

Our first main result is the following kinetic Gagliardo--Nirenberg inequality for Gagliardo-type nonlocal diffusion. We write $\dd
    \eta(h):=\frac{\dd h}{\abs{h}^d}$.
\begin{theorem}
    \label{thm:int:mainGN-differencemp}
    Let $0<\sigma<1$ and
    \[
        p\in \Bigl(2-\frac{2\sigma}{2d+d\sigma+2\sigma},\,2+\frac{2}{d}\Bigr).
    \]
    Define
    \begin{equation*}
        q:=\frac{2p(d\sigma+d+\sigma)}{d(p\sigma+2)}.
    \end{equation*}
    There exists $C = C(d,\sigma,p)>0$ such that the following holds.

    Assume that $f\in \L^p(\R^{1+2d})$ and $S\in \L^{\frac{p}{p-1}}(\R^{1+2d}\times\R^d,\ddd \eta\otimes\ddd (t,x,v))$ are sufficiently smooth, with
    \[
        \Dv f\in \L^p(\R^{1+2d}\times\R^d,\ddd \eta\otimes\ddd (t,x,v)),
    \]
    and that
    \[
        (\partial_t+v\cdot\nabla_x)f=\Dvs S
    \]
    in the sense of distributions on $\R^{1+2d}$. Then
    \begin{equation*}
        \|f\|_{\vphantom{\L^{\frac{p}{p-1}}(\ddd \eta\otimes\ddd (t,x,v))}\L^q(\R^{1+2d})}
        \le
        C\,
        \|\Dv f\|_{\vphantom{\L^{\frac{p}{p-1}}(\ddd \eta\otimes\ddd (t,x,v))}
                \L^p(\ddd \eta\otimes\ddd (t,x,v))}^{\frac{d\sigma+2d+\sigma}{2(d\sigma+d + \sigma)}}
        \|S\|_{\L^{\frac{p}{p-1}}(\ddd \eta\otimes\ddd (t,x,v))}^{\frac{\sigma(d+1)}{2(d\sigma+d+\sigma)}}.
    \end{equation*}
\end{theorem}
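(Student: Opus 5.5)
The plan has two stages. First, a pointwise representation formula for $f$ along critical kinetic trajectories, with one piece controlled by $\Dv f$ and one by $S$. Second, a Hedberg-type optimization of the averaging scale $r$, followed by a weak-type/interpolation argument to reach $\L^q$.

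\textbf{Representation formula.} Fix a base point $z_0=(t_0,x_0,v_0)$ and a scale $r>0$. Use the kinetic scaling adapted to the $\L^p$--$\L^{p'}$ structure. Time is rescaled as $\tau=r^{\theta}$ for a suitable exponent $\theta$, to be fixed at the end by matching the homogeneities of the two terms. Space scales like $\tau r$ and velocity like $r$. Let $\gamma_s$, $s\in[0,\tau]$, be a critical trajectory: a curve in $(t,x,v)$ that starts at $z_0$, solves $\dot x=v$, $\dot t=1$, and has velocity control $\dot v$ chosen so that the endpoint is distributed over a kinetic cylinder $Q_r$. Average the trajectories against a smooth bump $\varphi$ of the initial velocity offset. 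This gives
\[
f(z_0)=\fint_{Q_r} f\,\dd z-\int_0^\tau\!\!\int \bigl[(\partial_t+v\cdot\nabla_x)f\bigr](\gamma_s)\,\varphi\,\dd\cdot\,\dd s-\int_0^\tau\!\!\int \dot v_s\cdot\nabla_v f(\gamma_s)\,\varphi .
\]
\begin{enumerate}[(i)]
\item \emph{Velocity-derivative term.} This term involves $\nabla_v f$, which is not controlled. Integrate it by parts in the averaging parameter so that the derivative lands on the kernel. Then exploit cancellation: since $\int\nabla\varphi=0$, $f(\gamma_s)$ can be replaced by the increment $\delta_h f$, with $h$ ranging over the velocity spread. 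This rewrites the term as $\int K_1(z_0;z,h)\,\Dv f(z,h)\,\dd\eta(h)\,\dd z$. The kernel satisfies $|K_1|\lesssim r^{\sigma}$ times the normalized indicator of $Q_r$, with $|h|\lesssim r$.
\item \emph{Transport term.} Replace $(\partial_t+v\cdot\nabla_x)f$ by $\Dvs S$ and move $\Dv$ onto the kernel by duality. This is the nonlocal integration by parts announced in the introduction. It produces $\int K_2\,S\,\dd\eta\,\dd z$ with $K_2=\Dv(\text{trajectory density})$. The kernel $K_2$ is bounded by $r^{-\sigma}\tau$ times the normalized density for $|h|\lesssim r$, and has an $|h|^{-\sigma}$ tail for $|h|\gtrsim r$. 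The tail is integrable against $\dd\eta$ at infinity because $\sigma>0$.
\end{enumerate}

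\textbf{Maximal-function bound and optimization.} Hölder's inequality in $(z,h)$ bounds the two kernel terms by
\[
r^{\sigma} M_p(z_0)+\tau r^{-\sigma}M_{p'}(z_0),
\]
where $M_p$ and $M_{p'}$ are kinetic maximal functions of $\bigl(\int|\Dv f|^p\dd\eta\bigr)^{1/p}$ and $\bigl(\int|S|^{p'}\dd\eta\bigr)^{1/p'}$. The $h$-tail is absorbed into a maximal function over dyadic annuli. The mean term is bounded by $|Q_r|^{-1/p}\|f\|_{\L^p}$. I expect $\|f\|_{\L^p}$ not to be needed in the end: homogeneity can be restored by sending the cylinder to infinity, or by using a vanishing-mean representation with two scales. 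In that version the estimate involves only the two kernel terms, with $\tau$ and $r$ coupled through the kinetic scaling. Optimizing $r$ pointwise, Hedberg style, gives
\[
|f(z_0)|\lesssim M_p(z_0)^{a}M_{p'}(z_0)^{1-a},
\]
with $a$ and $1-a$ the exponents in the statement. Then Hölder's inequality and the $\L^p$ and $\L^{p'}$ boundedness of the kinetic maximal function on the homogeneous space $(\R^{1+2d},\text{kinetic cylinders})$ yield the $\L^q$ bound. Here $q$ is determined by $a/p+(1-a)/p'=1/q$, and the time-scaling exponent must be chosen so that the two exponents produced by the optimization agree. The admissible range of $p$ should be exactly the condition that makes this choice consistent and $q$ finite and above the maximal-function thresholds. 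If the pointwise Hedberg step fails because the mean term cannot be dropped, I would instead prove a weak-type estimate via level sets and then use Marcinkiewicz-type interpolation.

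\textbf{Main obstacle.} The delicate step is the integration by parts in the source term. One must show that $\Dv$ applied to the density of trajectory endpoints (a Gaussian-like kernel in the velocity variable, transported) produces a kernel whose $\L^{p}(\dd\eta)$ norm scales correctly, uniformly in the base point. This is where the fractional tail appears, and where the range restrictions on $p$ (in particular the lower bound $2-\tfrac{2\sigma}{2d+d\sigma+2\sigma}$) enter. I would first compute $\Dv$ of a smooth bump explicitly: it is bounded by $\min(|h|^{1-\sigma}r^{-1},|h|^{-\sigma})$ times the bump. I would then check integrability of its $p$th power against $\dd\eta$, together with the $s$-integral, under those conditions.
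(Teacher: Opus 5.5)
\textbf{Assessment.} Your representation step is essentially right, once the averaging over all $1+2d$ trajectory parameters is made precise as the paper does with $\m=(m_0,m_1,m_2)$. Your treatment of the velocity term is a valid and simpler alternative to the paper's fractional antidivergence $\mathcal R_{\sigma,w}$: you integrate by parts, then use $\int\nabla_w\cdot\Theta_r\dd w=0$ to subtract a velocity average at scale $r^{\beta-\alpha}$, which turns $f$ into $\delta_h f$. This gives a compactly supported kernel with the same factor $r^{\sigma(\beta-\alpha)-1}$.

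\textbf{The gap: the Hedberg interpolation cannot work.} There are two reasons.
\begin{enumerate}[(i)]
\item In your bound $r^{\sigma}M_p+\tau r^{-\sigma}M_{p'}$ with $\tau=r^\theta$, the source bound is summable over small scales only if $\theta>\sigma$. This is the paper's condition $\alpha-\sigma(\beta-\alpha)>0$, and it is where the lower bound on $p$ enters. The $\dd\eta$-integrability of $\Dv$ of a bump, by contrast, holds for every $1<p<\infty$. Hence both powers of $r$ are positive. The only decreasing term is the mean term $\abs{Q_r}^{-1/p}\|f\|_{\L^p}$, so no choice of $r$ balances $M_p$ against $M_{p'}$.
\item Decisively, no bound $\abs{f}\lesssim M_p^{a}M_{p'}^{1-a}$ can hold. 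With Hölder and the maximal theorem it would give $\|f\|_{\L^{\tilde q}}\lesssim\|\Dv f\|_{\L^p(\dd\eta\otimes\dd(t,x,v))}^{a}\|S\|_{\L^{p'}(\dd\eta\otimes\dd(t,x,v))}^{1-a}$ with $1/\tilde q=a/p+(1-a)/p'$. For the stated $a=\frac{d\sigma+2d+\sigma}{2(d\sigma+d+\sigma)}$ one computes $1/\tilde q=1/q+\frac{\sigma}{2(d\sigma+d+\sigma)}$; for example $d=1$, $\sigma=\frac12$, $p=2$ gives $\tilde q=2$ but $q=\frac83$. Now take $f_\nu(t,x,v)=f(\nu t,\nu x,v)$ and $S_\nu=\nu S(\nu t,\nu x,v,h)$, which still satisfy $(\partial_t+v\cdot\nabla_x)f_\nu=\Dvs S_\nu$. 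The inequality then becomes $\|f\|_{\L^{\tilde q}}\lesssim\nu^{1-a}\|\Dv f\|^{a}\|S\|^{1-a}$ for all $\nu>0$, which forces $f=0$.
\end{enumerate}

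\textbf{What is missing: the paper's two-stage mechanism.} Your integrated kernels are not $r^{\sigma}$ times a normalised indicator. They are fractional-integral kernels in the homogeneous dimension $\homd$: of order $\sigma(\beta-\alpha)$ acting on $\Dv f$, and of order $\alpha-\sigma(\beta-\alpha)$ acting on $S$.
\begin{enumerate}[(i)]
\item The paper bounds these kernels in weak $\L^\theta$ uniformly in $\tau$ (Lemma~\ref{lem:critical-integration-interpolation}). It then applies weak Young, i.e.\ a Hardy--Littlewood--Sobolev bound, to each term separately. The ratio $\rho=\beta/\alpha$ is fixed so that $\frac1q=\frac1p-\frac{\sigma(\rho-1)}{(2\rho-1)d+1}$ and $\frac1q=\frac1{p'}-\frac{1-\sigma(\rho-1)}{(2\rho-1)d+1}$ give the same $q$.
\item The mean term is removed in norm, not pointwise: $\|T_{K_\tau}f\|_{\L^q}\lesssim\tau^{\homd(1/q-1/p)}\|f\|_{\L^p}\to0$ as $\tau\to\infty$, because $q>p$.
\item This yields the \emph{additive} estimate $\|f\|_{\L^q}\lesssim\|\Dv f\|_{\L^p(\dd\eta\otimes\dd(t,x,v))}+\|S\|_{\L^{p'}(\dd\eta\otimes\dd(t,x,v))}$. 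Hedberg's trick could prove each HLS bound term by term, with large scales controlled by global norms, but it cannot interpolate between the two terms.
\item The multiplicative form comes from applying the additive estimate to $(f_\nu,S_\nu)$ and optimising in $\nu$. The balancing parameter is this anisotropic $(t,x)$-rescaling, not the trajectory scale. Your fallback via level sets and Marcinkiewicz interpolation does not supply it.
\end{enumerate}

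\textbf{A smaller point.} The large-$\abs{h}$ tail of $\Dv$ of the trajectory density needs the substitution $w'=w+h$. This produces the shifted functional $\mathcal N^\#_{p'}S$, with $\|\mathcal N^\#_{p'}S\|_{\L^{p'}}=\|S\|_{\L^{p'}(\dd\eta\otimes\dd(t,x,v))}$. Your ``maximal function over dyadic annuli'' leaves this point unresolved.
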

\bigskip
The estimate of this theorem is homogeneous and scale invariant. In particular,
the assumption $f \in \L^p$ is qualitative only.
\smallskip
As a direct consequence of Theorem \ref{thm:int:mainGN-differencemp}, weak solutions to \eqref{eq:kinetic-frac-p} gain integrability in all variables
$(t,x,v)$ above the energy level.

\begin{corollary}    \label{cor:gagliardo-p-laplace}
    Let $\sigma,p$ be as in Theorem \ref{thm:int:mainGN-differencemp} and $f\in \L^p(\R^{1+2d})$
    be sufficiently smooth with
    $\Dv f\in \L^p(\R^{1+2d}\times\R^d,\ddd \eta\otimes\ddd (t,x,v))$, and suppose that $f$ is a weak solution to \eqref{eq:linear-structure-intro}.
    Then $f\in \L^q(\R^{1+2d})$ with $q$ as in Theorem \ref{thm:int:mainGN-differencemp}, and
    \[
        \|f\|_{\L^q(\R^{1+2d})}
        \lesssim
        \|\Dv f\|_{\L^p(\ddd \eta\otimes\ddd (t,x,v))}^{\Gamma},
        \qquad
        \Gamma:=\frac{p\sigma(d+1)+2d}{2(d\sigma+d+\sigma)}.
    \]
\end{corollary}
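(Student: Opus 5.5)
The corollary follows directly from Theorem \ref{thm:int:mainGN-differencemp} with the choice $S:=\frac12\abs{\Dv f}^{p-2}\Dv f$. By assumption $f$ is a weak solution of \eqref{eq:linear-structure-intro}, which means precisely that $(\partial_t+v\cdot\nabla_x)f=\Dvs S$ in the sense of distributions on $\R^{1+2d}$. The hypotheses $f\in\L^p$ and $\Dv f\in\L^p(\ddd\eta\otimes\ddd(t,x,v))$ are assumed verbatim, and smoothness of $S$ is inherited from that of $f$, at least to the extent required in the theorem. The one remaining hypothesis is that $S$ lies in $\L^{\frac{p}{p-1}}(\ddd\eta\otimes\ddd(t,x,v))$.

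This is a pointwise computation:
\[
\abs{S}^{\frac{p}{p-1}}=2^{-\frac{p}{p-1}}\abs{\Dv f}^{(p-1)\frac{p}{p-1}}=2^{-\frac{p}{p-1}}\abs{\Dv f}^{p}.
\]
Integrating against $\ddd\eta\otimes\ddd(t,x,v)$ gives
\[
\|S\|_{\L^{\frac{p}{p-1}}}=\tfrac12\,\|\Dv f\|_{\L^p}^{p-1}<\infty .
\]

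Inserting this into the estimate of Theorem \ref{thm:int:mainGN-differencemp} yields
\[
\|f\|_{\L^q}\le C\,2^{-\frac{\sigma(d+1)}{2(d\sigma+d+\sigma)}}\,\|\Dv f\|_{\L^p}^{\frac{d\sigma+2d+\sigma}{2(d\sigma+d+\sigma)}+\frac{(p-1)\sigma(d+1)}{2(d\sigma+d+\sigma)}} .
\]
The exponent is
\[
\frac{d\sigma+2d+\sigma+(p-1)\sigma(d+1)}{2(d\sigma+d+\sigma)}=\frac{p\sigma(d+1)+2d}{2(d\sigma+d+\sigma)}=\Gamma ,
\]
since $d\sigma+\sigma-\sigma(d+1)=0$. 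This is the claimed bound, and in particular $f\in\L^q(\R^{1+2d})$.

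There is no real obstacle here. The only points needing care are the bookkeeping of the exponent above and the sense in which $S$ is "sufficiently smooth". If the theorem requires more regularity of $S$ than $\abs{\Dv f}^{p-2}\Dv f$ provides when $p<2$, I would first apply the theorem to a regularisation and pass to the limit. I expect the paper to treat smoothness qualitatively, as it states it does not address density arguments, so the direct substitution suffices.
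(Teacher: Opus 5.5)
Your proposal is correct and is exactly the argument the paper intends: it gives no separate proof and presents the corollary as a direct consequence of Theorem~\ref{thm:int:mainGN-differencemp} with $S=\frac12\abs{\Dv f}^{p-2}\Dv f$, so that $\|S\|_{\L^{p'}(\ddd\eta\otimes\ddd(t,x,v))}=\frac12\|\Dv f\|_{\L^p(\ddd\eta\otimes\ddd(t,x,v))}^{p-1}$ and the exponents combine to $\Gamma$. Your exponent bookkeeping checks out, and treating the smoothness of $S$ as a qualitative assumption matches the paper's stated convention.
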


\bigskip

We next consider a second kinetic $p$-Laplace equation with nonlocal diffusion, adapted to the setting of homogeneous Bessel
potential spaces. In this case, the nonlocal derivative in the velocity variable is defined by the Fourier
multiplier
\[
    \widehat{D_v^\sigma g}(\xi)=\abs{\xi}^\sigma \widehat g(\xi),
    \qquad \xi\in\R^d.
\]
The corresponding inverse, the Riesz potential $I_v^\sigma=(-\Delta_v)^{-\sigma/2}$, is defined on mean-zero
Schwartz functions by
\[
    \widehat{I_v^\sigma h}(\xi)=\abs{\xi}^{-\sigma}\widehat h(\xi).
\]
The associated kinetic $p$-Laplace equation is
\begin{equation}
    \label{eq:kinetic-nonlocal-p}
    (\partial_t+v\cdot\nabla_x)f+D_v^\sigma\bigl(\abs{D_v^\sigma f}^{p-2}D_v^\sigma f\bigr)=0.
\end{equation}
In general, the Gagliardo and Bessel formulations are not equivalent. They do, however, share the same scaling
invariance
\[
    f_\lambda(t,x,v)= f(\lambda^{\sigma p}t, \lambda^{\sigma p+1}x,\lambda v).
\]
For this model we prove an analogous representation formula for functions $f$ satisfying
\begin{equation}
    \label{eq:kinetic-nonlocal-source}
    (\partial_t+v\cdot\nabla_x)f=D_v^\sigma S.
\end{equation}
The resulting estimate is controlled by $D_v^\sigma f$ and the source term $S$,
and is our second main result.

\begin{theorem}
    \label{thm:int:mainGN}
    Let $0<\sigma<1$ and let
    \[
        p\in \Bigl(2-\frac{2\sigma}{2d+d\sigma+2\sigma},\,2+\frac{2}{d}\Bigr),
        \qquad q:=\frac{2p(d\sigma+d+\sigma)}{d(p\sigma+2)}.
    \]
    There exists $C=C(d,p,\sigma)>0$ such that the following holds.
    If $f\in \L^p(\R^{1+2d})$ is sufficiently smooth, with $D_v^\sigma f\in \L^p(\R^{1+2d})$, and satisfies
    \[
        (\partial_t+v\cdot\nabla_x)f=D_v^\sigma S
    \]
    in the sense of distributions for some sufficiently regular $S\in \L^{\frac{p}{p-1}}(\R^{1+2d})$, then
    \[
        \|f\|_{\vphantom{\L^{\frac{p}{p-1}}}\L^{q}(\R^{1+2d})}
        \le C
        \|D_v^\sigma f\|_{\vphantom{\L^{\frac{p}{p-1}}}\L^p(\R^{1+2d})}
            ^{\frac{d\sigma+2d+\sigma}{2(d\sigma+d+\sigma)}}
        \|S\|_{\L^{\frac{p}{p-1}}(\R^{1+2d})}^{\frac{\sigma(d+1)}{2(d\sigma+d+\sigma)}}.
    \]
\end{theorem}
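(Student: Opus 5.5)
We prove Theorem \ref{thm:int:mainGN} through a representation formula along critical kinetic trajectories, in the same way as for the Gagliardo model. The formula is then turned into the Gagliardo--Nirenberg bound by a splitting argument followed by optimisation in a free scale parameter. Fix a point $z_0=(t_0,x_0,v_0)$ and a scale $R>0$. For $s\in[0,R^{\sigma p}]$ we use kinetic trajectories
\[
    \gamma(s)=\bigl(t_0-s,\;x_0-sv_0+\textstyle\int_0^s (s-r)\dot v(r)\dd r,\;v(s)\bigr),
\]
where the velocity path $v(s)=v_0+s^{1/(\sigma p)}\,\omega$ is parametrised by an auxiliary variable $\omega\in B_1\subset\R^d$. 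We then average $f(z_0)-f(\gamma(R^{\sigma p}))$ over $\omega$ against a smooth compactly supported weight $\varphi$. The exponent $1/(\sigma p)$ is the critical one, dictated by the scaling $f_\lambda$. Writing $f(z_0)-f(\gamma(R^{\sigma p}))=-\int_0^{R^{\sigma p}}\frac{\ddd}{\ddd s}f(\gamma(s))\dd s$ and using $\frac{\ddd}{\ddd s}f(\gamma)=-(\partial_t+v\cdot\nabla_x)f+\dot v\cdot\nabla_v f$, the formula splits into two pieces. The first is a transport piece, which by the equation equals $D_v^\sigma S$. The second is a velocity-forcing piece $\dot v\cdot\nabla_v f$.

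\textbf{Step 1: the forcing piece.} We write $\nabla_v f=\nabla_v I_v^\sigma D_v^\sigma f$. Then $\nabla_v I_v^\sigma$ is an operator of order $1-\sigma$, which we move onto the $\omega$-average. Since $\dot v\cdot\nabla_v$ acts on the averaged quantity as a derivative $s^{-1/(\sigma p)}\nabla_\omega$, integration by parts in $\omega$ turns the forcing piece into $\int D_v^\sigma f(\gamma)\,K_1\dd\omega\dd s$. Here $K_1=s^{1/(\sigma p)-1}(\nabla_\omega\cdot)\,$composed with the fractional operator applied to $\varphi$. The kernel is not compactly supported, but it decays like $|\omega|^{-d-1+\sigma}$ at infinity, which is integrable. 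The same device handles the transport piece: we move $D_v^\sigma$ onto the weight, which gives $\int S(\gamma)\,K_2\dd\omega\dd s$ with $K_2\sim s^{-1/p}\,(D^\sigma_\omega\varphi)$ after rescaling. The upshot is the pointwise bound
\[
    |f(z_0)|\lesssim |\text{avg}_R f| + \mathcal K_1^R(|D_v^\sigma f|)(z_0)+\mathcal K_2^R(|S|)(z_0).
\]
Here $\mathcal K_i^R$ are positive kinetic fractional-integral operators supported on (or decaying away from) cylinders of size $R$. Alternatively, we may send $R\to\infty$, which kills the average term since $f\in\L^p$, at the cost of kernels on the whole trajectory cone.

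\textbf{Step 2: bounding the kernels.} We bound $\mathcal K_i$ via kinetic Hardy--Littlewood--Sobolev inequalities: the kernels are homogeneous of degrees $\sigma$ and $\sigma(p-1)$ relative to kinetic homogeneous dimension $Q=\sigma p+d(\sigma p+1)+d$ (appropriately normalised). Splitting at a scale $R$ gives $\mathcal K_1\le R^{a}\,M(D_v^\sigma f)+\dots$ and similarly for $S$ with a different power $R^{-b}$. Optimising $R$ pointwise (Hedberg's trick), as functions of $M D_v^\sigma f$ and $M S$, yields a product bound $|f|\lesssim (M|D_v^\sigma f|)^{\theta}(M|S|)^{1-\theta}$ with the exponents of the theorem. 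Hölder in $\L^q$ combined with maximal-function boundedness on $\L^p$ and $\L^{p/(p-1)}$ then gives the claim.

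The exponent bookkeeping, and in particular the check that the Hölder exponents match $q$, is where the restriction on $p$ enters. The lower bound on $p$ guarantees $p/(p-1)$-integrability is compatible, and the upper bound $p<2+2/d$ ensures $q$ exceeds $p$ appropriately. The main obstacle is Step 1. The nonlocal operators $D_v^\sigma$ and $\nabla_v I^\sigma_v$ do not commute with composition along $\gamma$: the velocity variable enters $x$ through $\int(s-r)\dot v$. We would handle this by exploiting that, at fixed $s$, the map $\omega\mapsto\gamma(s)$ is affine in $v$ with $x$ shifted linearly. For fixed $s$ we therefore change variables to $w=v(s)$, write the $x$-dependence as a shear, and apply the multiplier in $w$ while controlling the commutator with the shear. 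If this commutator is not controllable, we fall back to Littlewood--Paley pieces in $v$, with each dyadic piece tied to its matching time scale.
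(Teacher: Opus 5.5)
There is a genuine gap at the foundation of Step~1: your family of trajectories is degenerate.

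\textbf{Step 1.} For fixed $s$ you vary only $\omega\in B_1\subset\R^d$. In relative kinetic coordinates $z_0^{-1}\circ\gamma(s)=(-s,y,w)$ one finds $w=s^{1/(\sigma p)}\omega$, and $y$ is a fixed multiple of $s\,w$. (Incidentally, the $x$-component must carry $-\int_0^s(s-r)\dot v(r)\dd r$ for $\gamma$ to be kinetic.) Hence all your averages live on the $(1+d)$-dimensional set $\{(-s,c\,s\,w,w)\}\subset\R^{1+2d}$. This has two consequences.
\begin{itemize}
\item The operators $\mathcal K_i^R$ are averages over a codimension-$d$ submanifold, not kinetic convolutions with locally integrable kernels. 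So no kinetic Hardy--Littlewood--Sobolev, Young or maximal-function bound applies.
\item $\mathrm{avg}_R f$ is an average over flat $d$-dimensional planes, which admits no $\L^p\to\L^q$ bound for $q>p$. So $f\in\L^p$ does not let you discard it as $R\to\infty$.
\end{itemize}
The same degeneracy creates the ``commutator with the shear'' you identify as the main obstacle: since $y$ is slaved to $w$, moving $\omega$ moves $x$ and $v$ together.

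The paper removes both problems by averaging over a full $(1+2d)$-parameter family $\m=(m_0,m_1,m_2)$. It uses a time offset $m_0r^\alpha$ and two independent forcings $r^\beta\sin\log r$, $r^\beta\cos\log r$ in each velocity direction. By (M2), the map $\m\mapsto(s,y,w)=(t,x,v)^{-1}\circ\gamma^{\m}(r;(t,x,v))$ is then a diffeomorphism with Jacobian $\abs{\cc}r^{\homd}$, so trajectory averages become genuine kinetic convolutions $T_{K_r}$. Because $(s,y,w)$ are independent and $(t,x,v)\circ(s,y,w)=(t+s,x+y+sv,v+w)$, varying $w$ at fixed $(s,y)$ is a pure velocity translation. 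Hence $D_v^\sigma$ and $I_v^\sigma$ move onto the kernel in $w$ exactly, by self-adjointness and translation invariance, with no commutator. The only price is that $G_{r,\sigma}$ and $G^v_{r,\sigma}$ acquire the tails $(1+\abs{w}/r^{\beta-\alpha})^{-d-\sigma}$ and $(1+\abs{w}/r^{\beta-\alpha})^{-d-1+\sigma}$.

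\textbf{Step 2.} This step fails independently of Step~1. A pointwise bound $\abs{f}\lesssim(M\abs{D_v^\sigma f})^{\theta}(M\abs{S})^{1-\theta}$ cannot hold. Under $f\mapsto f(\lambda^{\sigma p}t,\lambda^{\sigma p+1}x,\lambda v)$ one has $D_v^\sigma f\mapsto\lambda^{\sigma}(D_v^\sigma f)(\cdots)$ and $S\mapsto\lambda^{\sigma(p-1)}S(\cdots)$. So the right-hand side gains a factor $\lambda^{\sigma\theta+\sigma(p-1)(1-\theta)}$, and letting $\lambda\to0$ forces $f=0$. Consistently, H\"older with the theorem's $\theta$ gives $\theta/p+(1-\theta)/p'=1/q+\frac{\sigma}{2(d\sigma+d+\sigma)}\neq 1/q$.

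More fundamentally, even with a non-degenerate family, the natural scaling $\abs{v-v_0}\sim s^{1/(\sigma p)}$ (the paper's $\alpha=\sigma p$, $\beta=\sigma p+1$) does not close. It puts the forcing piece in $\L^{q_1}$ with $1/q_1=1/p-\sigma/Q$, and the source piece in $\L^{q_2}$ with $q_2=q_1/(p-1)$. These agree only for $p=2$.

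The paper deliberately decouples the trajectory scaling from that of the $p$-Laplace equation. It takes $\beta/\alpha=\rho=\frac{d(p-2)+2p\sigma+2}{2(d(p-2)+p\sigma)}$, which equals $1+1/(\sigma p)$ only for $p=2$, chosen so that both weak Young relations give the same $q$. The endpoints of the $p$-range are exactly $\rho>1$ and $\sigma(\rho-1)<1$. The latter is positivity of the $\L^1$-exponent $\alpha-\sigma(\beta-\alpha)$ of the source kernel. The paper then:
\begin{itemize}
\item integrates the single-scale $\L^1$/$\L^\infty$ kernel bounds in $r$ by the real-interpolation lemma, giving weak-$\L^\theta$ kernels;
\item applies weak Young to get the additive bound $\norm{f-T_{K_\tau}f}_{\L^q}\lesssim\norm{D_v^\sigma f}_{\L^p}+\norm{S}_{\L^{p'}}$;
\item sends $\tau\to\infty$, using $q>p$;
\item only then obtains the product via the anisotropic scaling $f(\nu t,\nu x,v)$, $\nu S(\nu t,\nu x,v)$. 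This scaling preserves $(\partial_t+v\cdot\nabla_x)f=D_v^\sigma S$ and scales the two norms with opposite powers of $\nu$.
\end{itemize}
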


\bigskip

This estimate is homogeneous and scale invariant. In particular, the assumption $f \in \L^p$ is qualitative
only.

\begin{remark}
    \label{rem:gagliardo-not-from-bessel}
    It is natural to ask whether Theorem~\ref{thm:int:mainGN-differencemp}
    can be deduced from Theorem~\ref{thm:int:mainGN}. This seems to be true only in
    the Hilbertian setting $p=2$.

    Indeed, suppose that $f$ satisfies the assumptions of
    Theorem~\ref{thm:int:mainGN-differencemp}, and write
    $p'=p/(p-1)$. In order to apply Theorem~\ref{thm:int:mainGN}, one would
    have to rewrite
    \[
        (\partial_t+v\cdot\nabla_x)f=\Dvs S
    \]
    in the Bessel form
    \[
        (\partial_t+v\cdot\nabla_x)f=D_v^\sigma S_B,
        \qquad
        S_B:=I_v^\sigma\Dvs S .
    \]
    Thus one would need the two estimates
    \begin{equation}
        \label{eq:needed-gagliardo-to-bessel-f}
        \|D_v^\sigma f\|_{\L^p(\R^{1+2d})}
        \lesssim
        \|\Dv f\|_{\L^p(\ddd\eta\otimes\ddd(t,x,v))}
    \end{equation}
    and
    \begin{equation}
        \label{eq:needed-gagliardo-to-bessel-source}
        \|I_v^\sigma\Dvs S\|_{\L^{p'}(\R^{1+2d})}
        \lesssim
        \|S\|_{\L^{p'}(\ddd\eta\otimes\ddd(t,x,v))}.
    \end{equation}
    These two bounds are compatible only when $p=2$. In the velocity
    variable, the seminorm $\|\Dv f\|_{\L^p(\ddd\eta\otimes\ddd v)}$
    corresponds to the homogeneous Besov norm
    $\Bdot^\sigma_{p,p}$, whereas $\|D_v^\sigma f\|_{\L^p_v}$
    corresponds to the homogeneous Triebel--Lizorkin norm
    $\Fdot^\sigma_{p,2}$. Hence
    \eqref{eq:needed-gagliardo-to-bessel-f} is the embedding
    $\Bdot^\sigma_{p,p} \hookrightarrow \Fdot^\sigma_{p,2}$.
    By the homogeneous Besov--Lizorkin--Triebel embeddings \cite[(1.2), p.~95]{MR454618},
    \[
        \Bdot^\sigma_{p,\min\{p,2\}}
        \hookrightarrow
        \Fdot^\sigma_{p,2}
        \hookrightarrow
        \Bdot^\sigma_{p,\max\{p,2\}},
    \]
    this holds in the required direction for $p\le 2$, but fails in general
    for $p>2$.

    On the other hand, by duality, \eqref{eq:needed-gagliardo-to-bessel-source}
    is equivalent to
    \[
        \|\Dv I_v^\sigma \varphi\|_{\L^p(\ddd\eta\otimes\ddd(t,x,v))}
        \lesssim
        \|\varphi\|_{\L^p(\R^{1+2d})} .
    \]
    In the velocity variable this is the opposite embedding,
    $\Fdot^\sigma_{p,2} \hookrightarrow \Bdot^\sigma_{p,p}$, which holds
    in the required direction for $p\ge 2$, but fails in general
    for $p<2$. Consequently, the two estimates needed to pass from the
    Gagliardo formulation to the Bessel formulation are simultaneously
    available only for $p=2$.

    For $p=2$, Plancherel gives
    \[
        \|D_v^\sigma f\|_{\L^2(\R^{1+2d})}
        \sim
        \|\Dv f\|_{\L^2(\ddd\eta\otimes\ddd(t,x,v))},
    \]
    and the corresponding dual estimate for $I_v^\sigma\Dvs S$. Thus
    Theorem~\ref{thm:int:mainGN} gives the estimate of
    Theorem~\ref{thm:int:mainGN-differencemp}. For $p\neq2$, however, it seems that
    Theorem~\ref{thm:int:mainGN-differencemp} is not a direct consequence of
    Theorem~\ref{thm:int:mainGN}.
\end{remark}

We can deduce the following gain of integrability for weak solutions to \eqref{eq:kinetic-nonlocal-p}.

\begin{corollary}
    \label{cor:nonlocal-p-laplace}
    Let $\sigma,p$ be as in Theorem \ref{thm:int:mainGN} and $f\in \L^p(\R^{1+2d})$ be sufficiently smooth,
    with $D_v^\sigma f\in \L^p(\R^{1+2d})$, and suppose that $f$ is a weak solution to
    \eqref{eq:kinetic-nonlocal-p}. Then $f\in \L^q(\R^{1+2d})$ with $q$ as in Theorem \ref{thm:int:mainGN}, and
    \[
        \|f\|_{\L^q(\R^{1+2d})}
        \lesssim
        \|D_v^\sigma f\|_{\L^p(\R^{1+2d})}^{\Gamma},
        \qquad
        \Gamma:=\frac{p\sigma(d+1)+2d}{2(d\sigma+d+\sigma)}.
    \]
\end{corollary}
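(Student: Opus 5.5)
Corollary~\ref{cor:nonlocal-p-laplace} follows from Theorem~\ref{thm:int:mainGN} by choosing the source term correctly. A weak solution of \eqref{eq:kinetic-nonlocal-p} satisfies, in the sense of distributions,
\[
    (\partial_t+v\cdot\nabla_x)f = D_v^\sigma S,
    \qquad S:=-\abs{D_v^\sigma f}^{p-2}D_v^\sigma f .
\]
This is immediate from the weak formulation, since $D_v^\sigma$ is symmetric on test functions: testing with $\varphi$ gives $\int f\,(\partial_t+v\cdot\nabla_x)\varphi = \int \abs{D_v^\sigma f}^{p-2}D_v^\sigma f\, D_v^\sigma\varphi$. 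This is exactly $\langle D_v^\sigma S,\varphi\rangle$ with the sign convention above, once the sign of the transport term under integration by parts is tracked. So \eqref{eq:kinetic-nonlocal-source} holds with this $S$.

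Next we check the integrability of $S$. Pointwise, $\abs{S}^{p/(p-1)} = \abs{D_v^\sigma f}^{p}$, hence
\[
    \|S\|_{\L^{\frac{p}{p-1}}(\R^{1+2d})} = \|D_v^\sigma f\|_{\L^p(\R^{1+2d})}^{p-1} < \infty .
\]
The qualitative smoothness of $S$ required in Theorem~\ref{thm:int:mainGN} is part of the standing ``sufficiently smooth'' assumption on $f$. This is the only point needing care: for $p<2$ the map $z\mapsto\abs{z}^{p-2}z$ is not smooth at $0$. Since the paper explicitly works only with sufficiently regular objects and postpones density arguments, I would take this regularity as given rather than attempt an approximation.

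Applying Theorem~\ref{thm:int:mainGN} yields $f\in\L^q(\R^{1+2d})$ and
\[
    \|f\|_{\L^q}\le C\,\|D_v^\sigma f\|_{\L^p}^{\frac{d\sigma+2d+\sigma}{2(d\sigma+d+\sigma)}}\,\|D_v^\sigma f\|_{\L^p}^{(p-1)\frac{\sigma(d+1)}{2(d\sigma+d+\sigma)}} .
\]
It remains to add the exponents. The numerator is
\[
    d\sigma+2d+\sigma+(p-1)\sigma(d+1) = 2d + p\sigma(d+1),
\]
so the total exponent equals $\Gamma$. No real obstacle is expected; the argument is bookkeeping on top of Theorem~\ref{thm:int:mainGN}.
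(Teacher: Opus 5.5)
Your proposal is correct and is the deduction the paper intends: it gives no separate proof of the corollary, but the natural argument is exactly to take $S=-\abs{D_v^\sigma f}^{p-2}D_v^\sigma f$, so that $\|S\|_{\L^{p'}}=\|D_v^\sigma f\|_{\L^p}^{p-1}$, and apply Theorem~\ref{thm:int:mainGN}. Your sign check and exponent computation giving $\Gamma$ are right, and, as you say, the regularity of $S$ is covered by the standing ``sufficiently smooth'' assumption, as in the paper.
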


In fact, the estimates of Theorems \ref{thm:int:mainGN-differencemp} and \ref{thm:int:mainGN} apply more
generally to nonlinear kinetic equations with nonlocal diffusion and $p$-growth, and in particular can be used
in the presence of rough diffusion coefficients. The same proofs apply verbatim to nonnegative weak sub- and
supersolutions; we refer to \cite{dietert2025criticaltrajectorieskineticgeometry,dnz_kinpLaplace_2026} for the
technical details.
One may also allow $S$ and the velocity derivative of $f$ to lie in Lebesgue spaces with different exponents;
compare \cite{dnz_kinpLaplace_2026}.
The estimates may be localised, at the cost of suitable tail terms due to their nonlocal nature.
Other source terms can be included in both theorems; compare \cite{dnz_kinpLaplace_2026,n_kinetictor_2026}.

The estimates obtained in this work open up the study of regularity theory for nonlinear kinetic equations
with nonlocal diffusion of $p$-growth. Questions of interest include, for example, local boundedness, a priori
H\"older continuity and weak Harnack inequalities.

Let us also comment on the range of $p$ in Theorems \ref{thm:int:mainGN-differencemp} and
\ref{thm:int:mainGN}. This is the nonlocal analogue of the restriction in \cite{dnz_kinpLaplace_2026}. It
seems possible that one may be able to improve the lower bound when studying localised estimates. However, in
the global setting we conjecture this range to be optimal.

\subsection*{Literature.}
Theorems \ref{thm:int:mainGN-differencemp} and \ref{thm:int:mainGN} are available in the literature in
the case $p = 2$. Here, the Gagliardo and Bessel formulations agree up to normalisation. The sharp gain of
integrability is explained first in \cite{MR4049224}; see also \cite{MR4688651}. An alternative proof, which
yields only a subcritical gain of integrability, is given in \cite{MR4039522} based on velocity-averaging. In
functional-analytic form, the endpoint embedding is contained in the kinetic Sobolev space theory in
\cite{auscher2025weaksolutionskolmogorovfokkerplanckequations,auscher2026kineticsobolevspaces}. To the best of
our knowledge, the proof given in the present manuscript is the first to obtain the critical gain of
integrability while not relying on the fundamental solution of the fractional Kolmogorov equation.

Nonlinear kinetic equations with local
diffusion of $p$-growth were studied in \cite{dnz_kinpLaplace_2026}. In that paper, kinetic trajectories were adapted to the nonlinear
structure and then used to prove kinetic Gagliardo--Nirenberg inequalities, which in turn led to local
boundedness of subsolutions to the kinetic $p$-Laplace equation with local diffusion via De~Giorgi iteration.

Kinetic trajectories were introduced in \cite{MR4875497}, while critical kinetic trajectories and a systematic
account of their use were developed in \cite{dietert2025criticaltrajectorieskineticgeometry}. They provide a
flexible framework for estimates in kinetic spaces, where regularity is measured through
$\partial_t+v\cdot\nabla_x$ and velocity derivatives. Applications include Poincar\'e inequalities
\cite{MR4875497,anceschi2025poincareinequalityquantitativegiorgi}, gain of integrability for the Kolmogorov
equation \cite{dietert2025criticaltrajectorieskineticgeometry}, and transfer of regularity
\cite{n_kinetictor_2026}. Moreover, they can be used to prove the Harnack inequality for the Kolmogorov
equation with rough diffusion coefficients
\cite{dietert2025criticaltrajectorieskineticgeometry,dietert2025nashsgboundkolmogorov,%
    anceschi2025poincareinequalityquantitativegiorgi}. We also note that
\cite{anceschi2025poincareinequalityquantitativegiorgi} used kinetic trajectories in a nonlocal setting to
prove a kinetic Poincar\'e inequality; our approach is partly inspired by their treatment of the fractional
derivatives. Kinetic trajectories were adapted to nonlinear kinetic equations in \cite{dnz_kinpLaplace_2026}.

A more classical approach to proving inequalities in kinetic Sobolev spaces is based on the fundamental
solution; see \cite{MR2068847,auscher2025weaksolutionskolmogorovfokkerplanckequations,%
    auscher2026kineticsobolevspaces,MR4049224,MR4336467,MR4350284,MR4898687}. We also mention the
influential work \cite{MR1949176}.

In the linear case $p=2$ the Kolmogorov equation with nonlocal diffusion and rough jump kernel was first
studied in \cite{MR4049224}, where a weak Harnack inequality and a gain of integrability were obtained.
Alternative proofs were later given in \cite{MR4688651,MR4039522}. It was shown in \cite{MR4828346} that the
Harnack inequality fails in general.

For nonlinear kinetic equations with rough diffusion and $p=2$ growth, a De Giorgi--Nash--Moser theory in the
local case was developed in \cite{MR4468371} based on the linear theory of \cite{MR3923847}. Boundedness
properties for nonlinear kinetic equations with rough nonlocal diffusion and $p=2$ growth were studied in
\cite{MR4950599}. There, the gain-of-integrability problem for nonlinear equations with $p$-growth was
identified as a difficult open problem. The present paper resolves that question.

Outside the kinetic setting, the standard fractional $p$-Laplacian already has a substantial elliptic and
parabolic theory. On the elliptic side, nonlocal Harnack inequalities and local regularity for fractional
$p$-minimisers were established in \cite{MR3237774,MR3542614}, complemented by global H\"older regularity up
to the boundary in \cite{MR3593528}; see also \cite{MR4030247} for the equivalence of the main notions of
solution. In the parabolic setting, existence, uniqueness and asymptotic behaviour for the fractional
$p$-Laplacian flow were studied in \cite{MR3491533}, while local boundedness and H\"older continuity were
obtained in \cite{MR4204564,MR4679981}. We also mention the two recent works
\cite{MR4964333,giovagnoli2025c1alpharegularityfractionalpharmonic}. For nonlinear nonlocal $p$-Laplace type
operators built on the space $\H^{\sigma,p}$, which are closer in spirit to the Bessel-type model studied in this
manuscript, see \cite{MR3403430,MR3714833}.

\subsection*{Notation}
Throughout, let $d\in \N$ be the spatial dimension. The subscripts $t$, $x$, and
$v$ indicate, respectively, the first variable, the next $d$ variables, and the
final $d$ variables. We use these subscripts for functions, gradients $\nabla$,
Lebesgue spaces $\L^p$, and Sobolev spaces $\H^s$, with the intended meaning
being clear from the context. We write $\L^p$ for the usual Lebesgue spaces and
$\L^{p,\infty}$ for the corresponding weak Lebesgue spaces. The relations
$\lesssim,\sim$ are understood up to a universal multiplicative
constant. In proofs, $C$ denotes a universal constant whose value may vary from
one line to the next. For $1<p<\infty$, we write $p'=p/(p-1)$.

\subsection*{Acknowledgements} The author thanks Helge Dietert and Rico Zacher for fruitful discussions. Lukas
Niebel is funded by the Deutsche Forschungsgemeinschaft (DFG, German Research Foundation) under Germany's
Excellence Strategy EXC 2044/2--390685587, Mathematics M\"unster: Dynamics--Geometry--Structure.

\section{On the Gagliardo-type nonlocal \texorpdfstring{$p$}{p}-Laplace operator}
\label{sec:prelim}

We first introduce the kinetic $p$-Laplace equation with Gagliardo-type
nonlocal diffusion and its weak divergence-form formulation.
We set
\[
    [\delta_h f](t,x,v):=f(t,x,v+h)-f(t,x,v),
    \qquad
    \Dv f(t,x,v,h):=\frac{\delta_h f(t,x,v)}{\abs{h}^{\sigma}},
\]
and work with the measure $\dd \eta(h):=\frac{\dd h}{\abs{h}^d}$. Then the
natural seminorm is
\[
    \begin{aligned}
        [f]_{\L^p_{t,x}\Wdot_v^{\sigma,p}(\R^{1+2d})}
         & :=
        \Biggl(\int_{\R^{1+2d}}\int_{\R^d}
        \frac{\abs{\delta_h f(t,x,v)}^p}{\abs{h}^{d+\sigma p}}
        \dd h\dd(t,x,v)\Biggr)^{1/p} \\
         & =
        \|\Dv f\|_{\L^p(\R^{1+2d}\times\R^d,\ddd \eta\otimes\ddd(t,x,v))}.
    \end{aligned}
\]
When no confusion can arise, we suppress the $(t,x)$-dependence.
The adjoint divergence introduced below is the corresponding nonlocal
divergence operator in the spirit of the nonlocal vector calculus developed in
\cite{MR2728700,MR3023366}; see also the recent divergence-theorem formulation in
\cite{MR4711580}. A closely related Gagliardo-type divergence-form setting for
nonlocal $p$-Laplacian equations is used in \cite{MR4942309}.

\begin{definition}
    Let $F\colon \R^d\times\R^d\to\R$ be a measurable function for which the
    integral below converges absolutely.
    We define
    \begin{equation*}
        \Dvs F(v)
        :=
        \int_{\R^d}
        \frac{F(v,h)-F(v-h,h)}{\abs{h}^{d+\sigma}}\dd h.
    \end{equation*}
\end{definition}

\begin{lemma}
    \label{lem:adjoint}
    Let $\varphi\in \C_c^{\infty}(\R^d)$ and let
    $F\in \L^1_{\mathrm{loc}}(\R^d\times\R^d,
        \frac{\ddd h}{\abs{h}^d}\otimes \ddd v)$ be such that all integrals below
    are finite. Then
    \begin{equation*}
        \int_{\R^d} \varphi(v)\,\Dvs F(v)\dd v
        =
        -\int_{\R^d}\int_{\R^d}
        \Dv\varphi(v,h)\,F(v,h)\,\frac{\dd h}{\abs{h}^d}\dd v.
    \end{equation*}
\end{lemma}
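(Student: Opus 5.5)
The identity follows by expanding the left-hand side, splitting it into two terms, and applying in the second term the change of variables $v\mapsto v+h$ for fixed $h$. By the definition of $\Dvs$ we have
\[
    \int_{\R^d}\varphi(v)\,\Dvs F(v)\dd v
    =
    \int_{\R^d}\int_{\R^d}\varphi(v)\,\frac{F(v,h)-F(v-h,h)}{\abs{h}^{d+\sigma}}\dd h\dd v .
\]
The hypothesis that all integrals are finite is understood as absolute convergence of the double integrals of $\varphi(v)F(v,h)\abs{h}^{-d-\sigma}$ and of $\varphi(v)F(v-h,h)\abs{h}^{-d-\sigma}$ separately. Under this reading Fubini--Tonelli applies to each of the two pieces, and the integral splits as
\[
    \iint \varphi(v)F(v,h)\,\frac{\dd h\dd v}{\abs{h}^{d+\sigma}}
    -\iint \varphi(v)F(v-h,h)\,\frac{\dd h\dd v}{\abs{h}^{d+\sigma}} .
\]

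In the second term I fix $h$ and substitute $v\mapsto v+h$ in the $v$-integral, which preserves Lebesgue measure. The term becomes
\[
    \iint \varphi(v+h)F(v,h)\,\frac{\dd h\dd v}{\abs{h}^{d+\sigma}} .
\]
Subtracting it from the first term and recombining under a single integral, which is legitimate because both pieces converge absolutely, gives
\[
    -\iint \bigl(\varphi(v+h)-\varphi(v)\bigr)F(v,h)\,\frac{\dd h\dd v}{\abs{h}^{d+\sigma}} .
\]
Writing $\abs{h}^{-d-\sigma}=\abs{h}^{-\sigma}\abs{h}^{-d}$ and recalling $\Dv\varphi(v,h)=\delta_h\varphi(v)/\abs{h}^\sigma$, this equals the claimed right-hand side $-\iint\Dv\varphi(v,h)\,F(v,h)\,\frac{\dd h}{\abs{h}^d}\dd v$.

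The only delicate point is justifying the split. The combined integrand, with the difference $F(v,h)-F(v-h,h)$, may converge only because of cancellation near $h=0$, whereas the two separated terms need not converge individually. If one adopts only the weaker reading of the hypothesis, the argument goes as follows. One restricts to $\abs{h}>\varepsilon$, where the change of variables and Fubini are harmless because $\varphi$ has compact support and $F\in\L^1_{\mathrm{loc}}$. This yields the identity with both sides truncated to $\{\abs{h}>\varepsilon\}$. One then lets $\varepsilon\to0$ by dominated convergence on both sides, using the assumed absolute convergence of the full integrals $\iint \abs{\varphi(v)}\,\abs{F(v,h)-F(v-h,h)}\,\abs{h}^{-d-\sigma}\dd h\dd v$ and $\iint\abs{\Dv\varphi}\,\abs{F}\,\frac{\dd h}{\abs{h}^d}\dd v$.
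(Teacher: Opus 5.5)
Your proposal is correct and matches the paper's one-line proof, which is Fubini plus the measure-preserving change of variables $v\mapsto v+h$ in the term containing $F(v-h,h)$. One minor point in your fallback: on $\{\abs{h}>\varepsilon\}$, local integrability of $F$ does not control large $\abs{h}$. This is easily repaired. For $\abs{h}>\operatorname{diam}(\supp\varphi)$ at most one of $\varphi(v)$, $\varphi(v+h)$ is nonzero, so both separated pieces are dominated there by the assumed finite integral of $\abs{\Dv\varphi}\,\abs{F}\,\abs{h}^{-d}$.
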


\begin{proof}
    This follows by Fubini and the change of variables $v\mapsto v+h$.
\end{proof}

For suitable $f$, and following the usual divergence-form realisation of
nonlocal $p$-Laplacian-type operators (see \cite{MR4942309}), we use the operator
\begin{equation*}
    \mathcal L_{p,\sigma}f
    :=
    -\frac12 \Dvs\bigl(\abs{\Dv f}^{p-2}\Dv f\bigr)
\end{equation*}
defined in the weak sense.
More precisely, if
\[
    \int_Q\int_{\R^d}\int_{\R^d}
    \abs{\Dv f(t,x,v,h)}^p
    \frac{\dd h}{\abs{h}^d}\dd v \dd (t,x)
    <\infty
    \qquad\text{for every }Q\comp\R^{1+d}_{t,x},
\]
then $\mathcal L_{p,\sigma}f$ is a well-defined distribution determined by
\begin{equation*}
    \bigl\langle \mathcal L_{p,\sigma}f,\varphi\bigr\rangle
    :=
    \frac12
    \int_{\R^{1+2d}} \int_{\R^d}
    \abs{\Dv f}^{p-2}\Dv f\,\Dv\varphi
    \frac{\dd h}{\abs{h}^d}\dd(t,x,v),
\end{equation*}
for every $\varphi\in\C_c^\infty(\R^{1+2d})$.

To compare with the pointwise notation, whenever the principal value below
exists, we write
\begin{equation*}
    [(-\Delta_v)_p^{\sigma} f](t,x,v)
    :=
    \PV\int_{\R^d}
    \frac{\abs{f(t,x,v)-f(t,x,w)}^{p-2}
        \bigl(f(t,x,v)-f(t,x,w)\bigr)}
    {\abs{v-w}^{d+\sigma p}}
    \dd w.
\end{equation*}
This is the standard principal-value fractional $p$-Laplacian, up to sign and
normalisation conventions; see for instance
\cite{MR3542614,MR3593528,MR4030247,MR4303657}.

\begin{remark}
    The identification of the weak and principal-value formulations for smooth
    functions is delicate in the singular range $1<p<2$, especially at critical
    points. For $\C^2$ functions, a standard sufficient condition is
    $p>2/(2-\sigma)$; otherwise, one works
    away from points where $\nabla_v f=0$; compare
    \cite[Section~3.2, Lemma~3.6]{MR4030247}. For suitably smooth functions the
    weak and principal-value formulations coincide. See also
    \cite[Section~1]{MR4303657} for the standard
    principal-value definition and its relation with the Gagliardo energy.
\end{remark}

\section{Kinetic trajectories}
\label{sec:kintraj}
We now define critical kinetic trajectories and derive representation formulas adapted to nonlocal diffusion.
All $2\times 2$ matrices act on $\R^{2d}$ through their tensor product with $\id_d$.

\subsection{Critical kinetic trajectories}
Let $0< \alpha<\beta<\infty$.
We consider the two forcing functions
\[
    g_1(r)=r^{\beta}\sin(\log r),
    \qquad
    g_2(r)=r^{\beta}\cos(\log r),
    \qquad r > 0.
\]
For $(t,x,v)\in\R^{1+2d}$, $r \ge 0$, $m_0\in\R\setminus\{0\}$ and $(m_1,m_2)\in\R^{2d}$ we define
\[
    \gamma^{\m}(r;(t,x,v))
    :=(\gamma_t^{\m},\gamma_x^{\m},\gamma_v^{\m})(r;(t,x,v))
\]
by
\begin{equation}
    \label{eq:gammam-nl}
    \gamma^{\m}(r;(t,x,v))
    :=
    \begin{pmatrix}
        t+m_0r^{\alpha} \\
        \displaystyle \E_{m_0}(r)\binom{x}{v} + \D_{m_0}\cW(r)\D_{m_0}^{-1}\binom{m_1}{m_2}
    \end{pmatrix},
\end{equation}
where, for $\delta\in\R$ and $r\ge0$,
\begin{equation}
    \label{eq:defDandE-nl}
    \cW(r):=
    \begin{pmatrix}
        g_1(r)                                   & g_2(r)                                   \\
        \dfrac{\dot g_1(r)}{\alpha r^{\alpha-1}} & \dfrac{\dot g_2(r)}{\alpha r^{\alpha-1}}
    \end{pmatrix},
    \qquad
    \D_{\delta}:=
    \begin{pmatrix}
        \delta & 0 \\
        0      & 1
    \end{pmatrix},
    \qquad
    \E_{\delta}(r):=
    \begin{pmatrix}
        1 & \delta r^{\alpha} \\
        0 & 1
    \end{pmatrix}.
\end{equation}
We emphasise that $\gamma^{\m}(0;(t,x,v))$ is to be understood as the limit $r \to 0$, which exists as $\alpha
    < \beta$.
We abbreviate
\[
    \A_{m_0}(r):=\D_{m_0}\cW(r)\D_{m_0}^{-1}
    \qquad\text{and}\qquad
    \homd:=(2\beta-\alpha)d+\alpha.
\]
The following properties are easy to verify; compare
\cite{dietert2025criticaltrajectorieskineticgeometry,dnz_kinpLaplace_2026,n_kinetictor_2026}.

\begin{enumerate}[label=(M\arabic*),ref=(M\arabic*),itemsep=0.15cm]
    \item \label{item:kinetic-trajectory} $\gamma^{\m}$ is a kinetic trajectory, that is
          \[
              \dot\gamma_x^{\m}(r)=\dot\gamma_t^{\m}(r)\,\gamma_v^{\m}(r),
              \qquad \dot\gamma_t^{\m}(r)=\alpha m_0r^{\alpha-1}.
          \]
    \item \label{item:kinetic-jacobian} There exists a universal constant $\cc = \cc_{\alpha,d}\neq0$ such
          that
          \[
              \det\bigl(\A_{m_0}(r)\bigr)=\cc\,r^{(2\beta-\alpha)d}
              \qquad\text{for all }r>0.
          \]
    \item \label{item:kinetic-inverse-bounds} For $i=1,2$ and all $r>0$,
          \[
              \abs{\bigl(\A_{m_0}(r)^{-1}\bigr)_{i;1}}
              \lesssim (1+\abs{m_0}^{-1})r^{-\beta},
              \qquad
              \abs{\bigl(\A_{m_0}(r)^{-1}\bigr)_{i;2}}
              \lesssim (1+\abs{m_0})r^{\alpha-\beta}.
          \]
    \item \label{item:kinetic-trajectory-bounds} For all $r>0$,
          \[
              \begin{cases}
                  \abs{\dot\gamma_v^{\m}(r)}
                  \lesssim \Bigl(\dfrac{\abs{m_1}}{\abs{m_0}}+\abs{m_2}\Bigr)r^{\beta-\alpha-1},
                  \\[0.2cm]
                  \abs{\gamma_v^{\m}(r)-v}
                  \lesssim \Bigl(\dfrac{\abs{m_1}}{\abs{m_0}}+\abs{m_2}\Bigr)r^{\beta-\alpha},
                  \\[0.2cm]
                  \abs{\gamma_x^{\m}(r)-x-m_0vr^{\alpha}}
                  \lesssim (\abs{m_1}+\abs{m_0}\abs{m_2})r^{\beta}.
              \end{cases}
          \]
\end{enumerate}
We also introduce the forcing matrix
\begin{equation*}
    \cF_{m_0}(r)
    :=
    \begin{pmatrix}
        \dfrac1{m_0}\partial_r\dfrac{\dot g_1(r)}{\alpha r^{\alpha-1}}
         &
        \partial_r\dfrac{\dot g_2(r)}{\alpha r^{\alpha-1}}
    \end{pmatrix}\in\R^{d\times 2d},
\end{equation*}
so that
\[
    \dot\gamma_v^{\m}(r;(t,x,v))
    =\cF_{m_0}(r)\binom{m_1}{m_2}.
\]

\subsection{Kinetic mollification}
We recall the kinetic translation group
\[
    (t,x,v)\circ (s,y,w)=(t+s,x+y+sv,v+w),
    \qquad
    (t,x,v)^{-1}=(-t,-x+tv,-v).
\]
For convenience, we also record
\[
    (t,x,v)^{-1}\circ(s,y,w)=(s-t,y-x-(s-t)v,w-v).
\]
Let $0 \le \psi\in \C_c^{\infty}(\R^{1+2d})$ satisfy
\[
    \supp\psi\subset (-2,-1)\times B_1(0)\times B_1(0),
    \qquad
    \int_{\R^{1+2d}}\psi\dd \m=1.
\]
The support in negative times is adapted to proving estimates for nonnegative weak subsolutions. For the
estimates proved here, it matters only that the time projection of the support is bounded away from zero. To
treat supersolutions, one would replace $(-2,-1)$ by $(1,2)$.

Let $N \in \N$. For a kernel $J\colon \R^{1+2d}\to\R^N$ we define the kinetic convolution
\begin{equation*}
    [T_Jf](t,x,v)
    :=\int_{\R^{1+2d}} f(\m) \cdot J\bigl((t,x,v)^{-1}\circ\m\bigr)\dd \m
\end{equation*}
on functions $f \colon \R^{1+2d} \to \R^N$.
If $J\colon \R^{1+2d}\times\R^d\to\R$, we define the extended convolution
\begin{equation}
    \label{eq:defTJ-extended}
    [T_J^{\eta}F](t,x,v)
    :=\int_{\R^{1+2d}}\int_{\R^d}
    F(\m,h)J\bigl((t,x,v)^{-1}\circ \m,h\bigr)
    \dd \eta(h)\dd \m.
\end{equation}

For $\tau>0$ we define the kinetic mollifier
\begin{equation}
    \label{eq:kernelKtau}
    K_\tau(s,y,w):=\abs{\cc}^{-1}\tau^{-\homd}
    \psi\Bigl(\frac{s}{\tau^\alpha},\A_{s/\tau^\alpha}(\tau)^{-1}\binom{y}{w}\Bigr).
\end{equation}
A change of variables gives
\begin{equation}
    \label{eq:kinmollifier}
    [T_{K_\tau}f](t,x,v)=\int_{\R^{1+2d}} f\bigl(\gamma^{\m}(\tau;(t,x,v))\bigr)\psi(\m)\dd \m.
\end{equation}

Constants will depend on $\alpha$ and the choice of $\psi$, but we do not investigate the dependence.

\subsection{Representation formulas}

In this section we derive representation formulas for functions satisfying either
\eqref{eq:linear-structure-intro} or \eqref{eq:kinetic-nonlocal-source}, where the regularity of $f$ is
controlled either in terms of $\Dv f$ or $D_v^\sigma f$.

\subsubsection{Representation I}

Here we work with differences, which makes the representation formulas slightly more involved.

For an $h$-dependent kernel $J=J(s,y,w,h)$ and an
$h$-dependent function $F=F(t,x,v,h)$, we recall the extended convolution
notation from \eqref{eq:defTJ-extended}, namely
\[
    [T_J^\eta F](t,x,v)
    :=
    \int_{\R^{1+2d}}\int_{\R^d}
    F(\m,h)\,
    J\bigl((t,x,v)^{-1}\circ \m,h\bigr)
    \dd \eta(h)\dd \m.
\]

We also fix the constant $c_{d,\sigma}^{\mathrm{Gag}}\,>0$ by
\[
    -\Dfrs_w \Dfr_w\phi
    =
    c_{d,\sigma}^{\mathrm{Gag}}\,(-\Delta_w)^\sigma\phi
    \qquad\text{for all }\phi\in \C_c^\infty(\R^d).
\]
For $V\in \C_c^\infty(\R^{1+2d};\R^d)$ we define its fractional
antidivergence in the $w$-variable by
\[
    \mathcal R_{\sigma,w}V
    :=
    \frac1{c_{d,\sigma}^{\mathrm{Gag}}}\,
    \Dfr_w(-\Delta_w)^{-\sigma}\bigl(\nabla_w\!\cdot V\bigr).
\]
Here and below, for mean-zero $g\in \C_c^\infty(\R^d)$, we define
$(-\Delta_w)^{-\sigma}g$ distributionally by the homogeneous Fourier
multiplier
\[
    \widehat{(-\Delta_w)^{-\sigma}g}(\xi)
    =
    \abs{\xi}^{-2\sigma}\widehat g(\xi),
    \qquad \xi\neq0.
\]
Since $\widehat g(0)=0$, and in fact $\widehat g(\xi)=O(\abs{\xi})$ near
$\xi=0$, the right-hand side is locally integrable for $0<\sigma<1$.
When $2\sigma<d$, this agrees with the usual Riesz potential of order
$2\sigma$. In the borderline or supercritical cases $2\sigma\ge d$, we
use only this Fourier-multiplier definition, not the subcritical
Riesz-kernel representation. For
$g=\nabla_w\!\cdot V(s,y,\cdot)$, the mean-zero condition holds because
$V$ is compactly supported in $w$.
By construction,
\[
    -\Dfrs_w(\mathcal R_{\sigma,w}V)
    =
    \nabla_w\!\cdot V.
\]

For $r>0$ and $(s,y,w)\in\R^{1+2d}$, define
\[
    H_r(s,y,w):=\frac{\alpha s}{r}K_r(s,y,w),
\]
\[
    B_r(s,y,w)
    :=
    \cF_{s/r^\alpha}(r)\,
    \A_{s/r^\alpha}(r)^{-1}\binom{y}{w},
    \qquad
    \Theta_r(s,y,w):=B_r(s,y,w)\,K_r(s,y,w).
\]
Next define the $h$-dependent kernels
\[
    \frG_{r,\sigma}(s,y,w,h)
    :=
    [\Dfr_w H_r](s,y,w,h)
    =
    \frac{H_r(s,y,w+h)-H_r(s,y,w)}{\abs{h}^\sigma},
\]
and
\[
    \frG^v_{r,\sigma}(s,y,w,h)
    :=
    [\mathcal R_{\sigma,w}\Theta_r]((s,y,w),h)
    =
    \frac1{c_{d,\sigma}^{\mathrm{Gag}}}\,
    \Dfr_w(-\Delta_w)^{-\sigma}\bigl(\nabla_w\!\cdot \Theta_r\bigr)((s,y,w),h).
\]
The latter is well-defined as $\Theta_r(s,y,w)$ is compactly supported in $w$.

\begin{prop}
    \label{prop:representation-formula-nonlocal}
    Assume that $f\colon \R^{1+2d} \to \R$ and $S\colon \R^{1+2d} \times \R^d \to \R$ are sufficiently smooth
    and satisfy
    \[
        (\partial_t+v\cdot\nabla_x)f=\Dvs S
        \qquad\text{in }\R^{1+2d}.
    \]
    Then for every $\tau>0$ and every $(t,x,v)\in\R^{1+2d}$,
    \begin{equation}
        \label{eq:representation-formula-nonlocal}
        f(t,x,v)-[T_{K_\tau}f](t,x,v)
        =
        \int_0^\tau [T_{\frG_{r,\sigma}}^\eta S](t,x,v)\dd r
        +
        \int_0^\tau [T_{\frG^v_{r,\sigma}}^\eta(\Dv f)](t,x,v)\dd r.
    \end{equation}
\end{prop}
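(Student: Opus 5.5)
We follow the standard critical-trajectory argument: differentiate the kinetic mollification in the scale parameter, use the kinetic-trajectory property to split the derivative into a transport part and a forcing part, and treat each by a nonlocal integration by parts.

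\textbf{Step 1: fundamental theorem of calculus in the scale.} By \eqref{eq:kinmollifier}, $[T_{K_r}f](t,x,v)=\int f(\gamma^{\m}(r;(t,x,v)))\psi(\m)\dd\m$. As $r\to0$, $\gamma^{\m}(r)\to(t,x,v)$ because $\alpha<\beta$, so $T_{K_r}f\to f$. Hence
\[
f-T_{K_\tau}f=-\int_0^\tau \partial_r[T_{K_r}f]\dd r .
\]

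\textbf{Step 2: split the $r$-derivative.} Differentiating under the integral and using \ref{item:kinetic-trajectory}, we get
\[
\partial_r f(\gamma^{\m}(r))=\alpha m_0r^{\alpha-1}\,[(\partial_t+v\cdot\nabla_x)f](\gamma^{\m}(r))+\dot\gamma_v^{\m}(r)\cdot\nabla_vf(\gamma^{\m}(r)).
\]
We then undo the change of variables $\m\mapsto(s,y,w)=(t,x,v)^{-1}\circ\gamma^{\m}(r)$, so that $m_0=s/r^\alpha$ and $(m_1,m_2)=\A_{s/r^\alpha}(r)^{-1}\bigl((y,w)-\E(\dots)\bigr)$.
\begin{itemize}
\item In the first term, $\alpha m_0 r^{\alpha-1}=\alpha s/r$, which produces the kernel $H_r$.
\item In the second term, $\dot\gamma_v^{\m}=\cF_{m_0}(r)(m_1,m_2)^T$ becomes $B_r$, which produces $\Theta_r$.
\end{itemize}
Some care is needed to match the group law, since $\E_{m_0}(r)(x,v)$ versus $(x+sv,v)$ accounts for the translation. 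One also checks that the Jacobian is $\abs{\cc}r^{\homd}$ by \ref{item:kinetic-jacobian}, which matches the normalisation in \eqref{eq:kernelKtau}. This yields
\[
-\partial_r T_{K_r}f=-T_{H_r}\bigl[(\partial_t+v\cdot\nabla_x)f\bigr]-T_{\Theta_r}[\nabla_vf].
\]
The signs will be fixed by tracking the orientation.

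\textbf{Step 3: the transport term.} We insert $(\partial_t+v\cdot\nabla_x)f=\Dvs S$. The kinetic convolution in the variable $\m$ is, for fixed $(t,x)$ components, a translation in $w$, so we may apply Lemma~\ref{lem:adjoint} in the velocity variable of $\m$ with test function $H_r((t,x,v)^{-1}\circ\cdot)$. This moves $\Dvs$ onto the kernel as $-\Dfr_w H_r$. Velocity increments of $\m$ commute with the group action in $w$, up to the $x$-shift $sv$, which does not involve the integration variable's velocity. The result is $T^\eta_{\frG_{r,\sigma}}S$ with the correct sign. The smoothness and compact support of $H_r$ in $w$ justify Fubini.

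\textbf{Step 4: the forcing term (main obstacle).} We must rewrite $T_{\Theta_r}[\nabla_v f]$ using only $\Dv f$.
\begin{itemize}
\item Integrate by parts in $w$ to obtain $-\int f\,\nabla_w\cdot\Theta_r$.
\item Use the identity $\nabla_w\cdot\Theta_r=-\Dfrs_w(\mathcal R_{\sigma,w}\Theta_r)$, which holds by construction.
\item Apply Lemma~\ref{lem:adjoint} again to get $\int\int \Dv f\,\frG^v_{r,\sigma}\dd\eta\dd\m$.
\end{itemize}
The delicate points are the following.
\begin{itemize}
\item $\mathcal R_{\sigma,w}\Theta_r$ is not compactly supported; it only has a fractional tail. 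We need enough decay of $\frG^v_{r,\sigma}$ in $w$ and $h$ to justify Lemma~\ref{lem:adjoint} against $f\in\L^p$.
\item For this we use that $\nabla_w\cdot\Theta_r$ is mean-zero with a vanishing first Fourier moment. This gives decay of $(-\Delta_w)^{-\sigma}\nabla_w\cdot\Theta_r$ of order $\abs{w}^{-d-1+2\sigma}$, or works directly via Plancherel in $w$. We then pair in $L^2$ on smooth $f$ and pass through a cutoff approximation.
\end{itemize}
Collecting Steps 1–4 and integrating over $r\in(0,\tau)$ gives \eqref{eq:representation-formula-nonlocal}.
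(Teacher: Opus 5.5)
Your proposal is correct and follows essentially the same route as the paper: the fundamental theorem of calculus in the scale $r$, the change to relative variables $(s,y,w)=(t,x,v)^{-1}\circ\gamma^{\m}(r;(t,x,v))$ producing $H_r$ and $\Theta_r$, Lemma~\ref{lem:adjoint} for the source term, and integration by parts in $w$ combined with $-\Dfrs_w\frG^v_{r,\sigma}=\nabla_w\cdot\Theta_r$ for the forcing term. Two small corrections. First, in relative variables the group law absorbs $\E_{m_0}(r)$ exactly, so $\binom{y}{w}=\A_{m_0}(r)\binom{m_1}{m_2}$ with no subtracted term. Second, the tail $\abs{w}^{-d-1+2\sigma}$ of $(-\Delta_w)^{-\sigma}\nabla_w\cdot\Theta_r$ comes from the divergence structure alone, as in Lemma~\ref{lem:potential-divergence-decay}; first moments need not vanish.
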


\begin{proof}
    Fix $(t,x,v)\in\R^{1+2d}$. By \eqref{eq:kinmollifier},
    \[
        [T_{K_r}f](t,x,v)
        =
        \int_{\R^{1+2d}} f(\gamma^\m(r;(t,x,v)))\psi(\m)\dd \m.
    \]
    Since $[T_{K_r}f](t,x,v)\to f(t,x,v)$ as $r\downarrow0$, we have
    \[
        f(t,x,v)-[T_{K_\tau}f](t,x,v)
        =
        -\int_0^\tau \frac{\dd}{\dd r}[T_{K_r}f](t,x,v)\dd r.
    \]
    Differentiating under the integral sign and using~\ref{item:kinetic-trajectory} gives
    \begin{align*}
         & \frac{\dd}{\dd r}f(\gamma^\m(r;(t,x,v))) \\
         & =
        \alpha m_0r^{\alpha-1}
        \bigl[(\partial_t+v\cdot\nabla_x)f\bigr](\gamma^\m(r;(t,x,v)))
        +
        \dot\gamma_v^\m(r;(t,x,v))
        \cdot
        [\nabla_v f](\gamma^\m(r;(t,x,v))).
    \end{align*}
    Using the structural equation at the point $\gamma^\m(r;(t,x,v))$, we obtain
    \begin{align*}
         & \frac{\dd}{\dd r}f(\gamma^\m(r;(t,x,v))) \\
         & =
        \alpha m_0r^{\alpha-1}
            [\Dvs S](\gamma^\m(r;(t,x,v)))
        +
        \dot\gamma_v^\m(r;(t,x,v))
        \cdot
        [\nabla_v f](\gamma^\m(r;(t,x,v))).
    \end{align*}
    Hence
    \[
        \frac{\ddd}{\ddd r}[T_{K_r}f](t,x,v)
        =
        I_r^S(t,x,v)+I_r^f(t,x,v),
    \]
    where
    \[
        I_r^S(t,x,v)
        :=
        \int_{\R^{1+2d}}
        \alpha m_0r^{\alpha-1}
            [\Dvs S](\gamma^\m(r;(t,x,v)))
        \psi(\m)\dd \m,
    \]
    and
    \[
        I_r^f(t,x,v)
        :=
        \int_{\R^{1+2d}}
        \dot\gamma_v^\m(r;(t,x,v))
        \cdot
        [\nabla_v f](\gamma^\m(r;(t,x,v)))
        \psi(\m)\dd \m.
    \]

    We now change variables from the trajectory parameter
    $\m=(m_0,m_1,m_2)$ to the relative kinetic variables $(s,y,w)$ defined by
    \[
        (s,y,w)
        =
        (t,x,v)^{-1}\circ\gamma^\m(r;(t,x,v)).
    \]
    Equivalently,
    \[
        (t,x,v)\circ(s,y,w)
        =
        \gamma^\m(r;(t,x,v)).
    \]
    By \eqref{eq:gammam-nl}, \eqref{eq:defDandE-nl} and~\ref{item:kinetic-jacobian}, this change of variables
    gives
    \[
        s=m_0r^\alpha,
        \qquad
        \binom{y}{w}
        =
        \A_{m_0}(r)\binom{m_1}{m_2},
        \qquad
        \dd  (s,y,w)
        =
        \abs{\cc}r^{\homd}\dd\m.
    \]
    Therefore,
    \[
        m_0=\frac{s}{r^\alpha},
        \qquad
        \binom{m_1}{m_2}
        =
        \A_{s/r^\alpha}(r)^{-1}\binom{y}{w},
    \]
    and, by the definition of $K_r$,
    \[
        \psi(\m)\dd \m
        =
        K_r(s,y,w) \dd(s,y,w).
    \]

    Hence the source term becomes
    \[
        I_r^S(t,x,v)
        =
        \int_{\R^{1+2d}}
        [\Dvs S]\bigl((t,x,v)\circ(s,y,w)\bigr)
        H_r(s,y,w)
        \dd(s,y,w) ,
    \]
    because
    \[
        \alpha m_0r^{\alpha-1}K_r(s,y,w)
        =
        \frac{\alpha s}{r}K_r(s,y,w)
        =
        H_r(s,y,w).
    \]
    Moreover, since
    \[
        \dot\gamma_v^\m(r;(t,x,v))
        =
        \cF_{m_0}(r)\binom{m_1}{m_2}
        =
        \cF_{s/r^\alpha}(r)
        \A_{s/r^\alpha}(r)^{-1}\binom{y}{w},
    \]
    the term containing $\nabla_v f$ becomes
    \[
        I_r^f(t,x,v)
        =
        \int_{\R^{1+2d}}
        [\nabla_v f]\bigl((t,x,v)\circ(s,y,w)\bigr)
        \cdot
        \Theta_r(s,y,w)
        \dd(s,y,w).
    \]

    We first treat the term containing $S$. In the relative variables
    $(s,y,w)$, changing the velocity variable of
    $(t,x,v)\circ(s,y,w)$ is the same as changing the variable $w$.
    Thus, using the adjointness of $\Dfr_w$ and $\Dfrs_w$,
    \begin{align*}
        -I_r^S(t,x,v)
         & =
        -\int_{\R^{1+2d}}
        [\Dvs S]\bigl((t,x,v)\circ(s,y,w)\bigr)
        H_r(s,y,w)
        \dd(s,y,w)           \\
         & =
        \int_{\R^{1+2d}}\int_{\R^d}
        S\bigl((t,x,v)\circ(s,y,w),h\bigr)
        [\Dfr_w H_r](s,y,w,h)
        \dd\eta(h)\dd(s,y,w) \\
         & =
        \int_{\R^{1+2d}}\int_{\R^d}
        S\bigl((t,x,v)\circ(s,y,w),h\bigr)
        \frG_{r,\sigma}(s,y,w,h)
        \dd\eta(h)\dd(s,y,w) \\
         & =
        [T_{\frG_{r,\sigma}}^{\eta}S](t,x,v).
    \end{align*}

    We next treat the term containing $\nabla_v f$. Since
    \[
        \nabla_w\bigl[f\bigl((t,x,v)\circ(s,y,w)\bigr)\bigr]
        =
        [\nabla_v f]\bigl((t,x,v)\circ(s,y,w)\bigr),
    \]
    integration by parts in the $w$-variable gives
    \begin{align*}
        -I_r^f(t,x,v)
         & =
        -\int_{\R^{1+2d}}
        \nabla_w\bigl[f\bigl((t,x,v)\circ(s,y,w)\bigr)\bigr]
        \cdot
        \Theta_r(s,y,w)
        \dd(s,y,w) \\
         & =
        \int_{\R^{1+2d}}
        f\bigl((t,x,v)\circ(s,y,w)\bigr)
        \nabla_w\!\cdot\Theta_r(s,y,w)
        \dd(s,y,w).
    \end{align*}
    By definition of $\frG^v_{r,\sigma}$,
    \[
        -\Dfrs_w\frG^v_{r,\sigma}
        =
        \nabla_w\!\cdot\Theta_r.
    \]
    Therefore,
    \begin{align*}
         & -I_r^f(t,x,v) \\
         & =
        \int_{\R^{1+2d}}
        f\bigl((t,x,v)\circ(s,y,w)\bigr)
        \bigl(-\Dfrs_w\frG^v_{r,\sigma}\bigr)(s,y,w)
        \dd(s,y,w)       \\
         & =
        \int_{\R^{1+2d}}\int_{\R^d}
        \Dfr_w\bigl[f\bigl((t,x,v)\circ(s,y,w)\bigr)\bigr](s,y,w,h)
        \frG^v_{r,\sigma}(s,y,w,h)
        \dd\eta(h)\dd(s,y,w).
    \end{align*}
    Since changing $w$ in $(t,x,v)\circ(s,y,w)$ is the same as changing
    the velocity variable in the physical point, we have
    \[
        \Dfr_w\bigl[f\bigl((t,x,v)\circ(s,y,w)\bigr)\bigr](s,y,w,h)
        =
        [\Dv f]\bigl((t,x,v)\circ(s,y,w),h\bigr).
    \]
    Hence
    \begin{align*}
        -I_r^f(t,x,v)
         & =
        \int_{\R^{1+2d}}\int_{\R^d}
        [\Dv f]\bigl((t,x,v)\circ(s,y,w),h\bigr)
        \frG^v_{r,\sigma}(s,y,w,h)
        \dd\eta(h)\dd(s,y,w) \\
         & =
        [T_{\frG^v_{r,\sigma}}^{\eta}(\Dv f)](t,x,v).
    \end{align*}

    Combining the identities for $I_r^S(t,x,v)$ and $I_r^f(t,x,v)$ and
    integrating in $r\in(0,\tau)$ proves
    \eqref{eq:representation-formula-nonlocal}.
\end{proof}

\subsubsection{Representation II}
We consider the structural equation
\begin{equation*}
    (\partial_t+v\cdot\nabla_x)f=D_v^\sigma S_0,
\end{equation*}
with $S_0\in \C_c^\infty(\R^{1+2d})$, and we assume control of $D_v^\sigma f$. In this case, the
representation formula is very similar to the local case; compare
\cite{dietert2025criticaltrajectorieskineticgeometry,dnz_kinpLaplace_2026}.
The main difference is that the kernels are nonlocal in the sense that they are no longer compactly supported.

For the trajectory forcing we introduce the local vector-valued kernel
\begin{equation}
    \label{eq:def-forcing-kernel}
    \vec K_r(s,y,w)
    :=-\abs{\cc}^{-1}r^{-\homd}
    \psi\Bigl(\frac{s}{r^\alpha},\A_{s/r^\alpha}(r)^{-1}\binom{y}{w}\Bigr)
    \cF_{s/r^\alpha}(r)\A_{s/r^\alpha}(r)^{-1}\binom{y}{w}.
\end{equation}
We also define
\begin{equation}
    \label{eq:def-Lr}
    L_r(s,y,w):=-\nabla_w\cdot \vec K_r(s,y,w).
\end{equation}
Since $\vec K_r$ is compactly supported in the $w$-variable, one has
\begin{equation}
    \label{eq:mean-zero-Lr}
    \int_{\R^d}L_r(s,y,w)\dd w=0
    \qquad\text{for all }(s,y)\in\R^{1+d}.
\end{equation}
Hence the Riesz potential
\begin{equation*}
    G^v_{r,\sigma}(s,y,\cdot):=I_w^\sigma L_r(s,y,\cdot)
\end{equation*}
is well defined.
Moreover, we set
\begin{equation}
    \label{eq:def-bessel-G-kernel}
    G_{r,\sigma}(s,y,w)
    :=-\alpha \abs{\cc}^{-1}r^{\alpha-1-\homd}\frac{s}{r^\alpha}
    D_w^\sigma\Biggl[\psi\Bigl(\frac{s}{r^\alpha},\A_{s/r^\alpha}(r)^{-1}\binom{y}{w}\Bigr)\Biggr].
\end{equation}

\begin{prop}
    \label{prop:representation}
    Let $f,S_0 \colon \R^{1+2d} \to \R$ be sufficiently smooth and satisfy
    \begin{equation}
        (\partial_t+v\cdot\nabla_x)f=D_v^\sigma S_0.
    \end{equation}
    Then for every $\tau>0$,
    \begin{equation}
        \label{eq:representation-main}
        f(t,x,v)-[T_{K_\tau}f](t,x,v)
        =
        \Bigl[T_{\int_0^\tau G_{r,\sigma} \dd r}(S_0)+T_{\int_0^\tau G^v_{r,\sigma} \dd r}(D_v^\sigma
            f)\Bigr](t,x,v).
    \end{equation}
\end{prop}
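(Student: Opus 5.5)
The argument follows the proof of Proposition~\ref{prop:representation-formula-nonlocal} almost verbatim; only the two integrations by parts in $w$ change. Fix $(t,x,v)$. By \eqref{eq:kinmollifier}, $[T_{K_r}f](t,x,v)\to f(t,x,v)$ as $r\downarrow0$, so
\[
f(t,x,v)-[T_{K_\tau}f](t,x,v)=-\int_0^\tau\frac{\ddd}{\ddd r}[T_{K_r}f](t,x,v)\dd r .
\]
Differentiating under the integral and using \ref{item:kinetic-trajectory} together with the structural equation at $\gamma^\m(r;(t,x,v))$, we get
\[
\frac{\ddd}{\ddd r}[T_{K_r}f]=I_r^S+I_r^f .
\]
Here $I_r^S$ carries the weight $\alpha m_0 r^{\alpha-1}[D_v^\sigma S_0](\gamma^\m)$, and $I_r^f$ carries $\dot\gamma_v^\m\cdot[\nabla_vf](\gamma^\m)$.

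Next, change variables to $(s,y,w)=(t,x,v)^{-1}\circ\gamma^\m(r;(t,x,v))$ exactly as before. This gives $\psi(\m)\dd\m=K_r\dd(s,y,w)$ and $\alpha m_0r^{\alpha-1}K_r=H_r$. The forcing term becomes
\[
I^f_r=\int[\nabla_vf]\bigl((t,x,v)\circ(s,y,w)\bigr)\cdot\Theta_r\dd(s,y,w),
\]
and \eqref{eq:def-forcing-kernel} shows $\Theta_r=-\vec K_r$.

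For the source term, we use that $D^\sigma_w$ commutes with the kinetic translation in $w$. Indeed, $w\mapsto(t,x,v)\circ(s,y,w)$ only shifts the velocity variable, so $D_w^\sigma[S_0((t,x,v)\circ\cdot)]=[D_v^\sigma S_0]((t,x,v)\circ\cdot)$. Since $D^\sigma_w$ is symmetric (a real even multiplier), we can move it onto $H_r$:
\[
-I_r^S=\int S_0\bigl((t,x,v)\circ(s,y,w)\bigr)\,\bigl(-D^\sigma_wH_r\bigr)\dd(s,y,w).
\]
Comparing $-D_w^\sigma H_r$ with \eqref{eq:def-bessel-G-kernel} gives $-D_w^\sigma H_r=G_{r,\sigma}$. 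The factor $\frac{\alpha s}{r}\abs{\cc}^{-1}r^{-\homd}$ is independent of $w$ and can be pulled out, and $\frac{\alpha s}{r}=\alpha r^{\alpha-1}\frac{s}{r^\alpha}$. Hence $-I_r^S=T_{G_{r,\sigma}}S_0$.

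For the forcing term, integrate by parts in $w$:
\[
-I^f_r=\int f\bigl((t,x,v)\circ\cdot\bigr)\,\nabla_w\cdot\Theta_r=\int f\bigl((t,x,v)\circ\cdot\bigr)\,L_r .
\]
By \eqref{eq:mean-zero-Lr}, we have $L_r=D_w^\sigma I_w^\sigma L_r=D^\sigma_wG^v_{r,\sigma}$. Moving $D^\sigma_w$ back onto $f$ and using the commutation once more yields $-I_r^f=T_{G^v_{r,\sigma}}(D_v^\sigma f)$. Integrating in $r\in(0,\tau)$ and interchanging $\int_0^\tau$ with the kinetic convolution (Fubini) gives \eqref{eq:representation-main}.

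The main obstacle is justifying these nonlocal symmetry steps, because $G_{r,\sigma}$ and $G^v_{r,\sigma}$ are not compactly supported. For fixed $(s,y)$, $H_r$ is a Schwartz function in $w$ with compact support. Hence $D_w^\sigma H_r$ decays like $\abs{w}^{-d-\sigma}$, and the duality $\int D^\sigma\phi\, g=\int\phi\, D^\sigma g$ holds by Plancherel for $g=S_0$ smooth with enough decay.

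For $G^v_{r,\sigma}$, the mean-zero property together with $L_r=\nabla_w\cdot(\cdots)$ gives $\widehat{L_r}(\xi)=O(\abs{\xi})$ near $0$. Then $\abs{\xi}^{-\sigma}\widehat{L_r}$ is integrable, and $I^\sigma_wL_r$ decays like $\abs{w}^{-d-1+\sigma}$. This makes $\int f\, L_r=\int D^\sigma_v f\; G^v_{r,\sigma}$ legitimate for $f\in\L^p$ smooth with $D_v^\sigma f\in \L^p$, which is the sense of ``sufficiently smooth''.

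The remaining analytic points are routine: differentiation under the integral sign, and the limit $r\downarrow0$, where the kernels have $\L^1$ norms integrable in $r$ near $0$ by the scaling in \ref{item:kinetic-inverse-bounds}–\ref{item:kinetic-trajectory-bounds}. I would handle them as in the local case.
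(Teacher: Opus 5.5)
Your proposal is correct and follows the paper's proof essentially step by step. The steps are the fundamental theorem of calculus along the critical trajectories, the change to relative kinetic variables, and self-adjointness plus velocity-translation invariance of $D^\sigma_v$ giving $G_{r,\sigma}=-D^\sigma_w H_r$; for the forcing term, integration by parts to $L_r$ is followed by $L_r=D^\sigma_w I^\sigma_w L_r$ via the mean-zero property. One small correction to your closing remark: the $\L^1$ norm of $G_{r,\sigma}$ scales like $r^{\alpha-\sigma(\beta-\alpha)-1}$, so its integrability near $r=0$ (needed to form $\int_0^\tau G_{r,\sigma}\dd r$ and apply Fubini) uses the standing assumption $\beta<\alpha(1+1/\sigma)$, not just \ref{item:kinetic-inverse-bounds}--\ref{item:kinetic-trajectory-bounds}; the paper leaves this implicit as well.
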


\begin{proof}
    By \eqref{eq:kinmollifier} and the fundamental theorem of calculus,
    \begin{align*}
         & f(t,x,v)-[T_{K_\tau}f](t,x,v)                                                         \\
         & =-\int_{\R^{1+2d}}\int_0^\tau \frac{\dd}{\dd r}f\bigl(\gamma^{\m}(r;(t,x,v))\bigr)\dd
        r\,\psi(\m)\dd \m                                                                        \\
         & =-\int_{\R^{1+2d}}\int_0^\tau \alpha
        m_0r^{\alpha-1}\bigl[(\partial_t+v\cdot\nabla_x)f\bigr]\bigl(\gamma^{\m}(r;(t,x,v))\bigr)\dd
        r\,\psi(\m)\dd \m                                                                        \\
         & \hspace{2cm}-\int_{\R^{1+2d}}\int_0^\tau \dot\gamma_v^{\m}(r)\cdot [\nabla_v
            f]\bigl(\gamma^{\m}(r;(t,x,v))\bigr)\dd r\,\psi(\m)\dd \m.
    \end{align*}

    We treat the transport term first.
    Fix $r>0$.
    With the change of variables $\widetilde \m=\gamma^{\m}(r;(t,x,v))$ and
    property~\ref{item:kinetic-jacobian},
    \begin{align*}
         & -\int_{\R^{1+2d}} \alpha m_0 r^{\alpha-1} [D_v^\sigma
        S_0]\bigl(\gamma^{\m}(r;(t,x,v))\bigr)\psi(\m)\dd \m                                                \\
         & = -\int_{\R^{1+2d}} \alpha r^{\alpha-1}\frac{\widetilde m_0-t}{r^\alpha}\abs{\cc}^{-1}r^{-\homd}
        [D_{v}^\sigma S_0](\widetilde \m)                                                                   \\
         & \qquad \qquad\qquad\qquad \cdot \psi\Bigl(\frac{\widetilde m_0-t}{r^\alpha},
        \A_{\frac{\widetilde m_0-t}{r^\alpha}}(r)^{-1}
        \Bigl(\binom{\widetilde m_1}{\widetilde m_2}-\E_{\frac{\widetilde
                        m_0-t}{r^\alpha}}(r)\binom{x}{v}\Bigr)
        \Bigr)\dd \widetilde \m.
    \end{align*}
    Since $D_v^\sigma$ is self-adjoint and translation invariant in the velocity variable,
    \begin{align*}
         & -\int_{\R^{1+2d}} \alpha m_0 r^{\alpha-1} [D_v^\sigma
        S_0]\bigl(\gamma^{\m}(r;(t,x,v))\bigr)\psi(\m)\dd \m                     \\
         & =\int_{\R^{1+2d}} S_0(\widetilde \m)
        G_{r,\sigma}\bigl((t,x,v)^{-1}\circ \widetilde \m\bigr)\dd \widetilde \m \\
         & = [T_{G_{r,\sigma}}(S_0)](t,x,v).
    \end{align*}

    \smallskip
    Next, we study the forcing term.
    Again by the same change of variables,
    \[
        -\int_{\R^{1+2d}} \dot\gamma_v^{\m}(r)\cdot [\nabla_v f]\bigl(\gamma^{\m}(r;(t,x,v))\bigr)\psi(\m)\dd
        \m
        =[T_{\vec K_r}(\nabla_v f)](t,x,v).
    \]
    Integrating by parts in the velocity variable gives
    \[
        [T_{\vec K_r}(\nabla_v f)](t,x,v)=[T_{L_r}(f)](t,x,v),
    \]
    where $L_r$ is defined in \eqref{eq:def-Lr}.
    By \eqref{eq:mean-zero-Lr}, the function $w\mapsto L_r(s,y,w)$ has zero average for every $(s,y)$.
    Hence $G^v_{r,\sigma}=I_w^\sigma L_r$ is well defined and satisfies $D_w^\sigma G^v_{r,\sigma}=L_r$.
    Using again self-adjointness and translation invariance of $D_v^\sigma$ we obtain
    \begin{align*}
        [T_{L_r}(f)](t,x,v)
         & =\int_{\R^{1+2d}} f(\widetilde \m)L_r\bigl((t,x,v)^{-1}\circ \widetilde \m\bigr)\dd \widetilde \m
        \\
         & =\int_{\R^{1+2d}} [D_{v}^\sigma f](\widetilde \m)
        G^v_{r,\sigma}\bigl((t,x,v)^{-1}\circ \widetilde \m\bigr)\dd \widetilde \m                           \\
         & =[T_{G^v_{r,\sigma}}(D_v^\sigma f)](t,x,v).
    \end{align*}
    Combining the two contributions and integrating in $r\in(0,\tau)$ proves \eqref{eq:representation-main}.
\end{proof}

\begin{remark}
    In the case of subsolutions and supersolutions the representation formulas are replaced by the corresponding
    one-sided inequalities. In order to obtain estimates on $f$ one needs its nonnegativity as an additional
    input.
\end{remark}

\subsection{Young inequalities}
The following Young inequalities, including their weak versions, are identical to the classical ones because
the kinetic group structure
\[
    (t,x,v) \mapsto (t,x,v)\circ(s,y,w)
\]
is measure preserving; see \cite[Theorem 1.2.12 and Theorem 1.4.25]{MR2445437}.

\begin{lemma}
    \label{lem:young-nl}
    Let $1\le p,q,\theta\le\infty$ with $1/q+1=1/\theta+1/p$.
    Then for every $J\in \L^\theta(\R^{1+2d})$,
    \[
        \|T_JF\|_{\L^q(\R^{1+2d})}\le \|J\|_{\L^\theta(\R^{1+2d})}\,\|F\|_{\L^p(\R^{1+2d})}, \qquad F \in \L^p(\R^{1+2d}).
    \]
    If $1<p,q,\theta<\infty$ and $J\in \L^{\theta,\infty}(\R^{1+2d})$, then
    \[
        \|T_JF\|_{\L^q(\R^{1+2d})}\lesssim \|J\|_{\L^{\theta,\infty}(\R^{1+2d})}\,\|F\|_{\L^p(\R^{1+2d})}, \qquad F \in \L^p(\R^{1+2d}).
    \]
\end{lemma}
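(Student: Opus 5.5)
The plan is to reduce Lemma~\ref{lem:young-nl} to the classical Young inequality (and its weak-type version) on the locally compact group $G=(\R^{1+2d},\circ)$. First I will check that $G$ is unimodular with Haar measure equal to Lebesgue measure. Indeed, for fixed $\m_0=(s,y,w)$ the left translation $\m\mapsto \m_0\circ\m=(s+t,y+x+tw,w+v)$ is affine with unipotent (lower triangular, unit diagonal) linear part. Its Jacobian is therefore one. The right translation $\m\mapsto \m\circ\m_0=(t+s,x+y+sv,v+w)$ likewise has unipotent linear part in $(t,x,v)$, so its Jacobian is also one. The inversion $\m\mapsto\m^{-1}=(-t,-x+tv,-v)$ is measure preserving as well: its Jacobian matrix is block triangular with diagonal entries $\pm1$, since the $x$-component depends on $x$ only through $-x$. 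Hence Lebesgue measure is a bi-invariant Haar measure.

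Next I will rewrite $T_J$ as a group convolution. Substituting $\m=(t,x,v)\circ\mathbf n$, which is measure preserving, gives
\[
[T_JF](t,x,v)=\int_{\R^{1+2d}} F\bigl((t,x,v)\circ\mathbf n\bigr)\cdot J(\mathbf n)\dd\mathbf n .
\]
This has exactly the form $F*\check J$ on a unimodular group, with $\check J(\mathbf n)=J(\mathbf n^{-1})$. Alternatively, the strong estimate can be proved directly, with no group language. One applies Minkowski's integral inequality together with $\|F((\cdot)\circ\mathbf n)\|_{\L^p}=\|F\|_{\L^p}$, which gives the cases $\theta=1$ and $q=p$. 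Hölder's inequality gives the endpoint $q=\infty$, $\theta=p'$, where $|T_JF|\le\|J\|_{\L^{p'}}\|F\|_{\L^p}$. Riesz--Thorin interpolation in $(F,J)$, or the standard three-factor Hölder proof of Young's inequality, then yields the full range. The standard three-factor Hölder argument carries over verbatim because it only uses invariance of $\dd\m$ under $\m\mapsto(t,x,v)^{-1}\circ\m$ and under $(t,x,v)\mapsto\m\circ(t,x,v)^{-1}$, i.e.\ under integration in either variable. For vector-valued $F,J$ with values in $\R^N$, I apply the bound componentwise, or use $|F\cdot J|\le|F||J|$, losing at most a dimensional constant.

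The weak-type statement is the main point needing care. I plan to follow the classical proof of \cite[Theorem 1.4.25]{MR2445437}. Split $J=J\one_{|J|>\lambda}+J\one_{|J|\le\lambda}$. Bound the first piece by the $\L^1$ Young estimate and the second by Hölder at the $\L^\infty$ endpoint. Optimizing in $\lambda$ gives a weak-type $(p,q)$ bound for $F\mapsto T_JF$ in a range of exponents. Marcinkiewicz interpolation between two such pairs $(p_i,q_i)$, both satisfying the relation $1/q_i+1=1/\theta+1/p_i$ with the same $\theta$, upgrades this to the strong bound for $1<p<q<\infty$. The only structural input is again the invariance of Lebesgue measure under left and right kinetic translations and inversion established above. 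I expect the obstacle to be purely bookkeeping: checking that every change of variables used in Grafakos' proof is one of these measure-preserving maps, and not the commutative substitution $\m\mapsto \m-(t,x,v)$. The explicit Jacobian computation in the first step settles this.
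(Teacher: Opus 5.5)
Your proposal is correct and is essentially the paper's argument: the paper notes that the kinetic group law is measure preserving and cites Grafakos' Theorems 1.2.12 and 1.4.25. Your check that left and right translations and inversion have unit Jacobian, together with the rewriting $T_JF=F*\check J$, is exactly what justifies that citation. (The map that actually appears in the Fubini step of the three-factor H\"older argument is $z\mapsto z^{-1}\circ\m$, not $z\mapsto \m\circ z^{-1}$, but both are measure preserving by your first step, so nothing changes.)
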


\subsection{Kernel bounds}
We now estimate the kernels in the representation formula.
The compactly supported kernels $K_r$ and $L_r$ behave as in the local theory.
The new point is the effect of the fractional derivative and its inverse in the $w$-variable.
On a first reading, readers may wish to skip this subsection and identify which kernel estimates are
needed in the proofs of the main results, i.e.\ Theorems~\ref{thm:int:mainGN-differencemp}
and~\ref{thm:int:mainGN}.

We begin with bounds for the compactly supported kernels.

\begin{lemma}
    \label{lem:compact-kernels}
    For every $r>0$ the kernels $K_r$ and $L_r$ satisfy
    \[
        \supp K_r\cup\supp L_r
        \subset
        \Bigl\{(s,y,w): \abs{s}\sim r^\alpha,\ \abs{y}\lesssim r^\beta,\ \abs{w}\lesssim
        r^{\beta-\alpha}\Bigr\},
    \]
    and
    \[
        \abs{K_r(s,y,w)}\lesssim r^{-\homd},
        \qquad
        \abs{L_r(s,y,w)}\lesssim r^{-1-\homd}.
    \]
    Moreover,
    \[
        \int_{\R^d}L_r(s,y,w)\dd w=0
        \qquad\text{for all }(s,y)\in\R^{1+d}.
    \]
\end{lemma}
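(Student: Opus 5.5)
The plan is to read off all three assertions directly from the definitions of $K_r$ and $L_r$ via the change of variables $m_0=s/r^\alpha$, $(m_1,m_2)^{\top}=\A_{s/r^\alpha}(r)^{-1}(y,w)^{\top}$. Since $\psi$ is supported in $(-2,-1)\times B_1(0)\times B_1(0)$, the factor $\psi\bigl(s/r^\alpha,\A_{s/r^\alpha}(r)^{-1}\binom{y}{w}\bigr)$, which appears in both $K_r$ and $\vec K_r$, vanishes unless $m_0\in(-2,-1)$, $\abs{m_1}<1$ and $\abs{m_2}<1$. The first condition gives $\abs{s}\sim r^\alpha$. For the other two, I would write $\binom{y}{w}=\A_{m_0}(r)\binom{m_1}{m_2}$ and use the explicit form $\A_{m_0}=\D_{m_0}\cW\D_{m_0}^{-1}$. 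This yields
\[
y=g_1(r)m_1+m_0g_2(r)m_2,\qquad w=\frac{\dot g_1(r)}{\alpha m_0r^{\alpha-1}}m_1+\frac{\dot g_2(r)}{\alpha r^{\alpha-1}}m_2 .
\]
Using $\abs{g_i(r)}\le r^\beta$, $\abs{\dot g_i(r)}\lesssim r^{\beta-1}$ and $\abs{m_0}\sim1$, we get $\abs{y}\lesssim r^\beta$ and $\abs{w}\lesssim r^{\beta-\alpha}$; this is essentially the estimate \ref{item:kinetic-trajectory-bounds} with $x=v=0$. Since $L_r=-\nabla_w\cdot\vec K_r$ and differentiation does not enlarge supports, the support claim holds for $L_r$ as well.

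For the size bounds, $\abs{K_r}\le\abs{\cc}^{-1}r^{-\homd}\|\psi\|_\infty$ is immediate. For $L_r$, I write $\vec K_r=-\abs{\cc}^{-1}r^{-\homd}\,\Psi(\m)\,\cF_{m_0}(r)\binom{m_1}{m_2}$ with $\m$ as above. By the chain rule,
\[
\nabla_w\bigl[\Phi(\m)\bigr]=\bigl(\A_{m_0}(r)^{-1}\bigr)_{\cdot;2}^{\top}\nabla_{(m_1,m_2)}\Phi ,
\]
and by \ref{item:kinetic-inverse-bounds} (with $\abs{m_0}\sim1$) the relevant entries are $\lesssim r^{\alpha-\beta}$. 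Meanwhile $\abs{\cF_{m_0}(r)}\lesssim r^{\beta-\alpha-1}$, which follows from differentiating $\dot g_i/(\alpha r^{\alpha-1})\sim r^{\beta-\alpha}$. The $w$-derivative falls either on $\psi$ or on the linear factor $\binom{m_1}{m_2}$. In both cases the product is bounded by
\[
r^{-\homd}\cdot r^{\alpha-\beta}\cdot r^{\beta-\alpha-1}=r^{-1-\homd},
\]
since $\psi$, $\nabla\psi$ and $\abs{\m}$ are bounded on the support. The only point needing care is that the inverse bound is applied with $m_0$ in a compact set away from $0$, so the factors $(1+\abs{m_0}^{\pm1})$ are harmless.

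Finally, the mean-zero property follows because, for fixed $(s,y)$, the map $w\mapsto\vec K_r(s,y,w)$ is smooth and compactly supported by the first step. Hence $\int_{\R^d}\nabla_w\cdot\vec K_r\dd w=0$ by the divergence theorem, which is exactly \eqref{eq:mean-zero-Lr}.

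I expect no real obstacle here. The most delicate step is the bookkeeping of the powers of $r$ in the $L_r$ bound, in particular checking that the $r^{\alpha-\beta}$ gained from $\A^{-1}$ exactly cancels the growth $r^{\beta-\alpha-1}$ of the forcing matrix, leaving a net factor $r^{-1}$.
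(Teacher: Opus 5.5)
Your proposal is correct and follows the paper's proof. The support comes from $\supp\psi$ together with the bounds in \ref{item:kinetic-trajectory-bounds}, which you rederive directly from $\A_{m_0}=\D_{m_0}\cW\D_{m_0}^{-1}$. The bound on $L_r$ comes from pairing the factor $r^{\alpha-\beta}$ from the second block column of $\A_{m_0}(r)^{-1}$ in \ref{item:kinetic-inverse-bounds} with $\cF_{m_0}(r)=O(r^{\beta-\alpha-1})$, and the mean-zero property is the divergence theorem for the compactly supported field $\vec K_r$. One phrase is loose: deriving the bound on $\cF_{m_0}(r)$ by ``differentiating $\sim r^{\beta-\alpha}$'' is justified only because $\dot g_i(r)/(\alpha r^{\alpha-1})$ equals $r^{\beta-\alpha}$ times a smooth periodic function of $\log r$, so its $r$-derivative is $r^{\beta-\alpha-1}$ times a bounded function.
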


\begin{proof}
    The support statement follows from property~\ref{item:kinetic-trajectory-bounds}: if one of the kernels is
    non-zero, then
    \[
        \frac{s}{r^\alpha}\in(-2,-1),
        \qquad
        \A_{s/r^\alpha}(r)^{-1}\binom{y}{w}\in B_1(0)\times B_1(0),
    \]
    which yields $\abs{s}\sim r^\alpha$, $\abs{w}\lesssim r^{\beta-\alpha}$ and $\abs{y}\lesssim r^\beta$.
    The bound for $K_r$ is immediate from \eqref{eq:kernelKtau}.
    For $L_r$ we differentiate \eqref{eq:def-forcing-kernel} in $w$.
    The matrix $\cF_{s/r^\alpha}(r)$ is $O(r^{\beta-\alpha-1})$ by~\ref{item:kinetic-trajectory-bounds}, while
    differentiation of the cut-off factor contributes $r^{\alpha-\beta}$ by~\ref{item:kinetic-inverse-bounds}.
    Both terms are therefore of size $r^{-1-\homd}$.
\end{proof}

\begin{lemma}
    \label{lem:kernel-Kr}
    Let $\theta\in[1,\infty]$.
    Then, uniformly in $r>0$,
    \[
        \|K_r\|_{\L^\theta}\lesssim r^{-\homd(1-\frac1\theta)}.
    \]
\end{lemma}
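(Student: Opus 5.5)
The plan is to interpolate between an $\L^\infty$ bound and an $\L^1$ bound. Both come from Lemma~\ref{lem:compact-kernels} together with the explicit definition~\eqref{eq:kernelKtau} and the Jacobian identity~\ref{item:kinetic-jacobian}.

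\emph{The case $\theta=\infty$.} This is exactly the pointwise estimate $\abs{K_r}\lesssim r^{-\homd}$ from Lemma~\ref{lem:compact-kernels}. It follows from $\psi\in\C_c^\infty$ being bounded and the prefactor $\abs{\cc}^{-1}r^{-\homd}$.

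\emph{The case $\theta=1$.} Since $\psi\ge0$, we have $K_r\ge0$, so $\|K_r\|_{\L^1}=\int K_r$. We reverse the change of variables used in the proof of Proposition~\ref{prop:representation-formula-nonlocal}:
\[
s=m_0r^\alpha,\qquad \binom{y}{w}=\A_{m_0}(r)\binom{m_1}{m_2},\qquad \dd(s,y,w)=\abs{\cc}r^{\homd}\dd\m .
\]
The Jacobian comes from~\ref{item:kinetic-jacobian}, where $\det\A_{m_0}(r)=\cc\,r^{(2\beta-\alpha)d}$ independently of $m_0$, together with $\dd s=r^\alpha\dd m_0$. Under this change of variables,
\[
\int K_r\dd(s,y,w)=\int\psi(\m)\dd\m=1 .
\]
This is the same as identity~\eqref{eq:kinmollifier} applied to $f\equiv1$.

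\emph{Interpolation.} For $1<\theta<\infty$, Hölder's inequality gives
\[
\|K_r\|_{\L^\theta}\le \|K_r\|_{\L^\infty}^{1-\frac1\theta}\|K_r\|_{\L^1}^{\frac1\theta}\lesssim r^{-\homd(1-\frac1\theta)},
\]
uniformly in $r>0$.

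There is essentially no obstacle here. The only point needing care is the $\L^1$ normalisation. There one should check that the map $\m\mapsto(s,y,w)$ is a bijection on $\{m_0\neq0\}$ (this holds because $\A_{m_0}(r)$ is invertible for $r>0$) and that the $m_0$-dependence of $\D_{m_0}$ cancels in the determinant. Alternatively, the $\L^1$ bound can be obtained without the exact identity: by Lemma~\ref{lem:compact-kernels}, the support of $K_r$ has measure $\lesssim r^{\alpha}\cdot r^{\beta d}\cdot r^{(\beta-\alpha)d}=r^{\homd}$, which gives $\|K_r\|_{\L^1}\lesssim1$ as well.
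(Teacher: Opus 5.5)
Your proposal is correct and is essentially the paper's argument. The paper simply says the statement is a direct consequence of Lemma~\ref{lem:compact-kernels}, meaning the pointwise bound $\abs{K_r}\lesssim r^{-\homd}$ together with a support of measure $\lesssim r^{\homd}$; this is your alternative route, and your exact normalisation $\int K_r=1$ (equivalently \eqref{eq:kinmollifier} with $f\equiv1$) is an equally valid way to get the $\L^1$ bound before interpolating.
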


\begin{proof}
    This is a direct consequence of Lemma~\ref{lem:compact-kernels}.
\end{proof}

In the next technical lemma, we prove that the rescaled profiles used below are uniformly
bounded in $\C^N$ norms. We introduce the dilation
\[
    \delta_r(t,x,v)
    :=
    (r^\alpha t ,r^\beta x,r^{\beta-\alpha}v).
\]

\begin{lemma}
    \label{lem:rescaled-profile-bounds}
    Let $\mathcal I\comp \R\setminus\{0\}$ be a compact interval containing the projection of $\supp\psi$
    onto
    the time variable. For every $N\in\N$, the families
    \begin{align*}
        \widetilde H_r(\bar s, \bar y, \bar w)
         & :=
        r^{1+\homd-\alpha}H_r(\delta_r(\bar s, \bar y, \bar w)), \\
        \widetilde \Theta_r(\bar s, \bar y, \bar w)
         & :=
        r^{1+\homd-(\beta-\alpha)}\Theta_r(\delta_r(\bar s, \bar y, \bar w))
    \end{align*}
    are uniformly bounded in $\C_c^N(\R^{1+2d})$ and $\C_c^N(\R^{1+2d};\R^d)$,
    respectively. Their supports are contained in one fixed compact subset
    of $\R^{1+2d}$, independent of $r$.

    Moreover, for $\lambda\in\mathcal I$, $\bar y\in\R^d$, define
    \[
        \Phi_{r,\lambda,\bar y}(\eta)
        :=
        \psi\Bigl(
        \lambda,
        \A_\lambda(r)^{-1}
        \binom{r^\beta\bar y}{r^{\beta-\alpha}\eta}
        \Bigr),
        \qquad \eta\in\R^d,
    \]
    and
    \[
        \Psi_{r,\lambda,\bar y}(\eta)
        :=
        r^{1+\homd}
        L_r(r^\alpha\lambda,r^\beta\bar y,r^{\beta-\alpha}\eta).
    \]
    Then $\{\Phi_{r,\lambda,\bar y}\}$ is uniformly bounded in
    $\C_c^N(\R^d)$, and $\{\Psi_{r,\lambda,\bar y}\}$ is uniformly bounded
    in $\C_c^N(\R^d)$. The supports are contained in one fixed ball in
    $\R^d$. Finally,
    \[
        \int_{\R^d}\Psi_{r,\lambda,\bar y}(\eta)\dd\eta=0.
    \]
\end{lemma}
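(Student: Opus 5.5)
The plan is to rewrite each rescaled profile as the fixed cut-off $\psi$ composed with an explicit linear map, and then check that the coefficients of this map, and their inverses, are bounded uniformly in $r$ and in $\lambda\in\mathcal I$. The key observation is that the matrices $\A_\lambda(r)$ factor through the dilation $\delta_r$. Since $\cW(r)$ has first row of size $r^\beta$ and second row of size $r^{\beta-\alpha}$, we write
\[
    \cW(r)=\begin{pmatrix} r^\beta & 0\\ 0 & r^{\beta-\alpha}\end{pmatrix}\cW_0(r),
    \qquad
    \cW_0(r)=\begin{pmatrix}\sin\log r & \cos\log r\\ \alpha^{-1}(\beta\sin\log r+\cos\log r) & \alpha^{-1}(\beta\cos\log r-\sin\log r)\end{pmatrix}.
\]
Here $\cW_0(r)$ depends on $r$ only through the angle $\log r$. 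So it ranges over a compact set of matrices with constant determinant $-\alpha^{-1}$, and it and its inverse are uniformly bounded together with all their entries. The diagonal matrices $\D_\lambda$ commute with the diagonal scaling. Hence
\[
    \A_\lambda(r)^{-1}\binom{r^\beta\bar y}{r^{\beta-\alpha}\bar w}=\D_\lambda\cW_0(r)^{-1}\D_\lambda^{-1}\binom{\bar y}{\bar w}.
\]
For $\lambda\in\mathcal I$ this is a linear map $M_{r,\lambda}$ with $\|M_{r,\lambda}\|+\|M_{r,\lambda}^{-1}\|\lesssim1$, and it depends smoothly on $\lambda$ with uniformly bounded $\lambda$-derivatives.

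With this factorisation, each profile is treated in turn.

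\begin{enumerate}[label=(\roman*)]
    \item \emph{The profile $\Phi$.} We have $\Phi_{r,\lambda,\bar y}(\eta)=\psi(\lambda,M_{r,\lambda}(\bar y,\eta))$. The $\C^N$ bounds follow from the chain rule. The support bound follows from $|M^{-1}_{r,\lambda}|\lesssim1$, since $\psi$ is supported in $B_1\times B_1$; this yields $|\eta|\lesssim1$.
    \item \emph{The profile $\widetilde H_r$.} Using \eqref{eq:kernelKtau} with $s=r^\alpha\bar s$, we have $r^{\homd}K_r(\delta_r(\bar s,\bar y,\bar w))=|\cc|^{-1}\psi(\bar s,M_{r,\bar s}(\bar y,\bar w))$. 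Multiplying by $\alpha s/r=\alpha r^{\alpha-1}\bar s$ gives exactly $\widetilde H_r=\alpha|\cc|^{-1}\bar s\,\psi(\bar s,M_{r,\bar s}(\bar y,\bar w))$. This is smooth, and derivatives in $\bar s$ hit both $\psi$ and $M_{r,\bar s}$, all with uniformly bounded coefficients. The support is a fixed compact set, with $\bar s\in\mathcal I$.
    \item \emph{The profile $\widetilde\Theta_r$.} This additionally contains $B_r=\cF_{\bar s}(r)\A^{-1}(y,w)=\cF_{\bar s}(r)\,M_{r,\bar s}(\bar y,\bar w)$. Computing $\partial_r(\dot g_i/(\alpha r^{\alpha-1}))$, we get $r^{\beta-\alpha-1}$ times a bounded trigonometric expression in $\log r$. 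Combined with $1/\bar s$ for $\bar s\in\mathcal I$, this gives $B_r=r^{\beta-\alpha-1}\widetilde B_r$ with $\widetilde B_r$ smooth and uniformly bounded in $\C^N$ on the support. The prefactor $r^{1+\homd-(\beta-\alpha)}$ exactly compensates the factors $r^{-\homd}$ and $r^{\beta-\alpha-1}$.
    \item \emph{The profile $\Psi$.} Since $L_r=-\nabla_w\cdot\vec K_r$ and $\nabla_w=r^{\alpha-\beta}\nabla_\eta$ under the rescaling, we get $\Psi_{r,\lambda,\bar y}=-\nabla_\eta\cdot\widetilde\Theta_r(\lambda,\bar y,\cdot)$, up to the sign and $|\cc|$ normalisation in \eqref{eq:def-forcing-kernel}. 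Its powers of $r$ are again exactly $r^{1+\homd}\cdot r^{-\homd}\cdot r^{\beta-\alpha-1}\cdot r^{\alpha-\beta}=1$. Uniform $\C^N$ bounds then follow from those of $\widetilde\Theta_r$, upgraded to $\C^{N+1}$. The zero mean follows because $\Psi$ is the divergence of a compactly supported field, as in \eqref{eq:mean-zero-Lr}.
\end{enumerate}

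The main point to verify carefully is the uniformity in $r$ of $\cW_0(r)^{-1}$ and of the rescaled forcing matrix. Both reduce to the observation that everything depends on $r$ only through the bounded periodic functions $\sin\log r$ and $\cos\log r$, and that $\det\cW_0$ is a nonzero constant, consistent with \ref{item:kinetic-jacobian}. No derivatives in $r$ are needed, so the oscillation in $\log r$ causes no difficulty. Uniformity over $\lambda$ uses only that $\mathcal I$ is compact and avoids $0$, which controls $\D_\lambda^{\pm1}$ and the factor $1/m_0$ in $\cF$.
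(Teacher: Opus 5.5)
Your proposal is correct and follows essentially the same route as the paper. Both arguments factor $\A_\lambda(r)=\mathrm{diag}(r^\beta,r^{\beta-\alpha})\,\mathscr A_\lambda(\log r)$ with a $\log r$-periodic matrix of constant determinant $-1/\alpha$, write $\cF_\lambda(r)=r^{\beta-\alpha-1}\mathscr F_\lambda(\log r)$, apply the chain rule, and obtain $\Psi$ as a divergence of the rescaled forcing kernel (your sign hedge is unnecessary: since $\vec K_r=-\Theta_r$, one has exactly $\Psi_{r,\lambda,\bar y}=\nabla_\eta\cdot\widetilde\Theta_r(\lambda,\bar y,\cdot)$, with the $\abs{\cc}^{-1}$ normalisation already contained in $\widetilde\Theta_r$).
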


\begin{proof}
    Write $\vartheta=\log r$, and set
    \[
        a_1(\vartheta)
        :=
        \frac{1}{\alpha}\bigl(\beta\sin\vartheta+\cos\vartheta\bigr),
        \qquad
        a_2(\vartheta)
        :=
        \frac{1}{\alpha}\bigl(\beta\cos\vartheta-\sin\vartheta\bigr).
    \]
    With
    \[
        P_r:=
        \begin{pmatrix}
            r^\beta & 0                \\
            0       & r^{\beta-\alpha}
        \end{pmatrix},
    \]
    the matrix $\A_\lambda(r)$ factorises as
    \[
        \A_\lambda(r)
        =
        P_r\mathscr A_\lambda(\vartheta),
    \]
    where
    \[
        \mathscr A_\lambda(\vartheta)
        :=
        \begin{pmatrix}
            \sin\vartheta
             &
            \lambda\cos\vartheta
            \\[0.1cm]
            \lambda^{-1}a_1(\vartheta)
             &
            a_2(\vartheta)
        \end{pmatrix}.
    \]
    For each one-dimensional block,
    \[
        \det \mathscr A_\lambda(\vartheta)
        =
        \sin\vartheta\,a_2(\vartheta)
        -
        \cos\vartheta\,a_1(\vartheta)
        =
        -\frac1\alpha.
    \]
    Hence $\mathscr A_\lambda(\vartheta)$ is invertible uniformly for
    $\lambda\in\mathcal I$ and $\vartheta\in\R$. Since the coefficients
    are smooth in $\lambda$ and periodic in $\vartheta$, all
    $\lambda$- and $\vartheta$-derivatives of
    $\mathscr A_\lambda(\vartheta)^{-1}$ are uniformly bounded on
    $\mathcal I\times\R$.

    Similarly, from the definition of $\cF_\lambda(r)$,
    \[
        \cF_\lambda(r)
        =
        r^{\beta-\alpha-1}\mathscr F_\lambda(\vartheta),
    \]
    where
    \[
        \mathscr F_\lambda(\vartheta)
        :=
        \begin{pmatrix}
            \lambda^{-1}\bigl((\beta-\alpha)a_1(\vartheta)+a_1'(\vartheta)\bigr)
             &
            \bigl((\beta-\alpha)a_2(\vartheta)+a_2'(\vartheta)\bigr)
        \end{pmatrix}.
    \]
    Again, all derivatives of $\mathscr F_\lambda$ are uniformly bounded
    for $\lambda\in\mathcal I$.

    Define
    \[
        Z_{r,\lambda}(\bar y,\bar w)
        :=
        \A_\lambda(r)^{-1}
        \binom{r^\beta\bar y}{r^{\beta-\alpha}\bar w}
        =
        \mathscr A_\lambda(\vartheta)^{-1}
        \binom{\bar y}{\bar w},
    \]
    and
    \[
        V_{r,\lambda}(\bar y,\bar w)
        :=
        r^{1+\alpha-\beta}
        \cF_\lambda(r)\A_\lambda(r)^{-1}
        \binom{r^\beta\bar y}{r^{\beta-\alpha}\bar w}
        =
        \mathscr F_\lambda(\vartheta)
        \mathscr A_\lambda(\vartheta)^{-1}
        \binom{\bar y}{\bar w}.
    \]
    On bounded subsets of the $(\bar y,\bar w)$-variables, the maps
    $Z_{r,\lambda}$ and $V_{r,\lambda}$, together with all derivatives
    of any fixed order, are uniformly bounded.

    We now write the rescaled kernels explicitly. By the definition of
    $H_r$ and $K_r$,
    \[
        \widetilde H_r(\bar s,\bar y,\bar w)
        =
        \alpha\abs{\cc}^{-1}\bar s\,
        \psi\bigl(\bar s,Z_{r,\bar s}(\bar y,\bar w)\bigr).
    \]
    Likewise, since $\Theta_r=B_rK_r$,
    \[
        \widetilde \Theta_r(\bar s,\bar y,\bar w)
        =
        \abs{\cc}^{-1}
        V_{r,\bar s}(\bar y,\bar w)\,
        \psi\bigl(\bar s,Z_{r,\bar s}(\bar y,\bar w)\bigr).
    \]
    The support of $\psi$ forces $\bar s\in\mathcal I$ and
    $Z_{r,\bar s}(\bar y,\bar w)$ to remain in a fixed compact set.
    Since $\mathscr A_{\bar s}(\vartheta)$ is uniformly bounded on
    $\mathcal I\times\R$, this implies that $(\bar y,\bar w)$ remains in
    a fixed compact set. The uniform $\C^N$-bounds for
    $\widetilde H_r$ and $\widetilde \Theta_r$ follow from the chain rule and
    the bounds above.

    The same argument gives the uniform $\C_c^N$-bounds for
    \[
        \Phi_{r,\lambda,\bar y}(\eta)
        =
        \psi\bigl(\lambda,Z_{r,\lambda}(\bar y,\eta)\bigr).
    \]

    Finally, define the rescaled vector kernel
    \[
        \widetilde{\vec K}_r(\bar s, \bar y, \bar w)
        :=
        r^{1+\homd-(\beta-\alpha)}
        \vec K_r(\delta_r(\bar s, \bar y, \bar w)).
    \]
    By the explicit formula for $\vec K_r$, the same computation as for
    $\widetilde \Theta_r$ gives uniform $\C_c^N$-bounds for
    $\widetilde{\vec K}_r$, for every $N$. Since
    $L_r=-\nabla_w\cdot\vec K_r$, we have
    \[
        \Psi_{r,\lambda,\bar y}(\eta)
        =
        -\nabla_\eta\cdot
        \widetilde{\vec K}_r(\lambda,\bar y,\eta).
    \]
    Therefore $\Psi_{r,\lambda,\bar y}$ is uniformly bounded in
    $\C_c^N(\R^d)$, with support in one fixed ball. Its zero-average
    property follows by integrating the divergence of the compactly
    supported vector field:
    \[
        \int_{\R^d}\Psi_{r,\lambda,\bar y}(\eta)\dd\eta
        =
        -\int_{\R^d}
        \nabla_\eta\cdot
        \widetilde{\vec K}_r(\lambda,\bar y,\eta)
        \dd\eta
        =
        0.
    \]
\end{proof}

\subsubsection{Domination and kernel bounds I}

We now turn to the equation \eqref{eq:kinetic-frac-p}, namely to the
definition of the nonlocal diffusion through differences.

For an $h$-dependent function $F=F(t,x,v,h)$ and $1<s<\infty$ set
\[
    \mathcal N_sF(t,x,v)
    :=\left(\int_{\R^d}\abs{F(t,x,v,h)}^s\dd\eta(h)\right)^{1/s}
\]
and
\[
    \mathcal N_s^\#F(t,x,v)
    :=\left(\int_{\R^d}\abs{F(t,x,v-h,h)}^s\dd\eta(h)\right)^{1/s}.
\]
The change of variables $v\mapsto v-h$ gives
\begin{equation}
    \label{eq:Ns-sharp-norm}
    \|\mathcal N_sF\|_{\L^s(\R^{1+2d})}
    =
    \|\mathcal N_s^\#F\|_{\L^s(\R^{1+2d})}
    =
    \|F\|_{\L^s(\dd\eta\otimes\dd(t,x,v))}.
\end{equation}

\begin{lemma}
    \label{lem:single-scale-gagliardo-kernels}
    Let $0<\sigma<1$ and $0<\alpha<\beta<\alpha(1+1/\sigma)$.
    For every $1<s<\infty$ there exist nonnegative kernels $P_r,Q_r$ such that
    \begin{equation}
        \label{eq:PQ-L1-Linfty}
        \|P_r\|_{\L^1}+\|Q_r\|_{\L^1}\lesssim 1,
        \qquad
        \|P_r\|_{\L^\infty}+\|Q_r\|_{\L^\infty}\lesssim  r^{-\homd},
    \end{equation}
    and for every sufficiently smooth function $F = F(t,x,v,h) \colon \R^{1+2d} \times \R^d \to \R$,
    \begin{equation*}
        \abs{[T_{\frG_{r,\sigma}}^\eta F](t,x,v)}
        \lesssim
        r^{\alpha-\sigma(\beta-\alpha)-1}
        \Bigl([T_{P_r}(\mathcal N_sF)](t,x,v)+[T_{P_r}(\mathcal N_s^\#F)](t,x,v)\Bigr),
    \end{equation*}
    and
    \begin{equation}
        \label{eq:single-scale-v-kernel}
        \abs{[T_{\frG^v_{r,\sigma}}^\eta F](t,x,v)}
        \lesssim
        r^{\sigma(\beta-\alpha)-1}
        \Bigl([T_{Q_r}(\mathcal N_sF)](t,x,v)+[T_{Q_r}(\mathcal N_s^\#F)](t,x,v)\Bigr)
    \end{equation}
    for all $(t,x,v) \in \R^{1+2d}$.
\end{lemma}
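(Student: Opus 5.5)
Both kernel estimates follow the same scheme. First I obtain pointwise bounds on $\frG_{r,\sigma}$ and $\frG^v_{r,\sigma}$ as functions of $(s,y,w,h)$, written in terms of the rescaled profiles of Lemma~\ref{lem:rescaled-profile-bounds}. Then I apply Hölder's inequality in $h$ with respect to $\dd\eta$ at each fixed $\m$. This produces $\mathcal N_sF$ or $\mathcal N_s^\#F$ multiplied by an $\L^{s'}(\dd\eta)$-norm of the kernel, and that norm must in turn be dominated by a kernel $P_r$ or $Q_r$ with the $\L^1$/$\L^\infty$ bounds~\eqref{eq:PQ-L1-Linfty}.

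For $\frG_{r,\sigma}$ I use the scaling $H_r(\delta_r\cdot)=r^{\alpha-1-\homd}\widetilde H_r$. I split the increments at $\abs{h}\le r^{\beta-\alpha}$ versus $\abs{h}>r^{\beta-\alpha}$.
\begin{itemize}
\item For small $h$, the mean value theorem gives $\abs{H_r(s,y,w+h)-H_r(s,y,w)}\lesssim r^{\alpha-1-\homd}\abs{h}r^{\alpha-\beta}$. This is supported where $(s,y)$ lies in the support box and $\abs{w}\lesssim r^{\beta-\alpha}$, using that $\abs{h}$ is small.
\item For large $h$, I bound the difference by the two separate terms $\abs{H_r(s,y,w+h)}+\abs{H_r(s,y,w)}$.
\end{itemize}
The term $H_r(s,y,w)$ is paired with $F(\m,h)$ and gives $\mathcal N_sF$. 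For the term $H_r(s,y,w+h)$, I substitute $w\mapsto w-h$, which turns $F(\m,h)$ into $F(\cdot,v-h,h)$ evaluated at the shifted point. This is exactly why $\mathcal N_s^\#$ appears.

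In each piece, Hölder in $h$ gives
\[
\Bigl(\int_{\abs{h}\le r^{\beta-\alpha}}\abs{h}^{(1-\sigma)s'-d}\dd h\Bigr)^{1/s'}r^{\alpha-\beta}
\quad\text{and}\quad
\Bigl(\int_{\abs{h}>r^{\beta-\alpha}}\abs{h}^{-\sigma s'-d}\dd h\Bigr)^{1/s'},
\]
and both are $\sim r^{-\sigma(\beta-\alpha)}$ since $0<\sigma<1$. Together with the prefactor $r^{\alpha-1-\homd}$, this gives $r^{\alpha-1-\sigma(\beta-\alpha)}\cdot r^{-\homd}\one_{\text{box}}$. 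I take $P_r:=r^{-\homd}\one_{\text{box}}$, with the box of volume $\sim r^{\homd}$, so that~\eqref{eq:PQ-L1-Linfty} holds.

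The kernel $\frG^v_{r,\sigma}$ is harder and is the main obstacle. It equals $\Dfr_w$ applied to $\Phi:=(c^{\mathrm{Gag}})^{-1}(-\Delta_w)^{-\sigma}\nabla_w\cdot\Theta_r$. Since $\nabla_w\cdot\Theta_r$ has zero mean and is compactly supported, $\Phi(s,y,\cdot)$ is a nonlocal, non-compactly supported function of $w$. I work at unit scale through Lemma~\ref{lem:rescaled-profile-bounds}. The function $g:=\nabla_{\bar w}\cdot\widetilde\Theta_r$ is uniformly smooth, supported in a fixed ball, and mean zero. Hence $\phi:=(-\Delta)^{-\sigma}g$ behaves like a Riesz potential of order $2\sigma-1$ applied to a smooth compactly supported function, and satisfies
\[
\abs{\phi(\eta)}+\abs{\nabla\phi(\eta)}\lesssim(1+\abs{\eta})^{-(d+1-2\sigma)}.
\]
I would prove this on the Fourier side, where the multiplier $\abs{\xi}^{-2\sigma}\xi$ is homogeneous of degree $1-2\sigma>-1$, or via the kernel with one derivative moved onto it. The borderline cases $2\sigma\ge d$ also need care.

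Undoing the scaling, the $w$-derivative costs $r^{\alpha-\beta}$ and $(-\Delta_w)^{-\sigma}$ gains $r^{2\sigma(\beta-\alpha)}$. The overall prefactor is therefore $r^{-1-\homd+(\beta-\alpha)}r^{-(\beta-\alpha)}r^{2\sigma(\beta-\alpha)}$, and the increment $\Dfr$ then contributes $r^{-\sigma(\beta-\alpha)}$. The net result is $r^{\sigma(\beta-\alpha)-1}$ times a profile $r^{-\homd}\rho(\bar w)$ with $\rho$ decaying like $(1+\abs{\bar w})^{-(d+1-2\sigma)}$ and $(s,y)$ confined to a box.

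The increment is split as before: the mean value theorem for small $h$, and the two terms with the shift $w\mapsto w-h$ for large $h$. I take
\[
Q_r:=r^{-\homd}\one_{\abs{s}\sim r^\alpha,\ \abs{y}\lesssim r^\beta}\,(1+\abs{w}/r^{\beta-\alpha})^{-(d+1-2\sigma)}.
\]
The obstacle is that this is $\L^1$ in $w$ only when $1-2\sigma>0$. For $\sigma\ge1/2$ I expect to use a finer estimate. The first option is to keep the difference structure for large $w$, since the decay improves after $\nabla\phi$ is taken, and to use the mean value theorem even when $\abs{h}\lesssim\abs{w}$. The alternative is to place the tail into the $\mathcal N^\#$ term using the $x$-localisation. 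I expect the hypothesis $\beta<\alpha(1+1/\sigma)$ to enter here, so that the weighted tail remains integrable after the kinetic shear.
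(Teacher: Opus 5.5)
Your estimate for $\frG_{r,\sigma}$ is correct and matches the paper's argument: a fixed split at $\abs{h}\sim r^{\beta-\alpha}$, the mean value theorem below it, the two-term bound above it with $w\mapsto w-h$ producing $\mathcal N_s^\#F$, and $P_r=r^{-\homd}\one_{\mathrm{box}}$. Your scaling bookkeeping for $\frG^v_{r,\sigma}$ is also right.

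The gap is the one you flag yourself. Write $W=w/r^{\beta-\alpha}$, $H=h/r^{\beta-\alpha}$, and let $\widetilde U_r$ be the rescaled potential. With the fixed splitting scale, the large-$H$ term $\abs{H}^{-\sigma}\abs{\widetilde U_r(W)}$ only gives decay $(1+\abs{W})^{-(d+1-2\sigma)}$, which is not integrable for $\sigma\ge 1/2$. As written, your argument therefore proves the lemma only for $\sigma<1/2$. A better bound on $\widetilde U_r$ alone will not fix this. The rate $d+1-2\sigma$ is generic, because $\nabla\cdot\widetilde\Theta_r$ has zero mean but in general nonzero first moments. Your second remedy also does not work. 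The kernel $Q_r$ must be integrable in $w$ for each fixed $(s,y)$, so $x$-localisation cannot help. The hypothesis $\beta<\alpha(1+1/\sigma)$ plays no role in this single-scale lemma; it is used only later, to ensure $\alpha-\sigma(\beta-\alpha)>0$ in the critical integration of the source kernel.

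What is missing is what the paper does, and your first remedy only gestures at it. It needs two ingredients.

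\emph{Faster gradient decay.} You need $\abs{\nabla_W\widetilde U_r}\lesssim(1+\abs{W})^{-d-2+2\sigma}$. This holds because the kernel of $(-\Delta)^{-\sigma}\partial_j$ is homogeneous of degree $-d-1+2\sigma$ (Lemma~\ref{lem:potential-divergence-decay}). You stated the same rate for $\phi$ and $\nabla\phi$, which is not enough.

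\emph{A $W$-dependent splitting scale.} Split at $\abs{H}\le L_W:=(1+\abs{W})/2$.
\begin{enumerate}
\item On $\abs{H}\le L_W$, every point $\xi$ of the segment $[W,W+H]$ satisfies $1+\abs{\xi}\ge(1+\abs{W})/2$. The mean value theorem and H\"older then give $\bigl(\int_{\abs{H}\le L_W}\abs{H}^{(1-\sigma)s'}\dd\eta(H)\bigr)^{1/s'}(1+\abs{W})^{-d-2+2\sigma}\sim(1+\abs{W})^{-d-1+\sigma}$.
\item On $\abs{H}>L_W$, the H\"older factor $\bigl(\int_{\abs{H}>L_W}\abs{H}^{-\sigma s'}\dd\eta(H)\bigr)^{1/s'}\sim(1+\abs{W})^{-\sigma}$ is no longer a constant. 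It upgrades $(1+\abs{W})^{-d-1+2\sigma}$ to $(1+\abs{W})^{-d-1+\sigma}$.
\item For the $W+H$ term, change variables $w'=w+h$. Since $\abs{H}>L_W$ implies $\abs{H}>(1+\abs{W'})/3$, you get the same gain in $W'$, now acting on $\mathcal N_s^\#F$.
\end{enumerate}

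Both ingredients are needed. With only one of them you get at best $(1+\abs{W})^{-d+\sigma}$ or $(1+\abs{W})^{-d-1+2\sigma}$. Together they give $Q_r=r^{-\homd}\one_E(s/r^\alpha,y/r^\beta)(1+\abs{w}/r^{\beta-\alpha})^{-d-1+\sigma}$. This is integrable for every $\sigma\in(0,1)$ because $d+1-\sigma>d$.
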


\begin{proof}
    Put $R:=r^{\beta-\alpha}$.  By Lemma~\ref{lem:rescaled-profile-bounds},
    \[
        H_r(s,y,w)=r^{\alpha-1-\homd}\widetilde H_r(\delta_r^{-1}(s,y,w)),
    \]
    where $\widetilde H_r$ is uniformly bounded in $\C_c^1$.  Therefore, with
    $W:=R^{-1}w$ and $H:=R^{-1}h$,
    \[
        \abs{\frG_{r,\sigma}(s,y,w,h)}
        \le
        C r^{\alpha-1-\homd}R^{-\sigma}\,
        \Omega_r\Bigl(\frac{s}{r^\alpha},\frac{y}{r^\beta},W,H\Bigr),
    \]
    where $\Omega_r$ is supported in a fixed compact set of the
    $(\bar s,\bar y)$ variables and satisfies
    \[
        \begin{aligned}
            \Omega_r(\bar s,\bar y,W,H)
             & \le C\Bigl(
            \abs{H}^{1-\sigma}\one_{\{\abs{H}\le1,\ \abs{W}\le C\}}
            +\abs{H}^{-\sigma}\one_{\{\abs{H}>1,\ \abs{W}\le C\}}
            \\
             & \hphantom{\le C\Bigl(}
            +\abs{H}^{-\sigma}\one_{\{\abs{H}>1,\ \abs{W+H}\le C\}}
            \Bigr).
        \end{aligned}
    \]
    The first term follows from the mean-value theorem, and the last two from
    the trivial difference bound.

    Let $s'=s/(s-1)$. Let $E\subset \R^{1+d}$ be a fixed compact set containing the support of
    $\Omega_r$ in the $(\bar s,\bar y)$-variables, and choose
    $C_0\ge 1$ so that the above bound for $\Omega_r$ holds with
    $C_0$ in place of the implicit constants.  Thus, after increasing the
    constant $C$ if necessary,
    \[
        \Omega_r(\bar s,\bar y,W,H)
        \le
        C\one_E(\bar s,\bar y)
        \bigl(\Omega^{(1)}(W,H)+\Omega^{(2)}(W,H)+\Omega^{(3)}(W,H)\bigr),
    \]
    where
    \[
        \Omega^{(1)}(W,H)
        :=
        \abs{H}^{1-\sigma}\one_{\{\abs{H}\le 1,\ \abs{W}\le C_0\}},
    \]
    \[
        \Omega^{(2)}(W,H)
        :=
        \abs{H}^{-\sigma}\one_{\{\abs{H}> 1,\ \abs{W}\le C_0\}},
    \]
    and
    \[
        \Omega^{(3)}(W,H)
        :=
        \abs{H}^{-\sigma}\one_{\{\abs{H}> 1,\ \abs{W+H}\le C_0\}}.
    \]
    Since $\dd\eta(h)=\dd\eta(H)$ under the change of variables
    $h=RH$, Hölder's inequality in $H$ gives, for fixed
    $(s,y,w)$,
    \begin{align*}
         & \int_{\R^d}
        \abs{F((t,x,v)\circ (s,y,w),h)}
        \Omega^{(1)}(W,H)\dd\eta(h) \\
         & \le
        \mathcal N_sF((t,x,v)\circ (s,y,w))
        \one_{\{\abs{W}\le C_0\}}
        \left(
        \int_{\abs{H}\le 1}
        \abs{H}^{(1-\sigma)s'}\dd\eta(H)
        \right)^{1/s'} .
    \end{align*}
    The last factor is finite, because
    $\dd\eta(H)=\abs{H}^{-d}\dd H$ and $0<\sigma<1$.  Similarly,
    \begin{align*}
         & \int_{\R^d}
        \abs{F((t,x,v)\circ (s,y,w),h)}
        \Omega^{(2)}(W,H)\dd\eta(h) \\
         & \le
        \mathcal N_sF((t,x,v)\circ (s,y,w))
        \one_{\{\abs{W}\le C_0\}}
        \left(
        \int_{\abs{H}>1}
        \abs{H}^{-\sigma s'}\dd\eta(H)
        \right)^{1/s'} ,
    \end{align*}
    and this last factor is finite as well.  Hence the first two pieces of
    $\Omega_r$ are bounded by
    \[
        C\,
        \one_E\left(\frac{s}{r^\alpha},\frac{y}{r^\beta}\right)
        \one_{\{\abs{w}\le C_0R\}}
        \mathcal N_sF((t,x,v)\circ(s,y,w)).
    \]

    It remains to treat the third piece.  In the contribution of
    $\Omega^{(3)}$, make the change of variables
    \[
        w'=w+h,
        \qquad
        W'=W+H.
    \]
    Since $h=RH$, this change has unit Jacobian in the $w$-variable, and
    the support condition $\abs{W+H}\le C_0$ becomes
    $\abs{W'}\le C_0$.  Moreover,
    \[
        (t,x,v)\circ(s,y,w)
        =
        \bigl(t+s,x+y+sv,v+w'-h\bigr),
    \]
    while
    \[
        (t,x,v)\circ(s,y,w')
        =
        \bigl(t+s,x+y+sv,v+w'\bigr).
    \]
    Thus the function $F$ is replaced by the shifted expression appearing
    in $\mathcal N_s^\#F$.  Applying Hölder in $H$ gives
    \begin{align*}
         & \int_{\R^d}
        \abs{F(t+s,x+y+sv,v+w'-h,h)}
        \abs{H}^{-\sigma}\one_{\{\abs{H}>1\}}\dd\eta(h) \\
         & \le
        C\,
        \mathcal N_s^\#F((t,x,v)\circ(s,y,w')),
    \end{align*}
    because
    \[
        \left(
        \int_{\abs{H}>1}
        \abs{H}^{-\sigma s'}\dd\eta(H)
        \right)^{1/s'}
        <\infty.
    \]
    Therefore the third piece produces the same compactly supported kernel,
    now acting on $\mathcal N_s^\#F$.

    Define
    \[
        P_r^0(\bar s,\bar y,W)
        :=
        \one_E(\bar s,\bar y)\one_{\{\abs{W}\le C_0\}},
    \]
    and
    \[
        P_r(s,y,w)
        :=
        r^{-\homd}
        P_r^0\left(\frac{s}{r^\alpha},
        \frac{y}{r^\beta},
        \frac{w}{r^{\beta-\alpha}}\right)
        =
        r^{-\homd}P_r^0(\delta_r^{-1}(s,y,w)).
    \]
    Combining the three contributions yields
    \[
        \abs{[T_{\frG_{r,\sigma}}^\eta F](t,x,v)}
        \le
        C r^{\alpha-\sigma(\beta-\alpha)-1}
        \Bigl(
        [T_{P_r}(\mathcal N_sF)](t,x,v)
        +
        [T_{P_r}(\mathcal N_s^\#F)](t,x,v)
        \Bigr).
    \]
    The family $P_r^0$ is uniformly bounded and supported in the fixed
    compact set $E\times B_{C_0}$.  Hence
    \[
        \|P_r\|_{\L^1}\le C,
        \qquad
        \|P_r\|_{\L^\infty}\le C r^{-\homd},
    \]
    which is the $P_r$-part of \eqref{eq:PQ-L1-Linfty}.

    We next treat $\frG^v_{r,\sigma}$.  Let
    \[
        U_r(s,y,w):=\frac1{c_{d,\sigma}^{\mathrm{Gag}}}
        (-\Delta_w)^{-\sigma}(\nabla_w\!\cdot\Theta_r)(s,y,w),
    \]
    so that $\frG^v_{r,\sigma}=\Dfr_w U_r$.  Again by
    Lemma~\ref{lem:rescaled-profile-bounds},
    \[
        \Theta_r(s,y,w)=r^{\beta-\alpha-1-\homd}
        \widetilde\Theta_r(\delta_r^{-1}(s,y,w)),
    \]
    with $\widetilde\Theta_r$ uniformly bounded in $\C_c^N$ for every fixed
    $N$.  Hence
    \[
        U_r(\delta_r(\bar s, \bar y, \bar w))
        =r^{-1-\homd+2\sigma(\beta-\alpha)}\widetilde U_r(\bar s, \bar y, \bar w),
        \qquad
        \widetilde U_r:=\frac1{c_{d,\sigma}^{\mathrm{Gag}}}
        (-\Delta_{\bar w})^{-\sigma}
        (\nabla_{\bar w}\!\cdot\widetilde\Theta_r).
    \]
    The kernel bounds of Lemma~\ref{lem:potential-divergence-decay} for the potential
    $(-\Delta_{\bar w})^{-\sigma}\nabla_{\bar w}\cdot$ applied to the uniformly
    compactly supported $\C^N$ family give, uniformly in $r$,
    \begin{equation}
        \label{eq:Ur-profile-bounds}
        \abs{\widetilde U_r(\bar s,\bar y,W)}
        \le
        C\one_E(\bar s,\bar y)(1+\abs{W})^{-d-1+2\sigma},
    \end{equation}
    and
    \begin{equation}
        \label{eq:grad-Ur-profile-bounds}
        \abs{\nabla_W\widetilde U_r(\bar s,\bar y,W)}
        \le
        C\one_E(\bar s,\bar y)(1+\abs{W})^{-d-2+2\sigma},
    \end{equation}
    where $E\subset\R^{1+d}$ is fixed and compact.

    Therefore
    \[
        \abs{\frG^v_{r,\sigma}(s,y,w,h)}
        \le
        C r^{-1-\homd+\sigma(\beta-\alpha)}
        \frac{\abs{\widetilde U_r(\bar s,\bar y,W+H)
                -\widetilde U_r(\bar s,\bar y,W)}}{\abs{H}^\sigma},
    \]
    where $(\bar s,\bar y,W)=(s/r^\alpha,y/r^\beta,w/R)$ and $H=h/R$.
    Set $L_W:=(1+\abs{W})/2$.  If $\abs{H}\le L_W$, then the mean-value
    theorem and \eqref{eq:grad-Ur-profile-bounds} imply
    \[
        \frac{\abs{\widetilde U_r(W+H)-\widetilde U_r(W)}}{\abs{H}^\sigma}
        \le
        C\abs{H}^{1-\sigma}(1+\abs{W})^{-d-2+2\sigma}.
    \]
    If $\abs{H}>L_W$, then \eqref{eq:Ur-profile-bounds} gives
    \[
        \frac{\abs{\widetilde U_r(W+H)-\widetilde U_r(W)}}{\abs{H}^\sigma}
        \le
        C\abs{H}^{-\sigma}
        \Bigl((1+\abs{W})^{-d-1+2\sigma}
        +(1+\abs{W+H})^{-d-1+2\sigma}\Bigr).
    \]
    Let $s'=s/(s-1)$.  Since
    \[
        \left(\int_{\abs{H}\le L_W}\abs{H}^{(1-\sigma)s'}\dd\eta(H)
        \right)^{1/s'}
        \le C(1+\abs{W})^{1-\sigma}
    \]
    and
    \[
        \left(\int_{\abs{H}> L_W}\abs{H}^{-\sigma s'}\dd\eta(H)
        \right)^{1/s'}
        \le C(1+\abs{W})^{-\sigma},
    \]
    the terms depending on $W$ give, after H\"older's inequality in $H$,
    the kernel
    \[
        r^{-\homd}\one_E\left(\frac{s}{r^\alpha},\frac{y}{r^\beta}\right)
        \left(1+\frac{\abs{w}}{R}\right)^{-d-1+\sigma}.
    \]
    For the term depending on $W+H$ we change variables $w'=w+h$.  Then
    $W'=W+H$, and the condition $\abs{H}>(1+\abs{W})/2$ implies $\abs{H}\ge c(1+\abs{W'})$.
    Hence
    \[
        \left(\int_{\abs{H}\ge c(1+\abs{W'})}\abs{H}^{-\sigma s'}\dd\eta(H)
        \right)^{1/s'}
        \le C(1+\abs{W'})^{-\sigma},
    \]
    and this gives the same kernel with $w'$ in place of $w$, acting on
    $\mathcal N_s^\#F$.

    Thus \eqref{eq:single-scale-v-kernel} holds with
    \[
        Q_r(s,y,w)
        :=r^{-\homd}
        \one_E\left(\frac{s}{r^\alpha},\frac{y}{r^\beta}\right)
        \left(1+\frac{\abs{w}}{r^{\beta-\alpha}}\right)^{-d-1+\sigma}.
    \]
    Because $d+1-\sigma>d$, this kernel has uniformly bounded $\L^1$ norm,
    and its $\L^\infty$ norm is bounded by $Cr^{-\homd}$.  This proves
    \eqref{eq:PQ-L1-Linfty} and finishes the proof.
\end{proof}

\begin{lemma}
    \label{lem:integrated-gagliardo-kernels}
    Let $0<\sigma<1$, $1<p<\infty$, and
    $0<\alpha<\beta<\alpha(1+1/\sigma)$.  Then, uniformly in $\tau>0$,
    \begin{equation}
        \label{eq:integrated-v-estimate}
        \left\|T_{\int_0^\tau \frG^v_{r,\sigma}\dd r}^\eta F\right\|_{\L^q}
        \le C\|F\|_{\L^p(\dd\eta\otimes\dd(t,x,v))}
    \end{equation}
    whenever $1<q<\infty$ and
    \[
        \frac1q=\frac1p-\frac{\sigma(\beta-\alpha)}{\homd}.
    \]
    Moreover,
    \begin{equation}
        \label{eq:integrated-source-estimate}
        \left\|T_{\int_0^\tau \frG_{r,\sigma}\dd r}^\eta F\right\|_{\L^q}
        \le C\|F\|_{\L^{p'}(\dd\eta\otimes\dd(t,x,v))}
    \end{equation}
    whenever $1<q<\infty$ and
    \[
        \frac1q=\frac1{p'}-\frac{\alpha-\sigma(\beta-\alpha)}{\homd}.
    \]
\end{lemma}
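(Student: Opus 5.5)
The plan is to combine the single-scale domination of Lemma~\ref{lem:single-scale-gagliardo-kernels} with a weak-type (Hardy--Littlewood--Sobolev type) estimate for the integrated kernels. Then Lemma~\ref{lem:young-nl} in its weak form does the rest. We treat \eqref{eq:integrated-v-estimate}; the estimate \eqref{eq:integrated-source-estimate} is identical, with $p$ replaced by $p'$ and the exponent $\sigma(\beta-\alpha)-1$ replaced by $\alpha-\sigma(\beta-\alpha)-1$.

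\textbf{Pointwise reduction.} Choose $s:=p$ in Lemma~\ref{lem:single-scale-gagliardo-kernels} (respectively $s:=p'$ for the source term). Integrating \eqref{eq:single-scale-v-kernel} in $r\in(0,\tau)$ and using that $Q_r\ge0$ gives
\[
    \abs{T^\eta_{\int_0^\tau\frG^v_{r,\sigma}\dd r}F}
    \lesssim T_{\mathcal Q}\bigl(\mathcal N_pF\bigr)+T_{\mathcal Q}\bigl(\mathcal N_p^\#F\bigr),
    \qquad
    \mathcal Q:=\int_0^\infty r^{\sigma(\beta-\alpha)-1}Q_r\dd r .
\]
Extending the integral to $(0,\infty)$ is allowed by positivity, and it makes the bound uniform in $\tau$. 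By \eqref{eq:Ns-sharp-norm}, both $\mathcal N_pF$ and $\mathcal N_p^\#F$ have $\L^p(\R^{1+2d})$ norm equal to $\|F\|_{\L^p(\dd\eta\otimes\dd(t,x,v))}$. By the weak Young inequality of Lemma~\ref{lem:young-nl}, it therefore suffices to show $\mathcal Q\in\L^{\theta,\infty}(\R^{1+2d})$ with $1-\frac1\theta=\frac{\sigma(\beta-\alpha)}{\homd}$. One also needs $1<\theta<\infty$ and $1<p<q<\infty$; this holds since $0<\sigma(\beta-\alpha)<\homd$.

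\textbf{Weak-$\L^\theta$ bound for $\mathcal Q$.} This is the main step, and I would prove it by a layer-cake argument. Set $\kappa:=\sigma(\beta-\alpha)$. For $\lambda>0$ we have $\{\mathcal Q>\lambda\}\subset\{\mathcal Q_{<\rho}>\lambda/2\}\cup\{\mathcal Q_{\ge\rho}>\lambda/2\}$, where $\mathcal Q_{<\rho}$ and $\mathcal Q_{\ge\rho}$ denote the integrals over $r<\rho$ and $r\ge\rho$.
\begin{itemize}
    \item \emph{Large scales.} By \eqref{eq:PQ-L1-Linfty}, $\|\mathcal Q_{\ge\rho}\|_{\L^\infty}\lesssim\int_\rho^\infty r^{\kappa-1-\homd}\dd r\sim\rho^{\kappa-\homd}$, which converges since $\kappa<\homd$.
    \item \emph{Small scales.} Again by \eqref{eq:PQ-L1-Linfty}, $\|\mathcal Q_{<\rho}\|_{\L^1}\lesssim\int_0^\rho r^{\kappa-1}\dd r\sim\rho^\kappa$, which converges since $\kappa>0$.
\end{itemize}
Choose $\rho$ so that $C\rho^{\kappa-\homd}=\lambda/2$. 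Then the large-scale set is empty, and Chebyshev's inequality applied to the small scales gives
\[
    \abs{\{\mathcal Q>\lambda\}}\lesssim\lambda^{-1}\rho^\kappa\sim\lambda^{-1-\kappa/(\homd-\kappa)}=\lambda^{-\theta}.
\]
This holds exactly when $\theta=\homd/(\homd-\kappa)$, which is the required exponent.

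\textbf{Source term.} For \eqref{eq:integrated-source-estimate} we use $\kappa':=\alpha-\sigma(\beta-\alpha)$. The hypothesis $\beta<\alpha(1+1/\sigma)$ is exactly $\kappa'>0$, and $\kappa'<\alpha<\homd$. The same layer-cake argument applied to $\int_0^\infty r^{\kappa'-1}P_r\dd r$ then gives weak $\L^{\theta'}$ with $1-\frac1{\theta'}=\kappa'/\homd$, and Young yields the claimed $q$.

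The only delicate points are keeping the $r$-integral inside the positive kernel before applying Young, and checking $\kappa,\kappa'\in(0,\homd)$. The hypothesis $1<q<\infty$ guarantees that the weak Young inequality applies.
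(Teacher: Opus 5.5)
Your proposal is correct and follows the paper's proof. It uses the same pointwise domination from Lemma~\ref{lem:single-scale-gagliardo-kernels} with $s=p$ (resp.\ $s=p'$), the weak Young inequality, and \eqref{eq:Ns-sharp-norm}. The only cosmetic difference is how you get the weak-$\L^\theta$ bound: you use a direct Chebyshev splitting at scale $\rho\sim\lambda^{-1/(\homd-\kappa)}$ and extend the $r$-integral to $(0,\infty)$ by positivity, whereas the paper packages the identical $\L^1$/$\L^\infty$ splitting in the $K$-functional argument of Lemma~\ref{lem:critical-integration-interpolation}.
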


\begin{proof}
    We first prove the velocity estimate.  Since $\beta>\alpha$, we have
    $\sigma(\beta-\alpha)>0$. Moreover, for every admissible $q$ in the
    statement,
    \[
        \frac1q=\frac1p-\frac{\sigma(\beta-\alpha)}{\homd}>0,
    \]
    and hence $\sigma(\beta-\alpha)<\homd$.

    By Lemma~\ref{lem:single-scale-gagliardo-kernels},
    \[
        \abs{T_{\int_0^\tau \frG^v_{r,\sigma}\dd r}^\eta F}
        \le
        C\Bigl(
        T_{\int_0^\tau r^{\sigma(\beta-\alpha)-1}Q_r\dd r}(\mathcal N_pF)
        +
        T_{\int_0^\tau r^{\sigma(\beta-\alpha)-1}Q_r\dd r}(\mathcal N_p^\#F)
        \Bigr).
    \]
    We now estimate the integrated kernel using
    Lemma~\ref{lem:critical-integration-interpolation}.  Put $J_r:=r^{\sigma(\beta-\alpha)-1}Q_r$.
    Since the kernels $Q_r$ satisfy
    \[
        \|Q_r\|_{\L^1}\lesssim 1,
        \qquad
        \|Q_r\|_{\L^\infty}\lesssim r^{-\homd},
    \]
    we obtain
    \[
        \|J_r\|_{\L^1}
        \lesssim r^{\sigma(\beta-\alpha)-1},
    \]
    and
    \[
        \|J_r\|_{\L^\infty}
        \lesssim r^{\sigma(\beta-\alpha)-1-\homd}
        =
        r^{-1-(\homd-\sigma(\beta-\alpha))}.
    \]
    Hence Lemma~\ref{lem:critical-integration-interpolation}, applied with
    \[
        N=1+2d,
        \qquad
        \mu=\sigma(\beta-\alpha),
        \qquad
        \nu=\homd-\sigma(\beta-\alpha),
    \]
    gives, uniformly in $\tau>0$,
    \[
        \left\|
        \int_0^\tau r^{\sigma(\beta-\alpha)-1}Q_r\dd r
        \right\|_{\L^{\theta_v,\infty}}
        \lesssim 1,
        \qquad
        \theta_v
        =
        \frac{\sigma(\beta-\alpha)+\homd-\sigma(\beta-\alpha)}
        {\homd-\sigma(\beta-\alpha)}
        =
        \frac{\homd}{\homd-\sigma(\beta-\alpha)}.
    \]
    Thus
    \[
        \frac1{\theta_v}
        =
        1-\frac{\sigma(\beta-\alpha)}{\homd}.
    \]
    Consequently,
    \[
        \frac1q+1
        =
        \frac1p+1-\frac{\sigma(\beta-\alpha)}{\homd}
        =
        \frac1p+\frac1{\theta_v},
    \]
    which is the exponent relation needed for the weak Young
    inequality in Lemma~\ref{lem:young-nl}. Therefore,
    \[
        \left\|T_{\int_0^\tau \frG^v_{r,\sigma}\dd r}^\eta F\right\|_{\L^q}
        \lesssim
        \|\mathcal N_pF\|_{\L^p}
        +
        \|\mathcal N_p^\#F\|_{\L^p}.
    \]
    Using \eqref{eq:Ns-sharp-norm}, we obtain
    \[
        \left\|T_{\int_0^\tau \frG^v_{r,\sigma}\dd r}^\eta F\right\|_{\L^q}
        \lesssim
        \|F\|_{\L^p(\dd\eta\otimes\dd(t,x,v))},
    \]
    which proves \eqref{eq:integrated-v-estimate}.

    The source estimate is analogous.  The assumption
    \[
        \beta<\alpha\left(1+\frac1\sigma\right)
    \]
    is equivalent to $\sigma(\beta-\alpha)<\alpha$,
    and therefore $\alpha-\sigma(\beta-\alpha)>0$. Moreover, for every
    admissible $q$,
    \[
        \frac1q=\frac1{p'}-\frac{\alpha-\sigma(\beta-\alpha)}{\homd}>0,
    \]
    hence $\alpha-\sigma(\beta-\alpha)<\homd$.

    By Lemma~\ref{lem:single-scale-gagliardo-kernels}, now with
    $s=p'$,
    \[
        \abs{T_{\int_0^\tau \frG_{r,\sigma}\dd r}^\eta F}
        \le
        C\Bigl(
        T_{\int_0^\tau r^{\alpha-\sigma(\beta-\alpha)-1}P_r\dd r}(\mathcal N_{p'}F)
        +
        T_{\int_0^\tau r^{\alpha-\sigma(\beta-\alpha)-1}P_r\dd r}(\mathcal N_{p'}^\#F)
        \Bigr).
    \]
    Put $J_r:=r^{\alpha-\sigma(\beta-\alpha)-1}P_r$.
    As before,
    \[
        \|J_r\|_{\L^1}
        \lesssim r^{\alpha-\sigma(\beta-\alpha)-1},
    \]
    and
    \[
        \|J_r\|_{\L^\infty}
        \lesssim r^{\alpha-\sigma(\beta-\alpha)-1-\homd}
        =
        r^{-1-(\homd-\alpha+\sigma(\beta-\alpha))}.
    \]
    Applying Lemma~\ref{lem:critical-integration-interpolation} with
    \[
        N=1+2d,
        \qquad
        \mu=\alpha-\sigma(\beta-\alpha),
        \qquad
        \nu=\homd-\alpha+\sigma(\beta-\alpha),
    \]
    yields
    \[
        \left\|
        \int_0^\tau r^{\alpha-\sigma(\beta-\alpha)-1}P_r\dd r
        \right\|_{\L^{\theta_S,\infty}}
        \lesssim 1,
        \qquad
        \theta_S
        =
        \frac{\homd}{\homd-\alpha+\sigma(\beta-\alpha)}.
    \]
    Hence
    \[
        \frac1{\theta_S}
        =
        1-\frac{\alpha-\sigma(\beta-\alpha)}{\homd},
    \]
    and therefore
    \[
        \frac1q+1
        =
        \frac1{p'}+1-\frac{\alpha-\sigma(\beta-\alpha)}{\homd}
        =
        \frac1{p'}+\frac1{\theta_S}.
    \]
    Lemma~\ref{lem:young-nl} gives
    \[
        \left\|T_{\int_0^\tau \frG_{r,\sigma}\dd r}^\eta F\right\|_{\L^q}
        \lesssim
        \|\mathcal N_{p'}F\|_{\L^{p'}}
        +
        \|\mathcal N_{p'}^\#F\|_{\L^{p'}}.
    \]
    Finally, by \eqref{eq:Ns-sharp-norm},
    \[
        \left\|T_{\int_0^\tau \frG_{r,\sigma}\dd r}^\eta F\right\|_{\L^q}
        \lesssim
        \|F\|_{\L^{p'}(\dd\eta\otimes\dd(t,x,v))},
    \]
    which proves \eqref{eq:integrated-source-estimate}.
\end{proof}

\subsubsection{Kernel bounds II}

Let us now derive the kernel bounds in the Bessel case. The next proposition is the key place where the
nonlocality plays a role.

\begin{prop}
    \label{prop:kernel-pointwise}
    There exists $C>0$ such that for every $r>0$,
    \begin{equation}
        \label{eq:pointwise-bessel-G}
        \abs{G_{r,\sigma}(s,y,w)}
        \le C r^{\alpha-1-\homd-\sigma(\beta-\alpha)}
        \one_{\{\abs{s}\sim r^\alpha,\ \abs{y}\lesssim r^\beta\}}
        \Bigl(1+\frac{\abs{w}}{r^{\beta-\alpha}}\Bigr)^{-d-\sigma},
    \end{equation}
    and
    \begin{equation}
        \label{eq:pointwise-bessel-Gv}
        \abs{G^v_{r,\sigma}(s,y,w)}
        \le C r^{-1-\homd+\sigma(\beta-\alpha)}
        \one_{\{\abs{s}\sim r^\alpha,\ \abs{y}\lesssim r^\beta\}}
        \Bigl(1+\frac{\abs{w}}{r^{\beta-\alpha}}\Bigr)^{-d-1+\sigma}.
    \end{equation}
\end{prop}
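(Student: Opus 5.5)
The plan is to reduce both bounds, by parabolic-type rescaling, to uniform estimates for fixed profiles in the velocity variable alone, using Lemma~\ref{lem:rescaled-profile-bounds}. Write $R:=r^{\beta-\alpha}$, $\lambda:=s/r^\alpha$, $\bar y:=y/r^\beta$. The operators $D_w^\sigma$ and $I_w^\sigma$ act only in $w$ and commute with dilations up to powers. If $g(w)=\phi(w/R)$, then $D_w^\sigma g(w)=R^{-\sigma}(D^\sigma\phi)(w/R)$ and $I_w^\sigma g(w)=R^{\sigma}(I^\sigma\phi)(w/R)$. By definition \eqref{eq:def-bessel-G-kernel},
\[
G_{r,\sigma}(s,y,w)=-\alpha\abs{\cc}^{-1}r^{\alpha-1-\homd}\lambda\,R^{-\sigma}\,(D^\sigma\Phi_{r,\lambda,\bar y})(w/R).
\]
Similarly, $L_r(s,y,w)=r^{-1-\homd}\Psi_{r,\lambda,\bar y}(w/R)$, so that
\[
G^v_{r,\sigma}(s,y,w)=r^{-1-\homd}R^{\sigma}(I^\sigma\Psi_{r,\lambda,\bar y})(w/R).
\]
These produce exactly the prefactors $r^{\alpha-1-\homd-\sigma(\beta-\alpha)}$ and $r^{-1-\homd+\sigma(\beta-\alpha)}$. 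The indicator $\one_{\{\abs{s}\sim r^\alpha,\ \abs{y}\lesssim r^\beta\}}$ comes from Lemma~\ref{lem:compact-kernels}: for $\lambda\notin\mathcal I$, or $\bar y$ outside a fixed compact set, the profiles $\Phi$ and $\Psi$ vanish identically in $\eta$, hence so do their images under $D^\sigma$ and $I^\sigma$.

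It therefore remains to prove two uniform decay estimates, stated as a lemma, over families bounded in $\C^N_c(\R^d)$ with supports in a fixed ball $B_{\rho}$:
\[
\abs{D^\sigma\phi(\eta)}\lesssim(1+\abs\eta)^{-d-\sigma},\qquad \abs{I^\sigma\Psi(\eta)}\lesssim(1+\abs\eta)^{-d-1+\sigma}\ \text{ when }\textstyle\int\Psi=0.
\]
For the first estimate, I would use the singular-integral representation $D^\sigma\phi(\eta)=c\,\PV\!\int(\phi(\eta)-\phi(\zeta))\abs{\eta-\zeta}^{-d-\sigma}\dd\zeta$, valid since $0<\sigma<1$ (or with a second-order symmetric difference). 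For $\abs\eta\le2\rho$, I would split at $\abs{\eta-\zeta}=1$ and use the $\C^2$ bound near the diagonal and boundedness away from it, which gives an $O(1)$ bound. For $\abs\eta>2\rho$ we have $\phi(\eta)=0$, so $\abs{D^\sigma\phi(\eta)}\le c\int_{B_\rho}\abs{\phi}\abs{\eta-\zeta}^{-d-\sigma}\dd\zeta\lesssim\abs\eta^{-d-\sigma}$.

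For the second estimate, the case $2\sigma<d$, i.e.\ $\sigma<d$ here, is automatic for $d\ge1$ since $\sigma<1$. In that case $I^\sigma\Psi=c\int\Psi(\zeta)\abs{\eta-\zeta}^{\sigma-d}\dd\zeta$. For $\abs\eta\le2\rho$, local integrability of $\abs{\cdot}^{\sigma-d}$ gives a bound by $\|\Psi\|_\infty$. For $\abs\eta>2\rho$, I would use the mean-zero property to subtract $\abs\eta^{\sigma-d}$ and apply the mean value theorem. This gives $\abs{\abs{\eta-\zeta}^{\sigma-d}-\abs\eta^{\sigma-d}}\lesssim\abs\zeta\abs\eta^{\sigma-d-1}$, hence decay $\abs\eta^{-d-1+\sigma}$.

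The main obstacle I expect is the delicate point of uniformity together with the rigorous identification of the Fourier-multiplier definitions with these integral representations for each profile. Since the profiles lie in $\C_c^\infty$, this identification is standard. Uniformity then follows because all constants depend only on $\rho$, $\|\Phi\|_{\C^2}$ and $\|\Psi\|_{\L^\infty}$, which Lemma~\ref{lem:rescaled-profile-bounds} controls uniformly in $r,\lambda,\bar y$. Inserting these bounds into the rescaling identities yields \eqref{eq:pointwise-bessel-G} and \eqref{eq:pointwise-bessel-Gv}.
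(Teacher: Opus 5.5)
Your proposal is correct and follows the paper's proof essentially verbatim. You rescale in $w$ via the profiles $\Phi_{r,\lambda,\bar y}$ and $\Psi_{r,\lambda,\bar y}$ of Lemma~\ref{lem:rescaled-profile-bounds} and then invoke uniform decay bounds for $D^\sigma$ and $I^\sigma$, which you prove exactly as Lemma~\ref{lem:family-fractional} does: the singular-integral formula for $D^\sigma$, and the Riesz kernel together with the zero average and the mean value theorem for $I^\sigma$. Two harmless slips: the kernel representation of $I^\sigma$ needs $\sigma<d$, not $2\sigma<d$; and since $\sigma<1$, a $\C^1$ bound (as in the paper) already suffices near the diagonal, so $\C^2$ is not needed.
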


\begin{proof}
    We first treat $G_{r,\sigma}$.
    Fix $r>0$ and $(s,y)$ with $\abs{s}\le 2r^\alpha$ and $\abs{y}\le Cr^\beta$.
    Define
    \[
        \Phi_{r,s,y}(\eta)
        :=\psi\Bigl(\frac{s}{r^\alpha},\A_{s/r^\alpha}(r)^{-1}\binom{y}{r^{\beta-\alpha}\eta}\Bigr),
        \qquad \eta\in\R^d.
    \]
    Put
    \[
        \lambda:=\frac{s}{r^\alpha},
        \qquad
        \bar y:=r^{-\beta}y.
    \]

    If $\lambda$ does not belong to the time projection of $\supp\psi$, or
    if $\bar y$ is outside the corresponding fixed support set, then the
    kernel vanishes and there is nothing to prove. Otherwise
    \[
        \Phi_{r,s,y}(\eta)
        =
        \Phi_{r,\lambda,\bar y}(\eta)
    \]
    in the notation of Lemma~\ref{lem:rescaled-profile-bounds}. Hence the
    family $\{\Phi_{r,s,y}\}$ is bounded in $\C_c^1(\R^d)$, uniformly in
    $r,s,y$, and all supports are contained in one fixed ball.
    By the scaling of the fractional derivative,
    \[
        D_w^\sigma\Bigl[\psi\Bigl(\frac{s}{r^\alpha},\A_{s/r^\alpha}(r)^{-1}\binom{y}{w}\Bigr)\Bigr]
        =
        r^{-\sigma(\beta-\alpha)} (D^\sigma \Phi_{r,s,y})(r^{\alpha-\beta}w).
    \]
    Lemma~\ref{lem:family-fractional}(i) therefore yields
    \[
        \abs{D_w^\sigma\Bigl[\psi\Bigl(\frac{s}{r^\alpha},\A_{s/r^\alpha}(r)^{-1}\binom{y}{w}\Bigr)\Bigr]}
        \lesssim r^{-\sigma(\beta-\alpha)}\Bigl(1+\frac{\abs{w}}{r^{\beta-\alpha}}\Bigr)^{-d-\sigma}.
    \]
    Multiplying by the prefactor in \eqref{eq:def-bessel-G-kernel} and using $\abs{s}\sim r^\alpha$ proves
    \eqref{eq:pointwise-bessel-G}.

    We now turn to $G^v_{r,\sigma}$.
    Put again
    \[
        \lambda:=\frac{s}{r^\alpha},
        \qquad
        \bar y:=r^{-\beta}y.
    \]
    If the kernel does not vanish, then $\lambda$ and $\bar y$ lie in the
    fixed support set described in Lemma~\ref{lem:rescaled-profile-bounds}.
    Moreover,
    \[
        \Psi_{r,s,y}(\eta)
        :=
        r^{1+\homd}L_r(s,y,r^{\beta-\alpha}\eta)
        =
        \Psi_{r,\lambda,\bar y}(\eta).
    \]
    Therefore Lemma~\ref{lem:rescaled-profile-bounds} implies that
    $\Psi_{r,s,y}$ is uniformly bounded in $\C_c^1(\R^d)$, supported in a
    fixed ball, and has zero average.
    By the scaling of the Riesz potential,
    \[
        G^v_{r,\sigma}(s,y,w)
        =I_w^\sigma L_r(s,y,\cdot)(w)
        =r^{-1-\homd+\sigma(\beta-\alpha)}
        \bigl(I^\sigma\Psi_{r,s,y}\bigr)(r^{\alpha-\beta}w).
    \]
    Lemma~\ref{lem:family-fractional}~(ii) gives
    \[
        \abs{G^v_{r,\sigma}(s,y,w)}
        \lesssim r^{-1-\homd+\sigma(\beta-\alpha)}
        \Bigl(1+\frac{\abs{w}}{r^{\beta-\alpha}}\Bigr)^{-d-1+\sigma}.
    \]
    Since $L_r$ vanishes unless $\abs{s}\sim r^\alpha$ and $\abs{y}\lesssim r^\beta$, this proves
    \eqref{eq:pointwise-bessel-Gv}.
\end{proof}

The pointwise bounds immediately imply the corresponding $\L^\theta$ estimates.

\begin{lemma}
    \label{lem:kernel-Ltheta}
    Let $\theta\in[1,\infty]$.
    Then, uniformly in $r>0$,
    \[
        \|G^v_{r,\sigma}\|_{\L^\theta}
        \lesssim r^{\sigma(\beta-\alpha)-1+\homd(\frac1\theta-1)},
        \qquad
        \|G_{r,\sigma}\|_{\L^\theta}
        \lesssim r^{\alpha-\sigma(\beta-\alpha)-1+\homd(\frac1\theta-1)}.
    \]
\end{lemma}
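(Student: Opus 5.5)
The plan is to integrate the pointwise bounds of Proposition~\ref{prop:kernel-pointwise} directly, using the anisotropic change of variables $(s,y,w)=\delta_r(\bar s,\bar y,\bar w)=(r^\alpha\bar s,r^\beta\bar y,r^{\beta-\alpha}\bar w)$. Its Jacobian is
\[
    r^{\alpha}\,r^{\beta d}\,r^{(\beta-\alpha)d}=r^{(2\beta-\alpha)d+\alpha}=r^{\homd}.
\]
Both kernels have the form $r^{a}\,\Xi(\delta_r^{-1}(s,y,w))$, with a profile
\[
    \Xi(\bar s,\bar y,\bar w)=\one_E(\bar s,\bar y)\,(1+\abs{\bar w})^{-\kappa}.
\]
Here $E\subset\R^{1+d}$ is a fixed compact set, coming from the support conditions $\abs{s}\sim r^\alpha$ and $\abs{y}\lesssim r^\beta$. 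For $G_{r,\sigma}$ we have $a=\alpha-1-\homd-\sigma(\beta-\alpha)$ and $\kappa=d+\sigma$. For $G^v_{r,\sigma}$ we have $a=-1-\homd+\sigma(\beta-\alpha)$ and $\kappa=d+1-\sigma$.

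The key step is to check that $\Xi\in\L^\theta(\R^{1+2d})$ for every $\theta\in[1,\infty]$, with a bound independent of $r$. For $\theta=\infty$ this is trivial. For $\theta<\infty$ the $(\bar s,\bar y)$-integral is over the bounded set $E$. The $\bar w$-integral $\int_{\R^d}(1+\abs{\bar w})^{-\kappa\theta}\dd\bar w$ is finite because $\kappa\theta\ge\kappa>d$. This uses $\sigma>0$ in the first case and $\sigma<1$ in the second. Since $\theta\ge1$, the case $\theta=1$ is the worst and is still integrable.

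It remains to scale back:
\[
    \|r^a\,\Xi\circ\delta_r^{-1}\|_{\L^\theta}=r^{a}\,r^{\homd/\theta}\,\|\Xi\|_{\L^\theta}.
\]
Writing $a+\homd/\theta=(a+\homd)+\homd(\tfrac1\theta-1)$ produces exactly the claimed exponents. Indeed, $a+\homd$ equals $\alpha-\sigma(\beta-\alpha)-1$ for $G_{r,\sigma}$ and $\sigma(\beta-\alpha)-1$ for $G^v_{r,\sigma}$.

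There is no real obstacle here. The only point needing care is the integrability of the tail at $\theta=1$, which the decay exponents $d+\sigma$ and $d+1-\sigma$ from Proposition~\ref{prop:kernel-pointwise} are designed to ensure.
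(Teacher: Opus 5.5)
Your proposal is correct and follows the paper's proof: both integrate the pointwise bounds of Proposition~\ref{prop:kernel-pointwise}, using that the $(s,y)$-support has measure $\sim r^{\alpha+\beta d}$ and that the $w$-tails $(1+\abs{w}/r^{\beta-\alpha})^{-\theta\kappa}$ with $\kappa\in\{d+\sigma,\,d+1-\sigma\}$ are integrable and contribute $r^{(\beta-\alpha)d}$. Your change of variables via $\delta_r$, with Jacobian $r^{\homd}$, is just a more compact way of writing the same computation.
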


\begin{proof}
    For $G^v_{r,\sigma}$ and $G_{r,\sigma}$ we integrate the bounds from
    Proposition~\ref{prop:kernel-pointwise}.
    Indeed, the polynomial tails in the $w$-variable are integrable in
    $\L^\theta_w$ for every $1\le \theta<\infty$, since
    \[
        \theta(d+\sigma)>d
        \qquad\text{and}\qquad
        \theta(d+1-\sigma)>d
    \]
    whenever $0<\sigma<1$.
    For instance,
    \begin{align*}
        \|G^v_{r,\sigma}\|_{\L^\theta}^\theta
         & \lesssim r^{\theta(-1-\homd+\sigma(\beta-\alpha))}
        \cdot r^{\alpha+\beta d}
        \cdot \int_{\R^d}\Bigl(1+\frac{\abs{w}}{r^{\beta-\alpha}}\Bigr)^{-\theta(d+1-\sigma)}\dd w \\
         & \lesssim r^{\theta(-1-\homd+\sigma(\beta-\alpha))}
        \cdot r^{\alpha+\beta d}
        \cdot r^{(\beta-\alpha)d}                                                                  \\
         & = r^{\theta(-1-\homd+\sigma(\beta-\alpha))+\homd},
    \end{align*}
    which yields the desired exponent after taking the $\theta$-th root. The case $\theta=\infty$ follows
    directly from Proposition~\ref{prop:kernel-pointwise}.
    The estimate for $G_{r,\sigma}$ is identical.
\end{proof}

\begin{lemma}
    \label{lem:int-kernels}
    Assume $0<\sigma<1$ and $0<\alpha<\beta<\alpha(1+1/\sigma)$.
    Define
    \[
        \theta_1:=\frac{\homd}{\homd-\sigma(\beta-\alpha)},
        \qquad
        \theta_2:=\frac{\homd}{\homd-\alpha+\sigma(\beta-\alpha)}.
    \]
    Then, uniformly in $\tau>0$,
    \[
        \norm{\int_0^\tau G^v_{r,\sigma}\dd r}_{\L^{\theta_1,\infty}(\R^{1+2d})}\lesssim1,
        \qquad
        \norm{\int_0^\tau G_{r,\sigma}\dd r}_{\L^{\theta_2,\infty}(\R^{1+2d})}\lesssim1.
    \]
\end{lemma}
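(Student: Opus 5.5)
The plan is to apply Lemma~\ref{lem:critical-integration-interpolation}, which was already used in the proof of Lemma~\ref{lem:integrated-gagliardo-kernels}. That lemma converts a family of kernels $J_r$ on $\R^{N}$ with
\[
    \|J_r\|_{\L^1}\lesssim r^{\mu-1},
    \qquad
    \|J_r\|_{\L^\infty}\lesssim r^{-1-\nu},
    \qquad \mu,\nu>0,
\]
into the uniform bound
\[
    \Bigl\|\int_0^\tau J_r\dd r\Bigr\|_{\L^{\theta,\infty}}\lesssim 1,
    \qquad
    \theta=\frac{\mu+\nu}{\nu}.
\]
The two $\L^1$/$\L^\infty$ inputs come directly from Lemma~\ref{lem:kernel-Ltheta}, taken with $\theta=1$ and $\theta=\infty$.

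For $J_r=G^v_{r,\sigma}$, Lemma~\ref{lem:kernel-Ltheta} gives
\[
    \|G^v_{r,\sigma}\|_{\L^1}\lesssim r^{\sigma(\beta-\alpha)-1},
    \qquad
    \|G^v_{r,\sigma}\|_{\L^\infty}\lesssim r^{\sigma(\beta-\alpha)-1-\homd}.
\]
This matches the hypotheses with $N=1+2d$, $\mu=\sigma(\beta-\alpha)$ and $\nu=\homd-\sigma(\beta-\alpha)$.
\begin{itemize}
    \item $\mu>0$ holds because $\beta>\alpha$.
    \item $\nu>0$ holds because $\homd=(2\beta-\alpha)d+\alpha>\beta-\alpha>\sigma(\beta-\alpha)$ when $d\ge1$, $0<\sigma<1$ and $\alpha>0$. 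This is unconditional, unlike in the Gagliardo lemma, where it was deduced from the exponent relation.
\end{itemize}
The resulting exponent is $(\mu+\nu)/\nu=\homd/(\homd-\sigma(\beta-\alpha))=\theta_1$.

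For $J_r=G_{r,\sigma}$, Lemma~\ref{lem:kernel-Ltheta} gives $\mu=\alpha-\sigma(\beta-\alpha)$ and $\nu=\homd-\alpha+\sigma(\beta-\alpha)$.
\begin{itemize}
    \item $\mu>0$ is exactly the hypothesis $\beta<\alpha(1+1/\sigma)$.
    \item $\nu>0$ follows because $\homd-\alpha=2(\beta-\alpha)d+\alpha d>0$ and $\sigma(\beta-\alpha)>0$.
\end{itemize}
The exponent is $(\mu+\nu)/\nu=\homd/(\homd-\alpha+\sigma(\beta-\alpha))=\theta_2$. In both cases the bound is uniform in $\tau$, because the interpolation lemma integrates over all of $(0,\infty)$ after dominating.

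The step I expect to need care is whether Lemma~\ref{lem:critical-integration-interpolation} really applies to signed kernels. If it requires nonnegativity, I would replace $G_{r,\sigma}$ and $G^v_{r,\sigma}$ by their absolute values. This is harmless, since the weak-$\L^\theta$ quasi-norm is monotone under pointwise domination and the $\L^1$/$\L^\infty$ bounds hold for $\abs{J_r}$ as well.

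Should the lemma be unavailable, I would prove it directly by a level-set split. Fix $\lambda>0$ and choose $\rho$ with $\rho^{-\nu}\sim\lambda$. The large-$r$ part $\int_\rho^\infty$ has $\L^\infty$ norm at most $C\rho^{-\nu}$, so it is below $\lambda/2$ everywhere after fixing the constant. The small-$r$ part $\int_0^\rho$ has $\L^1$ norm at most $C\rho^{\mu}$, and Chebyshev's inequality bounds its level set by $C\rho^\mu/\lambda\sim\lambda^{-1-\mu/\nu}=\lambda^{-\theta}$, which is the weak-$\L^\theta$ bound.
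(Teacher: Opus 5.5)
Your proposal is correct and follows the paper's proof exactly. The paper also feeds the $\L^1$ and $\L^\infty$ bounds from Lemma~\ref{lem:kernel-Ltheta} into Lemma~\ref{lem:critical-integration-interpolation} with the same $\mu,\nu$ to obtain $\theta_1$ and $\theta_2$. Your explicit check that $\nu>0$ is a small addition the paper leaves implicit, and the detour through absolute values is unnecessary because Lemma~\ref{lem:critical-integration-interpolation} is stated for signed kernels.
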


\begin{proof}
    By Lemma~\ref{lem:kernel-Ltheta}, evaluated at $\theta=1$ and
    $\theta=\infty$,
    \[
        \|G^v_{r,\sigma}\|_{\L^1}
        \lesssim
        r^{-1+\sigma(\beta-\alpha)},
        \qquad
        \|G^v_{r,\sigma}\|_{\L^\infty}
        \lesssim
        r^{-1-\bigl(\homd-\sigma(\beta-\alpha)\bigr)}.
    \]
    Apply Lemma~\ref{lem:critical-integration-interpolation} with $\mu=\sigma(\beta-\alpha)$ and $\nu=\homd-\sigma(\beta-\alpha)$.
    Since
    \[
        \frac{\mu+\nu}{\nu}
        =
        \frac{\homd}{\homd-\sigma(\beta-\alpha)}
        =
        \theta_1,
    \]
    we obtain
    \[
        \left\|\int_0^\tau G^v_{r,\sigma}\dd r\right\|_{\L^{\theta_1,\infty}}
        \lesssim 1
        \qquad\text{uniformly in }\tau>0.
    \]

    Likewise, Lemma~\ref{lem:kernel-Ltheta} gives
    \[
        \|G_{r,\sigma}\|_{\L^1}
        \lesssim
        r^{-1+\alpha-\sigma(\beta-\alpha)},
        \qquad
        \|G_{r,\sigma}\|_{\L^\infty}
        \lesssim
        r^{-1-\bigl(\homd-\alpha+\sigma(\beta-\alpha)\bigr)}.
    \]
    Here $\alpha-\sigma(\beta-\alpha)>0$ by the assumption $\beta<\alpha(1+1/\sigma)$.
    Applying Lemma~\ref{lem:critical-integration-interpolation} with
    \[
        \mu=\alpha-\sigma(\beta-\alpha),
        \qquad
        \nu=\homd-\alpha+\sigma(\beta-\alpha),
    \]
    yields
    \[
        \left\|\int_0^\tau G_{r,\sigma}\dd r\right\|_{\L^{\theta_2,\infty}}
        \lesssim 1,
        \qquad
        \theta_2
        =
        \frac{\homd}{\homd-\alpha+\sigma(\beta-\alpha)}.
    \]
    This proves the lemma.
\end{proof}

\section{Proofs of the kinetic nonlocal Gagliardo--Nirenberg inequalities}
\label{sec:gain}

\subsection{Nonlocal diffusion of Gagliardo type}

\begin{proof}[Proof of Theorem~\ref{thm:int:mainGN-differencemp}]
    Choose $\alpha>0$ and put $\beta=\rho\alpha$, where
    \begin{equation}
        \label{eq:rho-gagliardo-proof}
        \rho:=
        \frac{d(p-2)+2p\sigma+2}{2(d(p-2)+p\sigma)}.
    \end{equation}
    Since
    \[
        d(p-2)+p\sigma=p(d+\sigma)-2d>0
    \]
    in the range of the theorem, this is well defined.  Moreover,
    \[
        \rho-1
        =
        \frac{2d+2-dp}{2(d(p-2)+p\sigma)},
    \]
    so $\rho>1$ is equivalent to $p<2+2/d$.  Also
    \[
        1-\sigma(\rho-1)
        =
        \frac{p(2d+d\sigma+2\sigma)-(4d+2d\sigma+2\sigma)}
        {2(d(p-2)+p\sigma)},
    \]
    and therefore $1-\sigma(\rho-1)>0$ is equivalent to
    \[
        p>2-\frac{2\sigma}{2d+d\sigma+2\sigma}.
    \]
    Thus $0<\alpha<\beta<\alpha(1+1/\sigma)$.

    For this choice, we have
    \[
        \homd=\alpha\bigl((2\rho-1)d+1\bigr),
    \]
    and Proposition~\ref{prop:representation-formula-nonlocal} yields
    \[
        f-T_{K_\tau}f
        =
        T_{\int_0^\tau \frG_{r,\sigma}\dd r}^\eta S
        +
        T_{\int_0^\tau \frG^v_{r,\sigma}\dd r}^\eta(\Dv f).
    \]
    Lemma~\ref{lem:integrated-gagliardo-kernels} gives the two estimates
    \[
        \left\|T_{\int_0^\tau \frG^v_{r,\sigma}\dd r}^\eta(\Dv f)\right\|_{\L^q}
        \le C\|\Dv f\|_{\L^p(\dd\eta\otimes\dd(t,x,v))}
    \]
    provided
    \begin{equation}
        \label{eq:q-v-condition-gagliardo}
        \frac1q+\frac{\sigma(\rho-1)}{(2\rho-1)d+1}=\frac1p,
    \end{equation}
    and
    \[
        \left\|T_{\int_0^\tau \frG_{r,\sigma}\dd r}^\eta S\right\|_{\L^q}
        \le C\|S\|_{\L^{p'}(\dd\eta\otimes\dd(t,x,v))}
    \]
    provided
    \begin{equation}
        \label{eq:q-source-condition-gagliardo}
        \frac1q+
        \frac{1-\sigma(\rho-1)}{(2\rho-1)d+1}
        =\frac1{p'}.
    \end{equation}
    The value of $\rho$ in \eqref{eq:rho-gagliardo-proof} is exactly the
    solution for which \eqref{eq:q-v-condition-gagliardo} and
    \eqref{eq:q-source-condition-gagliardo} give the same $q$.  Substituting
    \eqref{eq:rho-gagliardo-proof} into \eqref{eq:q-v-condition-gagliardo}
    gives
    \begin{equation}
        \label{eq:q-gagliardo-proof}
        \frac1q=\frac{d(p\sigma+2)}{2p(d\sigma+d+\sigma)},
        \qquad\text{that is,}\qquad
        q=\frac{2p(d\sigma+d+\sigma)}{d(p\sigma+2)}.
    \end{equation}
    Furthermore,
    \[
        q>p
        \quad\Longleftrightarrow\quad
        p<2+\frac2d,
    \]
    and
    \[
        q>p'
        \quad\Longleftrightarrow\quad
        p>2-\frac{2\sigma}{2d+d\sigma+2\sigma}.
    \]
    Hence all weak Young exponents used above are admissible, and for every
    $\tau>0$,
    \begin{equation}
        \label{eq:additive-gagliardo-before-limit}
        \|f-T_{K_\tau}f\|_{\L^q}
        \le
        C\Bigl(
        \|\Dv f\|_{\L^p(\dd\eta\otimes\dd(t,x,v))}
        +
        \|S\|_{\L^{p'}(\dd\eta\otimes\dd(t,x,v))}
        \Bigr).
    \end{equation}

    Let $\theta$ be defined by
    \[
        \frac1q+1=\frac1\theta+\frac1p.
    \]
    By Lemmas~\ref{lem:young-nl} and~\ref{lem:kernel-Kr},
    \[
        \|T_{K_\tau}f\|_{\L^q}
        \le
        \|K_\tau\|_{\L^\theta}\|f\|_{\L^p}
        \le
        C\tau^{\homd(1/q-1/p)}\|f\|_{\L^p}.
    \]
    Since $q>p$, the exponent $\homd(1/q-1/p)$ is negative.  Therefore
    \[
        \|T_{K_\tau}f\|_{\L^q}\to0
        \qquad\text{as }\tau\to\infty.
    \]
    Passing to the limit in \eqref{eq:additive-gagliardo-before-limit} gives
    the additive estimate
    \begin{equation}
        \label{eq:additive-gagliardo-estimate}
        \|f\|_{\L^q}
        \le
        C\Bigl(
        \|\Dv f\|_{\L^p(\dd\eta\otimes\dd(t,x,v))}
        +
        \|S\|_{\L^{p'}(\dd\eta\otimes\dd(t,x,v))}
        \Bigr).
    \end{equation}

    It remains to balance the two terms in
    \eqref{eq:additive-gagliardo-estimate}.  If $S=0$, then
    $f=0$ and there is nothing to prove.  We may therefore assume that
    $S\neq 0$ and $f \neq 0$.

    For $\nu>0$, define
    \[
        f_\nu(t,x,v):=f(\nu t,\nu x,v),
        \qquad
        S_\nu(t,x,v,h):=\nu S(\nu t,\nu x,v,h).
    \]
    Then
    \[
        (\partial_t+v\cdot\nabla_x)f_\nu
        =
        \nu[(\partial_t+v\cdot\nabla_x)f](\nu t,\nu x,v)
        =
        \Dvs S_\nu .
    \]
    Moreover,
    \[
        \|f_\nu\|_{\L^q}
        =
        \nu^{-\frac{d+1}{q}}\|f\|_{\L^q},
    \]
    \[
        \|\Dv f_\nu\|_{\L^p(\dd\eta\otimes\dd(t,x,v))}
        =
        \nu^{-\frac{d+1}{p}}
        \|\Dv f\|_{\L^p(\dd\eta\otimes\dd(t,x,v))},
    \]
    and
    \[
        \|S_\nu\|_{\L^{p'}(\dd\eta\otimes\dd(t,x,v))}
        =
        \nu^{1-\frac{d+1}{p'}}
        \|S\|_{\L^{p'}(\dd\eta\otimes\dd(t,x,v))}.
    \]
    Applying \eqref{eq:additive-gagliardo-estimate} to
    $(f_\nu,S_\nu)$ and multiplying by $\nu^{(d+1)/q}$ gives
    \begin{equation}
        \label{eq:nu-balance-gagliardo}
        \|f\|_{\L^q}
        \le
        C\Bigl(
        \nu^A
        \|\Dv f\|_{\L^p(\dd\eta\otimes\dd(t,x,v))}
        +
        \nu^B
        \|S\|_{\L^{p'}(\dd\eta\otimes\dd(t,x,v))}
        \Bigr),
        \qquad \nu>0,
    \end{equation}
    where
    \[
        A:=(d+1)\Bigl(\frac1q-\frac1p\Bigr),
        \qquad
        B:=1+(d+1)\Bigl(\frac1q-\frac1{p'}\Bigr).
    \]
    Using \eqref{eq:q-gagliardo-proof}, we obtain
    \[
        A
        =
        -\frac{\sigma(d+1)(2d+2-dp)}
        {2p(d\sigma+d+\sigma)}
        <0,
    \]
    and
    \[
        B
        =
        \frac{(2d+2-dp)(d\sigma+2d+\sigma)}
        {2p(d\sigma+d+\sigma)}
        >0,
    \]
    since $p<2+2/d$.  Hence the two powers of $\nu$ in
    \eqref{eq:nu-balance-gagliardo} have opposite signs.

    Optimising \eqref{eq:nu-balance-gagliardo} in $\nu>0$ yields
    \[
        \|f\|_{\L^q}
        \le
        C
        \|\Dv f\|_{\L^p(\dd\eta\otimes\dd(t,x,v))}^{\frac{B}{B-A}}
        \|S\|_{\L^{p'}(\dd\eta\otimes\dd(t,x,v))}^{-\frac{A}{B-A}}.
    \]
    A direct computation gives
    \[
        \frac{B}{B-A}
        =
        \frac{d\sigma+2d+\sigma}{2(d\sigma+d+\sigma)},
        \qquad
        -\frac{A}{B-A}
        =
        \frac{\sigma(d+1)}{2(d\sigma+d+\sigma)}.
    \]
    This proves the desired multiplicative estimate.
\end{proof}

\subsection{Nonlocal diffusion of Bessel type}

\begin{proof}[Proof of Theorem~\ref{thm:int:mainGN}]
    Choose $\alpha$ and $\beta$ with $0<\alpha<\beta<\alpha(1+1/\sigma)$. The admissible ratio $\beta/\alpha$
    will be fixed below.
    For solutions of
    \[
        (\partial_t+v\cdot\nabla_x)f=D_v^\sigma S
    \]
    Proposition~\ref{prop:representation} gives
    \[
        f-T_{K_\tau}f
        =
        T_{\int_0^\tau G_{r,\sigma} \dd r}(S)
        +
        T_{\int_0^\tau G^v_{r,\sigma} \dd r}(D_v^\sigma f).
    \]

    By Lemmas~\ref{lem:young-nl} and~\ref{lem:int-kernels},
    \[
        \norm{ T_{\int_0^\tau G^v_{r,\sigma} \dd r}(D_v^\sigma f)}_{\L^q}
        \lesssim \|D_v^\sigma f\|_{\L^p}
    \]
    whenever
    \[
        \frac1q+1=\frac1{\theta_1}+\frac1p,
        \qquad\text{i.e.}\qquad
        \frac1q+\frac{\sigma(\beta-\alpha)}{\homd}=\frac1p,
    \]
    and similarly
    \[
        \norm{ T_{\int_0^\tau G_{r,\sigma} \dd r}(S)}_{\L^q}
        \lesssim \|S\|_{\L^{p'}}
    \]
    whenever
    \[
        \frac1q+1=\frac1{\theta_2}+\frac1{p'},
        \qquad\text{i.e.}\qquad
        \frac1q+\frac{\alpha-\sigma(\beta-\alpha)}{\homd}=\frac1{p'}.
    \]
    We therefore seek $\beta/\alpha$ so that the two exponent relations give the same value of $q$.
    Writing $\rho:=\beta/\alpha$, this amounts to solving
    \[
        \frac1q+\frac{\sigma(\rho-1)}{(2\rho-1)d+1}=\frac1p,
        \qquad
        \frac1q+\frac{1-\sigma(\rho-1)}{(2\rho-1)d+1}=\frac1{p'}.
    \]
    A direct computation gives
    \begin{equation}
        \label{eq:ratio-beta-alpha}
        \frac\beta\alpha=\rho=
        \frac{d(p-2)+2p\sigma+2}{2(d(p-2)+p\sigma)},
    \end{equation}
    and
    \begin{equation}
        \label{eq:def-q-proof}
        q=\frac{2p(d\sigma+d+\sigma)}{d(p\sigma+2)}.
    \end{equation}
    To identify the admissible range of $p$, we inspect \eqref{eq:ratio-beta-alpha}.
    The denominator is $d(p-2)+p\sigma$, and hence is positive in the range of the theorem.
    Moreover,
    \[
        \frac\beta\alpha-1
        =\frac{2d+2-dp}{2(d(p-2)+p\sigma)},
    \]
    so $\beta>\alpha$ is equivalent to $p<2+\frac2d$.
    Likewise,
    \[
        1-\sigma\Bigl(\frac\beta\alpha-1\Bigr)
        =
        \frac{p(2d+d\sigma+2\sigma)-(4d+2d\sigma+2\sigma)}{2(d(p-2)+p\sigma)}.
    \]
    Thus, $\alpha-\sigma(\beta-\alpha)>0$ is equivalent to
    \[
        p>2-\frac{2\sigma}{2d+d\sigma+2\sigma}.
    \]
    This is the range stated in the theorem.

    With this choice, we have
    \[
        \|f-T_{K_\tau}f\|_{\L^q}
        \lesssim \|D_v^\sigma f\|_{\L^p}+\|S\|_{\L^{p'}}.
    \]

    We next estimate the mollified term.
    Let $\theta\in[1,\infty]$ be defined by
    \[
        \frac1q+1=\frac1\theta+\frac1p.
    \]
    Lemmas~\ref{lem:young-nl} and~\ref{lem:kernel-Kr} yield
    \[
        \|T_{K_\tau}f\|_{\L^q}
        \lesssim \|K_\tau\|_{\L^\theta}\,\|f\|_{\L^p}
        \lesssim \tau^{\homd(\frac1\theta-1)}\|f\|_{\L^p}
        =\tau^{\homd(\frac1q-\frac1p)}\|f\|_{\L^p}.
    \]
    The equivalence
    \[
        q>p
        \iff
        p<2+\frac2d,
    \]
    shows that $q>p$ in the admissible range.  Hence the exponent of $\tau$ is negative; therefore
    \[
        \|T_{K_\tau}f\|_{\L^q}\to0
        \qquad\text{as }\tau\to\infty.
    \]
    Consequently,
    \begin{equation}
        \label{eq:linear-estimate}
        \|f\|_{\L^q}
        \lesssim \|D_v^\sigma f\|_{\L^p}+\|S\|_{\L^{p'}}.
    \end{equation}

    It remains to balance the two terms in \eqref{eq:linear-estimate}.
    Again, if $S = 0$, we have $f = 0$ and there is nothing to prove.
    We may thus assume $S \neq 0$ and $f \neq 0$.
    For $\nu>0$, define
    \[
        f_\nu(t,x,v):=f(\nu t,\nu x,v),
        \qquad
        S_\nu(t,x,v):=\nu S(\nu t,\nu x,v).
    \]
    Then $(\partial_t+v\cdot\nabla_x)f_\nu=D_v^\sigma S_\nu$.
    Applying \eqref{eq:linear-estimate} to $(f_\nu,S_\nu)$ and rescaling yields
    \[
        \|f\|_{\L^q}
        \lesssim \nu^{(d+1)(\frac1q-\frac1p)}\|D_v^\sigma f\|_{\L^p}
        +
        \nu^{1+(d+1)(\frac1q-\frac1{p'})}\|S\|_{\L^{p'}}.
    \]
    Set
    \[
        A:=(d+1)\Bigl(\frac1q-\frac1p\Bigr),
        \qquad
        B:=1+(d+1)\Bigl(\frac1q-\frac1{p'}\Bigr).
    \]
    By \eqref{eq:def-q-proof}, we have
    \[
        A=-\frac{\sigma(d+1)(2d+2-dp)}{2p(d\sigma+d+\sigma)}<0,
        \qquad
        B=\frac{(2d+2-dp)(d\sigma+2d+\sigma)}{2p(d\sigma+d+\sigma)}>0.
    \]
    Hence
    \[
        \|f\|_{\L^q}\lesssim \nu^A\|D_v^\sigma f\|_{\L^p}+\nu^B\|S\|_{\L^{p'}},
        \qquad \nu>0.
    \]
    Optimising in $\nu$ gives
    \[
        \|f\|_{\L^q}
        \lesssim
        \|D_v^\sigma f\|_{\L^p}^{\frac{B}{B-A}}
        \|S\|_{\L^{p'}}^{-\frac{A}{B-A}}.
    \]
    A direct computation yields
    \[
        \frac{B}{B-A}=\frac{d\sigma+2d+\sigma}{2(d\sigma+d+\sigma)},
        \qquad
        -\frac{A}{B-A}=\frac{\sigma(d+1)}{2(d\sigma+d+\sigma)},
    \]
    which is the desired estimate.
    This proves the theorem.
\end{proof}

\appendix
\section{Decay estimates for fractional derivatives and integrals}
We recall two standard consequences of the kernel representations of
the fractional Laplacian and of Riesz potentials; see, for instance,
\cite[Section~3]{MR2944369} and
\cite[Section~1.2.1]{MR3243741}. We include proofs for the reader's convenience.

\begin{lemma}
    \label{lem:family-fractional}
    Let $0<\sigma<1$ and $R>0$.

    \begin{enumerate}[itemsep=0.2cm]
        \item[(i)] If $\mathscr B\subset \C_c^\infty(B_R(0))$ is bounded in $\C^1$, then there exists
              $C=C(d,\sigma,R,\mathscr B)>0$ such that
              \[
                  \abs{[D_v^\sigma \varphi](w)}\le C(1+\abs{w})^{-d-\sigma}
                  \qquad\text{for all }\varphi\in\mathscr B,\ w\in\R^d.
              \]
        \item[(ii)] If, in addition, every $\varphi\in\mathscr B$ has zero average, then there exists a
              constant $C=C(d,\sigma,R,\mathscr B)>0$ such that
              \[
                  \abs{[I_v^\sigma \varphi](w)}\le C(1+\abs{w})^{-d-1+\sigma}
                  \qquad\text{for all }\varphi\in\mathscr B,\ w\in\R^d.
              \]
    \end{enumerate}
\end{lemma}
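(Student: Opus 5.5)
We use the standard kernel representations. For $0<\sigma<1$ and $\varphi\in\C_c^\infty(\R^d)$,
\[
    [D_v^\sigma\varphi](w)=c_{d,\sigma}\,\PV\!\int_{\R^d}\frac{\varphi(w)-\varphi(w+z)}{\abs{z}^{d+\sigma}}\dd z,
\]
and for $d>\sigma$ (always true if $d\ge1$, $\sigma<1$) we have
\[
    [I_v^\sigma\varphi](w)=c'_{d,\sigma}\int_{\R^d}\frac{\varphi(u)}{\abs{w-u}^{d-\sigma}}\dd u,
\]
the latter being consistent with the Fourier definition. In each part we split into $\abs{w}\le 2R$ and $\abs{w}>2R$, and every constant will depend only on $R$, $d$, $\sigma$ and $\sup_{\mathscr B}\|\varphi\|_{\C^1}$, which gives uniformity over $\mathscr B$.

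For (i), take $\abs{w}\le 2R$ first. Symmetrise the integral as $\tfrac12\int(2\varphi(w)-\varphi(w+z)-\varphi(w-z))\abs{z}^{-d-\sigma}\dd z$, or simply split at $\abs{z}=1$. For $\abs{z}\le1$ use the mean value bound $\abs{\varphi(w)-\varphi(w+z)}\le\|\nabla\varphi\|_\infty\abs{z}$, which is integrable against $\abs{z}^{-d-\sigma}$ because $\sigma<1$; here only $\C^1$ control is needed, so the principal value is in fact absolutely convergent. For $\abs{z}>1$ use $2\|\varphi\|_\infty$. This gives a uniform bound, which is the claim since $1+\abs{w}\sim1$. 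For $\abs{w}>2R$ we have $\varphi(w)=0$, so
\[
    D^\sigma\varphi(w)=-c\int_{B_R}\varphi(u)\abs{u-w}^{-d-\sigma}\dd u,
\]
and $\abs{u-w}\ge\abs{w}/2$ yields the bound $C\|\varphi\|_{\L^1}\abs{w}^{-d-\sigma}$.

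For (ii), take $\abs{w}\le 2R$ first. Bound $\abs{I^\sigma\varphi(w)}\le C\|\varphi\|_\infty\int_{B_R}\abs{w-u}^{\sigma-d}\dd u\le C$, since $\abs{u}^{\sigma-d}$ is locally integrable. For $\abs{w}>2R$, the key point is to use the zero average to subtract the kernel at $u=0$:
\[
    I^\sigma\varphi(w)=c'\int_{B_R}\varphi(u)\bigl(\abs{w-u}^{\sigma-d}-\abs{w}^{\sigma-d}\bigr)\dd u.
\]
By the mean value theorem along the segment from $w$ to $w-u$, which stays in $\{\abs{\cdot}\ge\abs{w}/2\}$, the difference is at most $C\abs{u}\abs{w}^{\sigma-d-1}\le CR\abs{w}^{-d-1+\sigma}$. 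Hence $\abs{I^\sigma\varphi(w)}\le C\|\varphi\|_{\L^1}\abs{w}^{-d-1+\sigma}$. Combining both regions gives (ii).

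The main obstacle is the justification in (ii) that the Fourier-multiplier definition of $I_v^\sigma$ on mean-zero functions coincides with the Riesz kernel integral. Since $0<\sigma<1\le d$, $\abs{\xi}^{-\sigma}$ is locally integrable and is the Fourier transform of $c'\abs{w}^{\sigma-d}$ in the sense of tempered distributions, so the convolution formula holds for all Schwartz $\varphi$; the mean-zero hypothesis is used only for the improved decay. Similarly, identifying $D^\sigma=(-\Delta)^{\sigma/2}$ with the singular integral is standard (see \cite[Section~3]{MR2944369}).
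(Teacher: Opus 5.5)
Your proposal is correct and follows essentially the same route as the paper: the singular-integral formula for $D_v^\sigma$ and the Riesz-kernel formula for $I_v^\sigma$, a split into $\abs{w}\le 2R$ and $\abs{w}>2R$, and, for (ii), subtraction of $\abs{w}^{\sigma-d}$ via the zero average followed by the mean value theorem. Your extra remarks on the absolute convergence of the principal value and on the agreement between the Fourier-multiplier and kernel definitions of $I_v^\sigma$ are accurate and only make explicit what the paper takes as standard.
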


\begin{proof}
    For (i), we use the singular integral formula
    \[
        [D_v^\sigma \varphi](w)=c_{d,\sigma}^{\mathrm{Bes}}\,\PV\int_{\R^d}
        \frac{\varphi(w)-\varphi(w+h)}{\abs{h}^{d+\sigma}}\dd h.
    \]
    If $\abs{w}\le 2R$, then the integrand is controlled by the $\C^1$ norm of $\varphi$ for small $h$ and by
    the $\L^\infty$ norm for large $h$, hence $\abs{D_v^\sigma\varphi(w)}\le C$.
    If $\abs{w}>2R$, then $\varphi(w)=0$ and
    \[
        \abs{[D_v^\sigma\varphi](w)}
        \le c_{d,\sigma}^{\mathrm{Bes}}\,\int_{B_R(0)} \frac{\abs{\varphi(z)}}{\abs{w-z}^{d+\sigma}}\dd z
        \lesssim \abs{w}^{-d-\sigma}.
    \]
    This proves (i).

    For (ii), we write
    \[
        [I_v^\sigma\varphi](w)=c_{d,\sigma}^{\mathrm{Rie}}\,\int_{\R^d}\abs{w-z}^{\sigma-d}\varphi(z)\dd z.
    \]
    If $\abs{w}\le 2R$, the local integrability of $\abs{w-z}^{\sigma-d}$ and the uniform $\L^\infty$ bound on
    $\varphi$ imply $\abs{[I_v^\sigma\varphi](w)}\le C$.
    If $\abs{w}>2R$, the zero-average condition gives
    \[
        [I_v^\sigma\varphi](w)=c_{d,\sigma}^{\mathrm{Rie}}\,
        \int_{B_R(0)}
        \bigl(\abs{w-z}^{\sigma-d}-\abs{w}^{\sigma-d}\bigr)\varphi(z)\dd z.
    \]
    Since $z\in B_R(0)$ and $\abs{w}>2R$, the mean value theorem yields
    \[
        \abs{\abs{w-z}^{\sigma-d}-\abs{w}^{\sigma-d}}\lesssim \abs{z}\,\abs{w}^{\sigma-d-1}\lesssim
        \abs{w}^{\sigma-d-1}.
    \]
    Therefore $\abs{[I_v^\sigma\varphi](w)}\lesssim \abs{w}^{-d-1+\sigma}$.
\end{proof}

The following estimate is standard. It follows from the kernel representations
of Riesz potentials and Riesz transforms; see, for instance,
\cite[Ch.~V, Sec.~1 and Ch.~III, Sec.~1]{MR290095}
or \cite[Sec.~1.2.1]{MR3243741}.

\begin{lemma} \label{lem:potential-divergence-decay}
    Let $d\ge 1$, $0<\sigma<1$, and let $N>d+2$ be an integer.
    Let $R\ge1$, and suppose
    \[
        \Theta\in \C_c^N(\R^d;\R^d),
        \qquad
        \supp\Theta\subset B_R,
    \]
    with
    \[
        M_N(\Theta):=
        \sum_{j=1}^d\sum_{\abs{\alpha}\le N}
        \|\partial^\alpha \Theta_j\|_{\L^\infty(\R^d)}
        <\infty .
    \]
    Define
    \[
        U=(-\Delta)^{-\sigma}\nabla\cdot\Theta
    \]
    by the Fourier multiplier formula
    \[
        \widehat U(\xi)
        =
        \abs{\xi}^{-2\sigma} i\xi\cdot \widehat\Theta(\xi).
    \]
    Then $U\in \C^1(\R^d)$, and there is a constant
    $C=C(d,\sigma,N,R)$ such that, for every $w\in\R^d$,
    \[
        \abs{U(w)}
        \le
        C M_N(\Theta)(1+\abs{w})^{-d-1+2\sigma},
    \]
    and
    \[
        \abs{\nabla U(w)}
        \le
        C M_N(\Theta)(1+\abs{w})^{-d-2+2\sigma}.
    \]
    In particular, the estimates are uniform for any family of vector fields
    $\Theta$ supported in $B_R$ and bounded in $\C^N$.
\end{lemma}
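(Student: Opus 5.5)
The plan is to work entirely on the Fourier side and split $U$ into a near-field part and a far-field part. The approach rests on two observations. First, $U=\nabla\cdot\bigl((-\Delta)^{-\sigma}\Theta\bigr)$ componentwise, since the multipliers commute. Second, for $0<2\sigma<d$ the operator $(-\Delta)^{-\sigma}$ is convolution with $c\abs{z}^{2\sigma-d}$. Because the borderline cases $2\sigma\ge d$ (only possible when $d=1$) have to be handled too, I would avoid relying on that kernel. Instead I would use a dyadic Littlewood--Paley decomposition, or equivalently the identity
\[
\abs{\xi}^{-2\sigma}=\frac{1}{\Gamma(\sigma)}\int_0^\infty t^{\sigma-1}e^{-t\abs{\xi}^2}\dd t,
\]
which gives, in the sense of distributions,
\[
U=\frac1{\Gamma(\sigma)}\int_0^\infty t^{\sigma-1}\,\nabla\cdot\bigl(p_t*\Theta\bigr)\dd t,
\qquad p_t(z)=(4\pi t)^{-d/2}e^{-\abs{z}^2/4t}.
\]
The point of this representation is that the convergence near $t=\infty$ is supplied by the divergence, i.e.\ by the factor $\xi$ in the multiplier. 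This is the same mean-zero cancellation that makes the multiplier locally integrable at the origin.

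\textbf{Near field and smoothness} ($\abs{w}\le 2R$). Here I would estimate $U$ and $\nabla U$ directly from the Fourier integral
\[
\abs{U(w)}+\abs{\nabla U(w)}\le \int\abs{\xi}^{1-2\sigma}(1+\abs{\xi})\abs{\widehat\Theta(\xi)}\dd\xi .
\]
The integrand is integrable near $0$ because $1-2\sigma>-d$ and $\abs{\widehat\Theta}\le CR^dM_0$. At infinity, $\abs{\widehat\Theta(\xi)}\lesssim R^d M_N(1+\abs{\xi})^{-N}$ with $N>d+2$. This bounds both quantities by $CM_N$, and the same dominated-convergence argument shows $U\in\C^1$.

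\textbf{Far field} ($\abs{w}>2R$). In the heat representation I would split the $t$-integral at $t=\abs{w}^2$. For $t\le\abs{w}^2$, write
\[
\nabla\cdot(p_t*\Theta)(w)=\int\nabla p_t(w-z)\cdot\Theta(z)\dd z.
\]
Since $\abs{w-z}\ge\abs{w}/2$, the Gaussian gives $\abs{\nabla p_t(w-z)}\lesssim t^{-(d+1)/2}e^{-c\abs{w}^2/t}$. Integrating $t^{\sigma-1}$ against this over $t\le\abs{w}^2$ gives $\lesssim\abs{w}^{2\sigma-d-1}R^dM_0$. For $t>\abs{w}^2$, use $\abs{\nabla p_t}\lesssim t^{-(d+1)/2}$ uniformly; then
\[
\int_{\abs{w}^2}^\infty t^{\sigma-1-(d+1)/2}\dd t\sim\abs{w}^{2\sigma-d-1},
\]
which converges since $2\sigma<d+1$. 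The bound for $\nabla U$ is identical with $\nabla^2p_t$, yielding the exponent $-d-2+2\sigma$; this requires $2\sigma<d+2$, which always holds. Combining the two regimes gives both decay bounds with constant $CM_N(\Theta)$.

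\textbf{Main obstacle.} The delicate step is justifying the heat-semigroup representation as a pointwise identity, including interchanging the $t$-integral with the Fourier inversion, in the case $2\sigma\ge d$. There the heat representation of $(-\Delta)^{-\sigma}\Theta$ itself diverges at $t=\infty$, and only the divergence form converges. I would verify the identity on the Fourier side by Fubini. The integral
\[
\int_0^\infty t^{\sigma-1}e^{-t\abs{\xi}^2}\,\abs{\xi}\,\abs{\widehat\Theta(\xi)}\dd t=\Gamma(\sigma)\abs{\xi}^{1-2\sigma}\abs{\widehat\Theta(\xi)}
\]
is integrable in $\xi$ by the near-field computation, so Fubini applies. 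Uniformity over families then follows because every constant depends only on $d,\sigma,N,R$ and $M_N(\Theta)$.
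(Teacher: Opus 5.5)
Your proof is correct. Your near-field step is exactly the paper's local estimate: the Fourier decay of $\widehat\Theta$ together with integrability of $\abs{\xi}^{1-2\sigma}(1+\abs{\xi})\abs{\widehat\Theta(\xi)}$, which also gives $U\in\C^1$.

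The far field is where you genuinely differ. The paper writes $U=\sum_j K_j*\Theta_j$ with $K_j=\mathcal F^{-1}(i\xi_j\abs{\xi}^{-2\sigma})$. It then invokes the general fact that this kernel is smooth away from the origin and homogeneous of degree $-d-1+2\sigma$ (explicitly $c\,z_j\abs{z}^{2\sigma-d-2}$), and estimates the convolution integral directly for $\abs{w}\ge 2R$. You instead use heat subordination and split the $t$-integral at $t=\abs{w}^2$.

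The paper's route is a one-line estimate once the theory of homogeneous distributions is taken for granted. However, it leaves two points implicit:
\begin{enumerate}
\item why the distributional convolution agrees with the absolutely convergent integral off $\supp\Theta$;
\item how $K_j$ is to be read when it is not locally integrable, namely for $\sigma\le\frac12$, where it is a principal-value kernel.
\end{enumerate}

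Your route replaces both points with a single Fubini argument on the Fourier side. That argument needs only the integrability of $\abs{\xi}^{1-2\sigma}\abs{\widehat\Theta}$, which you already established for the near field. It also works uniformly for all $d\ge1$ and $0<\sigma<1$, including $2\sigma\ge d$. In that regime, as you observe, the heat representation of $(-\Delta)^{-\sigma}\Theta$ itself diverges, and only the divergence form converges thanks to the extra factor $t^{-1/2}$.

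In fact, away from the origin, $\Gamma(\sigma)^{-1}\int_0^\infty t^{\sigma-1}\partial_j p_t\dd t$ is precisely the paper's kernel $K_j$. Your argument can therefore be read as a self-contained derivation of the paper's kernel representation together with its decay.
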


\begin{proof}
    We split the proof into a local estimate and a far-field estimate.

    First, since $\Theta$ is supported in $B_R$, integration by parts gives the
    standard Fourier decay bound
    \[
        \abs{\widehat\Theta(\xi)}
        \le
        C_{d,N,R} M_N(\Theta)(1+\abs{\xi})^{-N}.
    \]
    Indeed, for $\abs{\xi}\le1$ this follows from
    $\|\Theta\|_{\L^1}\le C_{d,R}M_N(\Theta)$, while for $\abs{\xi}>1$ one
    integrates by parts $N$ times in a coordinate direction satisfying
    $\abs{\xi_k}\ge \abs{\xi}/\sqrt d$.

    Therefore
    \[
        \abs{U(w)}
        \le
        C M_N(\Theta)
        \int_{\R^d}
        \abs{\xi}^{1-2\sigma}(1+\abs{\xi})^{-N}\dd \xi .
    \]
    The integral is finite. Near $\xi=0$, its radial exponent is
    \[
        d-1+1-2\sigma=d-2\sigma>-1,
    \]
    because $d\ge1$ and $0<\sigma<1$. At infinity it is finite because
    $N>d+2$. Hence
    \[
        \|U\|_{\L^\infty}
        \le
        C M_N(\Theta).
    \]

    Similarly,
    \[
        \widehat{\partial_\ell U}(\xi)
        =
        i\xi_\ell \abs{\xi}^{-2\sigma} i\xi\cdot\widehat\Theta(\xi),
    \]
    so
    \[
        \abs{\nabla U(w)}
        \le
        C M_N(\Theta)
        \int_{\R^d}
        \abs{\xi}^{2-2\sigma}(1+\abs{\xi})^{-N}\dd \xi .
    \]
    This integral is also finite: near zero the radial exponent is
    \[
        d-1+2-2\sigma=d+1-2\sigma>-1,
    \]
    and at infinity we again use $N>d+2$. Thus
    \[
        \|\nabla U\|_{\L^\infty}
        \le
        C M_N(\Theta).
    \]
    This proves the desired estimates on bounded sets of $w$, after possibly
    increasing the constant.

    It remains to prove the decay for large $\abs{w}$. Let
    \[
        K_j=\cF^{-1}\bigl(i\xi_j\abs{\xi}^{-2\sigma}\bigr).
    \]
    The multiplier $i\xi_j\abs{\xi}^{-2\sigma}$ is smooth away from the origin and
    homogeneous of degree $1-2\sigma$. Hence $K_j$ is smooth away from the
    origin and homogeneous of degree $-d-(1-2\sigma)=-d-1+2\sigma$.
    Equivalently, away from the origin one has the explicit form
    \[
        K_j(z)=c_{d,\sigma}\frac{z_j}{\abs{z}^{d+2-2\sigma}},
    \]
    with the usual logarithmic interpretation in the borderline potential case.
    Consequently,
    \[
        \abs{K_j(z)}
        \le
        C\abs{z}^{-d-1+2\sigma},
        \qquad z\ne0,
    \]
    and
    \[
        \abs{\nabla K_j(z)}
        \le
        C\abs{z}^{-d-2+2\sigma},
        \qquad z\ne0.
    \]

    Since
    \[
        \widehat U(\xi)
        =
        \sum_{j=1}^d i\xi_j\abs{\xi}^{-2\sigma}\widehat\Theta_j(\xi),
    \]
    we have, in the sense of distributions,
    \[
        U=\sum_{j=1}^d K_j*\Theta_j.
    \]
    If $\abs{w}\ge 2R$, then for every $z\in\supp\Theta\subset B_R$,
    $\abs{w-z}\ge \abs{w}-\abs{z}\ge \frac{\abs{w}}{2}$.
    Therefore the convolution is an absolutely convergent ordinary integral for
    such $w$, and
    \[
        \begin{aligned}
            \abs{U(w)}
             & \le
            \sum_{j=1}^d
            \int_{B_R}\abs{K_j(w-z)}\,\abs{\Theta_j(z)}\dd z \\
             & \le
            C M_N(\Theta)
            \int_{B_R}\abs{w-z}^{-d-1+2\sigma}\dd z          \\
             & \le
            C M_N(\Theta)\abs{w}^{-d-1+2\sigma}.
        \end{aligned}
    \]
    Likewise,
    \[
        \begin{aligned}
            \abs{\nabla U(w)}
             & \le
            \sum_{j=1}^d
            \int_{B_R}\abs{\nabla K_j(w-z)}\,\abs{\Theta_j(z)}\dd z \\
             & \le
            C M_N(\Theta)
            \int_{B_R}\abs{w-z}^{-d-2+2\sigma}\dd z                 \\
             & \le
            C M_N(\Theta)\abs{w}^{-d-2+2\sigma}.
        \end{aligned}
    \]

    Since $R\ge1$, the powers of $\abs{w}$ are comparable to the corresponding
    powers of $1+\abs{w}$ on the region $\abs{w}\ge2R$. On the complementary region
    $\abs{w}\le2R$, the already proved uniform bounds imply
    \[
        \abs{U(w)}
        \le
        C M_N(\Theta)
        \le
        C M_N(\Theta)(1+\abs{w})^{-d-1+2\sigma},
    \]
    after increasing $C$. The same argument gives
    \[
        \abs{\nabla U(w)}
        \le
        C M_N(\Theta)(1+\abs{w})^{-d-2+2\sigma}.
    \]
    Combining the local and far-field estimates proves the lemma.
\end{proof}

\section{Critical integration}
To estimate the integrated kernels, we first explain how to carry out the critical integration by real
interpolation. This is a useful technical tool, which may be of independent interest. We believe it should be
known, but we could not find a reference, so we provide a proof for the convenience of the reader.

\begin{lemma}
    \label{lem:critical-integration-interpolation}
    Let $N\in\N$, and let $J:(0,\infty) \times \R^N \to \R$, $(r,x) \mapsto J_r(x)$, be jointly measurable
    with $J_r\in \L^1(\R^N)\cap \L^\infty(\R^N)$ for
    every $r>0$. Assume that for some $A>0$ and $\mu,\nu>0$,
    \[
        \|J_r\|_{\L^1(\R^N)}
        \le A\,r^{-1+\mu},
        \qquad
        \|J_r\|_{\L^\infty(\R^N)}
        \le A\,r^{-1-\nu}
        \qquad\text{for all }r>0.
    \]
    Then, uniformly in $\tau>0$,
    \[
        \left\|\int_0^\tau J_r\dd r\right\|_{\L^{\theta,\infty}(\R^N)}
        \lesssim A,
        \qquad
        \theta:=\frac{\mu+\nu}{\nu}.
    \]
\end{lemma}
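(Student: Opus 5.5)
The plan is to bound the distribution function of $G_\tau:=\int_0^\tau J_r\dd r$ directly, splitting the $r$-integral at a level-dependent threshold. This is the standard layer-cake argument behind the Marcinkiewicz interpolation theorem. By homogeneity (replace $J_r$ by $J_r/A$) we may assume $A=1$. We may also extend the integral to $(0,\infty)$ after replacing $J_r$ by $J_r\one_{\{r<\tau\}}$: the hypotheses persist with the same constants, so uniformity in $\tau$ is automatic. Fix a level $\lambda>0$ and a splitting scale $\rho>0$, to be chosen depending on $\lambda$. Write
\[
    G_\tau = \int_0^{\min\{\rho,\tau\}} J_r\dd r + \int_{\min\{\rho,\tau\}}^\tau J_r\dd r =: G^{\le} + G^{>}.
\]

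For the small scales we use the $\L^1$ bound. Since $\mu>0$, Minkowski's inequality gives
\[
    \|G^{\le}\|_{\L^1}\le \int_0^\rho r^{-1+\mu}\dd r = \rho^\mu/\mu .
\]
For the large scales we use the $\L^\infty$ bound. Since $\nu>0$,
\[
    \|G^{>}\|_{\L^\infty}\le \int_\rho^\infty r^{-1-\nu}\dd r = \rho^{-\nu}/\nu .
\]
Choose $\rho$ so that $\rho^{-\nu}/\nu=\lambda/2$, that is, $\rho=(2/(\nu\lambda))^{1/\nu}$. Then $\abs{G^{>}}\le\lambda/2$ everywhere, so $\{\abs{G_\tau}>\lambda\}\subset\{\abs{G^{\le}}>\lambda/2\}$. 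Chebyshev's inequality then yields
\[
    \abs{\{\abs{G_\tau}>\lambda\}}\le \frac{2}{\lambda}\|G^{\le}\|_{\L^1}\lesssim \lambda^{-1}\rho^{\mu}\sim \lambda^{-1-\mu/\nu}=\lambda^{-\theta},
\]
with $\theta=(\mu+\nu)/\nu$. The implicit constants depend only on $\mu,\nu$.

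Restoring $A$ by homogeneity, this gives $\sup_\lambda \lambda\,\abs{\{\abs{G_\tau}>\lambda\}}^{1/\theta}\lesssim A$ uniformly in $\tau$, which is the claim. The only point needing care, and the expected (minor) obstacle, is measurability and justification of Minkowski's inequality for the Bochner-type integral $\int J_r\dd r$. Joint measurability together with Tonelli applied to $\abs{J_r(x)}$ makes $G^{\le}(x)$ well defined almost everywhere with the stated $\L^1$ bound. The $\L^\infty$ bound for $G^{>}$ holds pointwise almost everywhere by Fubini, applied to the set where $\abs{J_r(x)}>r^{-1-\nu}$, which has $\dd r\otimes\dd x$-measure zero. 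Also, $G_\tau$ itself is finite almost everywhere, since its two pieces are.
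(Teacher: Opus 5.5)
Your proof is correct, but it reaches the conclusion by a different route than the paper. The paper works with the real-interpolation $K$-functional. It splits $\int_0^\tau J_r\dd r$ at $\delta=\min\{\tau,s^{1/(\mu+\nu)}\}$ and obtains $K(s,\int_0^\tau J_r\dd r;\L^1,\L^\infty)\lesssim A\,s^{\mu/(\mu+\nu)}$ for all $s>0$. It then invokes the identification $(\L^1,\L^\infty)_{\mu/(\mu+\nu),\infty}=\L^{\theta,\infty}$.

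You use the same kind of scale splitting, but you tie the threshold $\rho$ to the level of the distribution function rather than to the interpolation parameter $s$. You then replace the interpolation identity by a single Chebyshev step, after absorbing the large-scale piece with the $\L^\infty$ bound.

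Your version is self-contained: it needs no interpolation theory, and the constants are explicit in $\mu,\nu$. You also take care of the measurability points (Tonelli for the $\L^1$ piece, Fubini on the exceptional set for the $\L^\infty$ piece), which the paper passes over.

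The paper's formulation is more abstract. The same $K$-functional estimate works verbatim for any compatible couple $(X_0,X_1)$ with $\|J_r\|_{X_0}\lesssim r^{-1+\mu}$ and $\|J_r\|_{X_1}\lesssim r^{-1-\nu}$, giving a bound in $(X_0,X_1)_{\mu/(\mu+\nu),\infty}$. The cost is citing the identification of that space in the Lebesgue case.

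Your treatment of uniformity in $\tau$, via the truncation $J_r\one_{\{r<\tau\}}$, is equivalent to the paper's choice of $\delta$ as a minimum with $\tau$.
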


\begin{proof}
    Set
    \[
        F_\tau(x):=\int_0^\tau J_r(x)\dd r.
    \]
    Since $\mu>0$, the integral is absolutely convergent in $\L^1$.

    Let $K(s,\cdot;\L^1,\L^\infty)$ denote the real interpolation
    $K$-functional; see \cite{MR482275}. For every $s>0$ and every $\delta\in(0,\tau]$,
    we split
    \[
        F_\tau
        =
        \int_0^\delta J_r\dd r
        +
        \int_\delta^\tau J_r\dd r,
    \]
    and therefore
    \begin{align*}
        K(s,F_\tau;\L^1,\L^\infty)
         & \le
        \left\|\int_0^\delta J_r\dd r\right\|_{\L^1}
        +
        s\left\|\int_\delta^\tau J_r\dd r\right\|_{\L^\infty}
        \\
         & \le
        \int_0^\delta \|J_r\|_{\L^1}\dd r
        +
        s\int_\delta^\tau \|J_r\|_{\L^\infty}\dd r
        \\
         & \le
        A\bigl(\delta^\mu+s\,\delta^{-\nu}\bigr).
    \end{align*}
    Choose
    \[
        \delta:=\min\{\tau,s^{1/(\mu+\nu)}\}.
    \]
    If $\delta=s^{1/(\mu+\nu)}$, then
    \[
        K(s,F_\tau;\L^1,\L^\infty)
        \lesssim A\,s^\lambda,
        \qquad
        \lambda:=\frac{\mu}{\mu+\nu}.
    \]
    If $\delta=\tau$, then the second integral vanishes and $s\ge \tau^{\mu+\nu}$, hence
    \[
        K(s,F_\tau;\L^1,\L^\infty)
        \lesssim A\,\tau^\mu
        \le A\,s^\lambda.
    \]
    Thus
    \[
        K(s,F_\tau;\L^1,\L^\infty)\lesssim A\,s^\lambda
        \qquad\text{for all }s>0.
    \]
    This means that
    \[
        F_\tau\in (\L^1,\L^\infty)_{\lambda,\infty}
        \qquad\text{and}\qquad
        \|F_\tau\|_{(\L^1,\L^\infty)_{\lambda,\infty}}
        \lesssim A.
    \]
    By the standard real interpolation identity,
    \[
        (\L^1,\L^\infty)_{\lambda,\infty}
        =
        \L^{\theta,\infty},
        \qquad
        \frac1\theta=1-\lambda=\frac{\nu}{\mu+\nu}.
    \]
    This proves the lemma.
\end{proof}

\bibliographystyle{plain}

\begin{thebibliography}{10}

    \bibitem{anceschi2025poincareinequalityquantitativegiorgi}
    Francesca Anceschi, Helge Dietert, Jessica Guerand, Amélie Loher, Clément Mouhot, and Annalaura Rebucci.
    \newblock Poincar\'e inequality and quantitative {De Giorgi} method for hypoelliptic operators, 2024.
    \newblock arXiv:2401.12194.

    \bibitem{MR4950599}
    Francesca Anceschi and Mirco Piccinini.
    \newblock Boundedness estimates for nonlinear nonlocal kinetic {K}olmogorov-{F}okker-{P}lanck equations.
    \newblock {\em NoDEA Nonlinear Differential Equations Appl.}, 32(6):Paper No. 121, 25, 2025.

    \bibitem{MR4898687}
    Pascal Auscher, Cyril Imbert, and Lukas Niebel.
    \newblock Fundamental solutions to {K}olmogorov-{F}okker-{P}lanck equations with rough coefficients: existence, uniqueness, upper estimates.
    \newblock {\em SIAM J. Math. Anal.}, 57(2):2114--2137, 2025.

    \bibitem{auscher2025weaksolutionskolmogorovfokkerplanckequations}
    Pascal Auscher, Cyril Imbert, and Lukas Niebel.
    \newblock Weak solutions to {K}olmogorov-{F}okker-{P}lanck equations: regularity, existence and uniqueness, 2025.
    \newblock arXiv:2403.17464.

    \bibitem{auscher2026kineticsobolevspaces}
    Pascal Auscher and Lukas Niebel.
    \newblock Kinetic {S}obolev {S}paces, 2026.
    \newblock arXiv:2603.17491.

    \bibitem{MR482275}
    J\"oran Bergh and J\"orgen L\"ofstr\"om.
    \newblock {\em Interpolation spaces. {A}n introduction}, volume No. 223 of {\em Grundlehren der Mathematischen Wissenschaften}.
    \newblock Springer-Verlag, Berlin-New York, 1976.

    \bibitem{MR4964333}
    Anup Biswas and Erwin Topp.
    \newblock Lipschitz regularity of fractional {$p$}-{L}aplacian.
    \newblock {\em Ann. PDE}, 11(2):Paper No. 27, 43, 2025.

    \bibitem{MR1949176}
    F.~Bouchut.
    \newblock Hypoelliptic regularity in kinetic equations.
    \newblock {\em J. Math. Pures Appl. (9)}, 81(11):1135--1159, 2002.

    \bibitem{MR4942309}
    Sun-Sig Byun and Kyeongbae Kim.
    \newblock {$L^q$} estimates for nonlocal {$p$}-{L}aplacian-type equations with {BMO} kernel coefficients in divergence form.
    \newblock {\em Commun. Contemp. Math.}, 27(9):Paper No. 2550012, 78, 2025.

    \bibitem{MR4303657}
    F\'elix del Teso, David G\'omez-Castro, and Juan~Luis V\'azquez.
    \newblock Three representations of the fractional {$p$}-{L}aplacian: semigroup, extension and {B}alakrishnan formulas.
    \newblock {\em Fract. Calc. Appl. Anal.}, 24(4):966--1002, 2021.

    \bibitem{MR3237774}
    Agnese Di~Castro, Tuomo Kuusi, and Giampiero Palatucci.
    \newblock Nonlocal {H}arnack inequalities.
    \newblock {\em J. Funct. Anal.}, 267(6):1807--1836, 2014.

    \bibitem{MR3542614}
    Agnese Di~Castro, Tuomo Kuusi, and Giampiero Palatucci.
    \newblock Local behavior of fractional {$p$}-minimizers.
    \newblock {\em Ann. Inst. H. Poincar\'e{} C Anal. Non Lin\'eaire}, 33(5):1279--1299, 2016.

    \bibitem{MR2944369}
    Eleonora Di~Nezza, Giampiero Palatucci, and Enrico Valdinoci.
    \newblock Hitchhiker's guide to the fractional {S}obolev spaces.
    \newblock {\em Bull. Sci. Math.}, 136(5):521--573, 2012.

    \bibitem{dietert2025criticaltrajectorieskineticgeometry}
    Helge Dietert, Clément Mouhot, Lukas Niebel, and Rico Zacher.
    \newblock Critical trajectories in kinetic geometry, 2025.
    \newblock arXiv:2508.14868.

    \bibitem{dietert2025nashsgboundkolmogorov}
    Helge Dietert and Lukas Niebel.
    \newblock Nash's ${G}$ bound for the {K}olmogorov equation, 2025.
    \newblock arXiv:2510.21621.

    \bibitem{dnz_kinpLaplace_2026}
    Helge Dietert, Lukas Niebel, and Rico Zacher.
    \newblock Nonlinear kinetic diffusion equations with $p$-growth, 2026.
    \newblock arXiv:2605.18521.

    \bibitem{MR4204564}
    Mengyao Ding, Chao Zhang, and Shulin Zhou.
    \newblock Local boundedness and {H}\"older continuity for the parabolic fractional {$p$}-{L}aplace equations.
    \newblock {\em Calc. Var. Partial Differential Equations}, 60(1):Paper No. 38, 45, 2021.

    \bibitem{MR3023366}
    Qiang Du, Max Gunzburger, Richard~B. Lehoucq, and Kun Zhou.
    \newblock Analysis and approximation of nonlocal diffusion problems with volume constraints.
    \newblock {\em SIAM Rev.}, 54(4):667--696, 2012.

    \bibitem{MR4468371}
    Prashanta Garain and Kaj Nystr\"om.
    \newblock On regularity and existence of weak solutions to nonlinear {K}olmogorov-{F}okker-{P}lanck type equations with rough coefficients.
    \newblock {\em Math. Eng.}, 5(2):Paper No. 043, 37, 2023.

    \bibitem{giovagnoli2025c1alpharegularityfractionalpharmonic}
    Davide Giovagnoli, David Jesus, and Luis Silvestre.
    \newblock ${C}^{1+\alpha}$ regularity for fractional $p$-harmonic functions, 2025.
    \newblock arXiv:2509.26565.

    \bibitem{MR3923847}
    Fran\c{c}ois Golse, Cyril Imbert, Cl\'ement Mouhot, and Alexis~F. Vasseur.
    \newblock Harnack inequality for kinetic {F}okker-{P}lanck equations with rough coefficients and application to the {L}andau equation.
    \newblock {\em Ann. Sc. Norm. Super. Pisa Cl. Sci. (5)}, 19(1):253--295, 2019.

    \bibitem{MR2445437}
    Loukas Grafakos.
    \newblock {\em Classical {F}ourier analysis}, volume 249 of {\em Graduate Texts in Mathematics}.
    \newblock Springer, New York, second edition, 2008.

    \bibitem{MR3243741}
    Loukas Grafakos.
    \newblock {\em Modern {F}ourier analysis}, volume 250 of {\em Graduate Texts in Mathematics}.
    \newblock Springer, New York, third edition, 2014.

    \bibitem{MR2728700}
    Max Gunzburger and Richard~B. Lehoucq.
    \newblock A nonlocal vector calculus with application to nonlocal boundary value problems.
    \newblock {\em Multiscale Model. Simul.}, 8(5):1581--1598, 2010.

    \bibitem{MR4711580}
    Solveig Hepp and Moritz Kassmann.
    \newblock The divergence theorem and nonlocal counterparts.
    \newblock {\em Bull. Lond. Math. Soc.}, 56(2):711--733, 2024.

    \bibitem{MR3593528}
    Antonio Iannizzotto, Sunra Mosconi, and Marco Squassina.
    \newblock Global {H}\"older regularity for the fractional {$p$}-{L}aplacian.
    \newblock {\em Rev. Mat. Iberoam.}, 32(4):1353--1392, 2016.

    \bibitem{MR4049224}
    Cyril Imbert and Luis Silvestre.
    \newblock The weak {H}arnack inequality for the {B}oltzmann equation without cut-off.
    \newblock {\em J. Eur. Math. Soc. (JEMS)}, 22(2):507--592, 2020.

    \bibitem{MR454618}
    Bj\"orn Jawerth.
    \newblock Some observations on {B}esov and {L}izorkin-{T}riebel spaces.
    \newblock {\em Math. Scand.}, 40(1):94--104, 1977.

    \bibitem{MR4828346}
    Moritz Kassmann and Marvin Weidner.
    \newblock The {H}arnack inequality fails for nonlocal kinetic equations.
    \newblock {\em Adv. Math.}, 459:Paper No. 110030, 14, 2024.

    \bibitem{MR4030247}
    Janne Korvenp\"a\"a, Tuomo Kuusi, and Erik Lindgren.
    \newblock Equivalence of solutions to fractional {$p$}-{L}aplace type equations.
    \newblock {\em J. Math. Pures Appl. (9)}, 132:1--26, 2019.

    \bibitem{MR4679981}
    Naian Liao.
    \newblock H\"older regularity for parabolic fractional {$p$}-{L}aplacian.
    \newblock {\em Calc. Var. Partial Differential Equations}, 63(1):Paper No. 22, 34, 2024.

    \bibitem{MR4688651}
    Am\'elie Loher.
    \newblock Quantitative {D}e {G}iorgi methods in kinetic theory for non-local operators.
    \newblock {\em J. Funct. Anal.}, 286(6):Paper No. 110312, 67, 2024.

    \bibitem{MR3491533}
    Jos\'e{}~M. Maz\'on, Julio~D. Rossi, and Juli\'an Toledo.
    \newblock Fractional {$p$}-{L}aplacian evolution equations.
    \newblock {\em J. Math. Pures Appl. (9)}, 105(6):810--844, 2016.

    \bibitem{n_kinetictor_2026}
    Lukas Niebel.
    \newblock A new proof of the transfer of regularity for kinetic equations, 2026.
    \newblock arXiv:2605.13582.

    \bibitem{MR4350284}
    Lukas Niebel and Rico Zacher.
    \newblock Kinetic maximal {$L^2$}-regularity for the (fractional) {K}olmogorov equation.
    \newblock {\em J. Evol. Equ.}, 21(3):3585--3612, 2021.

    \bibitem{MR4336467}
    Lukas Niebel and Rico Zacher.
    \newblock Kinetic maximal {$L^p$}-regularity with temporal weights and application to quasilinear kinetic diffusion equations.
    \newblock {\em J. Differential Equations}, 307:29--82, 2022.

    \bibitem{MR4875497}
    Lukas Niebel and Rico Zacher.
    \newblock On a kinetic {P}oincar\'e{} inequality and beyond.
    \newblock {\em J. Funct. Anal.}, 289(1):Paper No. 110899, 18, 2025.

    \bibitem{MR2068847}
    Andrea Pascucci and Sergio Polidoro.
    \newblock The {M}oser's iterative method for a class of ultraparabolic equations.
    \newblock {\em Commun. Contemp. Math.}, 6(3):395--417, 2004.

    \bibitem{MR3714833}
    Armin Schikorra, Tien-Tsan Shieh, and Daniel~E. Spector.
    \newblock Regularity for a fractional {$p$}-{L}aplace equation.
    \newblock {\em Commun. Contemp. Math.}, 20(1):1750003, 6, 2018.

    \bibitem{MR3403430}
    Tien-Tsan Shieh and Daniel~E. Spector.
    \newblock On a new class of fractional partial differential equations.
    \newblock {\em Adv. Calc. Var.}, 8(4):321--336, 2015.

    \bibitem{MR4807233}
    Luis Silvestre.
    \newblock Regularity estimates and open problems in kinetic equations.
    \newblock In {\em {{$\rm A^3N^2M$}}: approximation, applications, and analysis of nonlocal, nonlinear models}, volume 165 of {\em IMA Vol. Math. Appl.}, pages 101--148. Springer, Cham, [2023] \copyright 2023.

    \bibitem{MR290095}
    Elias~M. Stein.
    \newblock {\em Singular integrals and differentiability properties of functions}, volume No. 30 of {\em Princeton Mathematical Series}.
    \newblock Princeton University Press, Princeton, NJ, 1970.

    \bibitem{MR4039522}
    Logan~F. Stokols.
    \newblock H\"older continuity for a family of nonlocal hypoelliptic kinetic equations.
    \newblock {\em SIAM J. Math. Anal.}, 51(6):4815--4847, 2019.

\end{thebibliography}

\end{document}